\theoremstyle{nonumberplain}
\theoremstyle{plain}
\newtheorem{lemma}{Lemma}
\newtheorem{corollary}[lemma]{Corollary}
\newtheorem{theorem}{Theorem}
\theoremstyle{nonumberplain}
\newtheorem{proof}{Proof}
\newcommand{\N}{\mathbb{N}}
\newcommand{\R}{\mathbb{R}}
\newcommand{\C}{\mathbb{C}}
\newcommand{\D}{\mathbb{D}}
\newcommand{\coloneqq}{\mathrel{\mathop:}=}
\newcommand{\eqqcolon}{=\mathrel{\mathop:}}
\providecommand{\abs}[1]{\lvert#1\rvert}
\providecommand{\gabs}[1]{\big\lvert#1\big\rvert}
\providecommand{\Gabs}[1]{\Big\lvert#1\Big\rvert}
\providecommand{\norm}[1]{\lVert#1\rVert}
\DeclareMathOperator{\dist}{dist}
\DeclareMathOperator{\vol}{vol}
\newcommand{\sL}{\mathscr{L}}
\providecommand{\Bbarn}{\overline{B}^{\begin{minipage}{1ex} $\scriptstyle \vspace{-1.2ex} n$ \end{minipage}}}
\providecommand{\Bbarnn}{\overline{B}^{\begin{minipage}{3.4ex} $\scriptstyle \vspace{-1.2ex} n-1$ \end{minipage}}}
\begin{document}
%
%
\title{Wermer type sets and extension of CR functions}
\author{Tobias Harz, Nikolay Shcherbina and Giuseppe Tomassini $^*$}
\maketitle
%
\small\noindent{{\bf Abstract.} For each $n\geq2$ we construct an unbounded closed pseudoconcave complete pluripolar set $\mathcal E$ in $\mathbb C^n$ which contains no analytic variety of positive dimension (we call it a \textit{Wermer type set}). We also construct an unbounded strictly pseudoconvex domain $\Omega$ in $\mathbb C^n$ and a smooth $CR$ function $f$ on $\partial\Omega$ which has a single-valued holomorphic extension exactly to the set $\overline\Omega\setminus\mathcal E$.}
%
%
\section{Introduction}
\renewcommand{\thefootnote}{}\footnote{2010 \textit{Mathematics Subject Classification.} Primary 32D10, 32V10, 32T15; Secondary 32D20, 32V25.}\footnote{\textit{Key words and phrases.} Envelopes of holomorphy, $CR$ functions, strictly pseudoconvex domains, analytic structure.}\footnote{\!\!\!$^*$Supported by the project MURST "Geometric Properties of Real and Complex Manifolds".}
In this paper we are dealing with the extension problem of $CR$ functions defined on the boundary $\partial \Omega$ of an unbounded domain $\Omega$ in $\C^n$, $n\ge 2$. When $\Omega$ is bounded with a connected smooth boundary (no hypothesis of pseudoconvexity), holomorphic extension of $CR$ functions to the whole of $\Omega$ is granted by the classical result of Bochner (see, for example, Theorem 2.3.2$^{'}$ in [H]). In particular, if $\Omega$ is a domain of holomorphy, the envelope of holomorphy $E(\partial\Omega)$ of $\partial\Omega$ (i.e., the envelope of $\partial\Omega$ with respect to the algebra of $CR$ functions on $\partial\Omega$; for details see, for example, [J], [JS], [MP], [St]) coincides with $\overline\Omega$. For unbounded domains, such an extension result is not longer true in general, even for strictly pseudoconvex domains, as shown by the following example.\medskip

\noindent{\bf Example}. Let $f$ be an entire function in $\C^2$ and
\[ \Omega:= \big\{z \in \C^2: \log\abs{f(z)} + C_1\norm{z}^2 < C_2 \big\} \]
where $C_1$ and $C_2$ are constants and $C_1>0$. For almost all constants $C_2$, $\Omega$ is an unbounded strictly pseudoconvex domain with smooth boundary in $\C^2$ containing the divisor $\{f=0\}$. We are going to show that $E(\partial\Omega)$ is one-sheeted, contained in $\Omega$ and
\[ \overline{\Omega}\setminus E(\partial\Omega) = \big\{z \in \C^2: f(z)=0 \big\}. \]
Fix an exhaustion $V_1 \subset\subset V_2 \subset\subset \cdots \subset\subset \partial\Omega$ of $\partial\Omega$ by relatively compact subsets. Intersecting $\Omega$ by balls $B^2(0,R_k) \subset \C^2$ centered at the origin, of radius $R_k$, in such a way that $V_k \subset\subset \partial\Omega\cap B^2(0,R_k)$ and then smoothing the edges as in [To], we can find strictly pseudoconvex bounded domains $\Omega_k$ in $\C^2$ such that $V_k \subset \partial\Omega_k \cap \partial\Omega$ for every $k\in\N$. Let $\Gamma_k := \partial\Omega_k\setminus V_k$. Then, in view of Theorem A from [J], one has 
\[ E(V_k) = E(\partial\Omega_k \setminus \Gamma_k) = \overline{\Omega}_k\setminus \widehat{\Gamma_k}^{\mathcal{A}(\Omega_k)} \subset \C^2, \]
where $\widehat{\Gamma_k}^{\mathcal{A}(\Omega_k)}$ is the $\mathcal{A}(\Omega_k)$-hull of $\Gamma_k$, i.e., the hull of $\Gamma_k$ with respect to the algebra of holomorphic functions on $\Omega_k$ which are continuous up to the boundary. It follows that $E(\partial\Omega) := \bigcup_{k=1}^{\infty}E(V_k)$ is one-sheeted. We just have to show that $\overline{\Omega}\setminus E(\partial\Omega)$ is the divisor $\{f=0\}$. Since the $CR$ function $1/f$ on $\partial\Omega$ does not extend to $\{f=0\}$, it follows that $\{f=0\}\subset\overline\Omega\setminus E(\partial\Omega)$. Conversely, filling $\overline{\Omega}\setminus\{f=0\}$ by the following family of holomorphic curves, $\gamma_w = \{z \in \overline{\Omega} : f(z) = w\}$, where $w \in \C \setminus \{0\}$, and using the Kontinuit\"atssatz, it turns out that $\overline{\Omega}\setminus\{f=0\} \subset E(\partial\Omega)$.\bigskip

In this context we have to mention Tr\'epreau's Theorem [Tr] stating that, given a point $z$ in a smooth hypersurface $M\subset\C^n$, the homomorphism
$$
\mathcal O_z\rightarrow\varinjlim\limits_{U\ni z}\mathcal O(U\setminus M)
$$
is onto if and only if no germ of a complex hypersurface passing through $z$ is contained in $M$. We also recall Chirka's generalization [C] of Tr\'epreau's result (in the case $n=1$ this generalization can also be obtained from the earlier work [Sh]): Let $\Gamma\subset\C^{n+1}$ be a continuous graph over a convex domain $D\subset\C^n\times\R$ and $z\in\Gamma$ be a point such that none of the connected components of $(D \times\R) \backslash\Gamma$ is extendable holomorphically to $z$. Then, $z$ is contained in an $n$-dimensional holomorphic graph lying on and closed in $\Gamma$.\bigskip

A natural question arises: Let $\Omega$ be an unbounded strictly pseudoconvex domain  in $\C^n$, $n \ge 2$, such that $E(\partial\Omega)$ is one-sheeted and $\overline{\Omega}\setminus E(\partial\Omega)\neq\varnothing$; does $\overline{\Omega}\setminus E(\partial\Omega)$ possess an analytic structure? In this paper we prove that the answer to this question is negative. Precisely, we prove the following two theorems.

\begin{theorem} For each $n\in\N$, $n\ge 2$, there exist a closed set $\mathcal E\subset\C^n$ which contains no analytic variety of positive dimension and a plurisubharmonic function $\varphi:\C^n\to[-\infty,+\infty)$ such that
\begin{enumerate}
 \item[(1)] $\mathcal E=\{z\in\C^n:\varphi(z)=-\infty\}$;
 \item[(2)] The function $\varphi$ is pluriharmonic on $\C^n\setminus\mathcal E$;
 \item[(3)] The domain $\C^n\setminus\mathcal E$ is pseudoconvex;
 \item[(4)] For every $R>0$ one has $\widehat{\partial B^n(0,R)\cap\mathcal E} = \Bbarn(0,R)\cap\mathcal E$, where $B^n(0,R) \subset \C^n$ is the ball of radius $R$ centered at the origin and $\widehat{\partial B^n(0,R)\cap\mathcal E}$ denotes the polynomial hull of the set $\partial B^n(0,R)\cap\mathcal E$.
\end{enumerate} 
\end{theorem}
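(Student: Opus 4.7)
I would build $\varphi$ and $\mathcal{E}$ simultaneously by a Wermer-type iteration. The idea is to produce a sequence of polynomials $P_k$ on $\C^n$ of strictly increasing degrees $d_k$, together with weights $a_k>0$ and normalizing constants $M_k$, such that
\[
\varphi(z) \coloneqq \sum_{k=1}^{\infty} \frac{a_k}{d_k}\bigl(\log|P_k(z)| - M_k\bigr)
\]
is a plurisubharmonic function on $\C^n$ whose $-\infty$ set is exactly the closed set $\mathcal{E}$ to be constructed. Properties 1)--3) would then come almost for free: $\varphi$ is psh as a locally uniform limit of psh functions; it is pluriharmonic on $\C^n\setminus\mathcal{E}$ because each partial sum is pluriharmonic off the finitely many zero sets $\{P_k=0\}$ and the tail converges locally uniformly there once $a_k$ decays fast enough; and $\C^n\setminus\mathcal{E}$ is automatically pseudoconvex, since a complete pluripolar set cut out by a global psh function always has pseudoconvex complement.

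The polynomials are produced by the classical Wermer scheme adapted to $\C^n$. Working first in $\C^2$ with coordinates $(z,w)$, one starts from $P_1(z,w)=w$ and defines recursively $P_{k+1}(z,w) = P_k(z,w)^2 - \varepsilon_k^2 R_k(z)$, where $R_k$ is a polynomial chosen so that each branch of $\{P_k=0\}$ splits into two close branches of $\{P_{k+1}=0\}$ separated by a distance controlled by $\varepsilon_k$. With $\varepsilon_k\downarrow 0$ decaying sufficiently fast, these branches converge, on every ball and in Hausdorff distance, to a closed set $\mathcal{E}$. Unboundedness is obtained by allowing the $R_k$ to have suitable growth at infinity; for $n\ge3$ the construction extends either fibrewise over the last $n-2$ coordinates or by a product trick.

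Condition 4) should follow from the approximating structure. On each ball $\Bbarn(0,R)$ the analytic varieties $\{P_k=0\}\cap\Bbarn(0,R)$ are polynomially convex with Shilov boundary lying on $\partial B^n(0,R)$ (by a fibrewise maximum principle combined with Oka's theorem). A compactness argument passing to the Hausdorff limit, together with the standard characterization of polynomial hulls in terms of global psh functions applied to $\varphi$, then yields $\widehat{\partial B^n(0,R)\cap\mathcal{E}} = \Bbarn(0,R)\cap\mathcal{E}$.

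The main obstacle, and the technical heart of the proof, is ensuring that $\mathcal{E}$ contains no analytic variety of positive dimension. This is the classical Wermer-style rigidity argument: given any holomorphic disc $\phi\colon\D\to\mathcal{E}$, one exploits $\varphi\circ\phi\equiv-\infty$ together with Jensen's inequality applied to each $\log|P_k\circ\phi|/d_k$ to derive a contradiction, provided that the decay of $\varepsilon_k$ is tuned to the growth of $d_k$ so that no single disc can match all the approximating branches simultaneously. Balancing the parameters $\varepsilon_k$, $d_k$, $a_k$, and $M_k$ so that this rigidity coexists with the remaining conditions, especially the polynomial hull equality on every ball, is where most of the technical work must go.
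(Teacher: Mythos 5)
Your overall architecture matches the paper's: algebraic hypersurfaces $E_\nu=\{P_\nu=0\}$ of doubling degree, $\varphi=\lim\frac{1}{\deg P_\nu}\log|P_\nu|$, and the hull equality via the maximum principle on the $E_\nu$ plus the Lelong-class/psh characterization (the paper instead traps $\mathcal{E}$ between explicit sublevel sets $\{|P_\nu|<(1/\nu)^{2^\nu}\}$, but your route through $\varphi$ is viable since $\varphi$ has logarithmic growth). However, three steps are genuinely broken as written. First, and most importantly, the rigidity argument: the identity $\varphi\circ\phi\equiv-\infty$ holds for \emph{every} analytic disc inside \emph{every} complete pluripolar set (e.g.\ a disc in $\{w=0\}$ with $\varphi=\log|w|$), so no application of Jensen's inequality to $\frac{1}{d_k}\log|P_k\circ\phi|$ can produce a contradiction from it alone. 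The actual mechanism is geometric, not potential-theoretic: because the ramification loci $\{z_{[l]}=a_l\}$ are dense in the base, the branches of $g_\nu$ have square-root singularities everywhere, and one shows that a disc in $\mathcal{E}$ with nonconstant base projection would have unbounded difference quotients $|\phi_n(a+\xi)-\phi_n(a-\xi)|/|\xi|$, contradicting Cauchy's estimates. This slope estimate must survive passage to the limit (later roots can cancel the blow-up of earlier ones), which is what forces the delicate inductive choice of $\{\varepsilon_l\}$; your proposal does not engage with this at all.

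Second, the "product trick" for $n\ge3$ fails outright: $\mathcal{E}_2\times\C^{n-2}$ contains the analytic varieties $\{\text{pt}\}\times\C^{n-2}$, and a fibrewise construction with constant fibres is the same product. To kill discs in every direction one needs branch points $a^p_l$ dense in each of the $n-1$ base coordinates, and then the slope argument must handle points where ramification loci in several coordinate directions meet, since the corresponding square-root slopes can cancel; this is exactly the content of the paper's cone lemma (Lemma 4) and is why $n\ge3$ is substantially harder than $n=2$. Third, the claim that a complete pluripolar set automatically has pseudoconvex complement is false: $\{0\}=\{\log\lVert z\rVert=-\infty\}$ in $\C^2$, yet $\C^2\setminus\{0\}$ is not pseudoconvex. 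The correct argument needs more: either the paper's representation of $\C^n\setminus\mathcal{E}$ via nested polynomial sublevel sets, or the fact that $\varphi$ is \emph{pluriharmonic} off $\mathcal{E}$ and tends to $-\infty$ at $\mathcal{E}$, so that $-\varphi+\lVert z\rVert^2$ is a plurisubharmonic exhaustion of the complement; either way you must first establish the boundary behaviour $\varphi\to-\infty$ at $\mathcal{E}$, which is itself nontrivial and occupies the paper's Lemma 9.
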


\begin{theorem} For each $n \in \N$, $n \ge 2$, there exist an unbounded strictly pseudoconvex domain $\Omega$ in
$\C^n$ with smooth boundary, a closed subset $\mathcal{E}$ of \,$\C^n$ and a smooth $CR$ function $f$ on $\partial\Omega$ such that
\begin{enumerate}
 \item[(1)] $\mathcal{E}\subset\Omega$, and it contains no analytic variety of positive dimension;
 \item[(2)] $f$ has inside $\Omega$ a single-valued holomorphic extension exactly to $\Omega\setminus\mathcal{E}$;
 \item[(3)] The envelope of holomorphy $E(\partial\Omega)$ of the set $\partial \Omega$ is one-sheeted, and $E(\partial\Omega)=\overline{\Omega}\setminus\mathcal{E}$.
\end{enumerate}
\end{theorem}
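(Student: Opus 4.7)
The plan is to combine the objects produced by Theorem 1 with the procedure carried out in the Example of the Introduction. With $\mathcal{E}$ and $\varphi$ as in Theorem 1, for constants $C_1 > 0$ and a suitably chosen $C_2 \in \R$, define
\[ \Omega := \big\{z \in \C^n : \varphi(z) + C_1 \norm{z}^2 < C_2\big\}. \]
The defining function $\rho := \varphi + C_1 \norm{z}^2 - C_2$ is strictly plurisubharmonic on $\C^n$, so $\Omega$ is strictly pseudoconvex at every smooth boundary point. Because $\varphi \equiv -\infty$ on the unbounded pluripolar set $\mathcal{E}$, the inclusion $\mathcal{E} \subset \Omega$ holds, and hence $\Omega$ is unbounded. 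Off $\mathcal{E}$ the function $\varphi$ is pluriharmonic by property (2), so $\rho$ is real analytic on $\C^n \setminus \mathcal{E}$, and Sard's theorem guarantees that $\partial\Omega$ is smooth for almost every $C_2$; fix any such $C_2$, so that $\partial\Omega \subset \C^n \setminus \mathcal{E}$.

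For the $CR$ function, we use that $U := \C^n \setminus \mathcal{E}$ is pseudoconvex by property (3) and, as the construction of $\mathcal{E}$ underlying Theorem 1 will make transparent, topologically tame enough that the closed holomorphic $1$-form $\partial\varphi$ is exact on $U$. This produces a holomorphic $F : U \to \C$ with $\mathrm{Re}\, F = \varphi$. Let $f := e^{-F}$, a holomorphic function on $U$; its restriction to the smooth hypersurface $\partial\Omega \subset U$ is a smooth $CR$ function, and it extends single-valuedly to $\Omega \setminus \mathcal{E}$. Since $\abs{f} = e^{-\varphi}$ blows up along every sequence approaching $\mathcal{E}$, $f$ admits no holomorphic continuation across any point of $\mathcal{E}$. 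This establishes parts (1) and (2).

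For part (3) we follow the scheme of the Example of the Introduction. Exhausting $\partial\Omega$ by relatively compact $V_k$, truncating $\Omega$ by balls $B^n(0, R_k)$ with $V_k \subset\subset \partial\Omega \cap B^n(0, R_k)$, and smoothing edges as in [To] yields bounded strictly pseudoconvex $\Omega_k$ with $V_k \subset \partial\Omega_k \cap \partial\Omega$. Setting $\Gamma_k := \partial\Omega_k \setminus V_k$, Theorem A of [J] gives
\[ E(V_k) = \overline{\Omega}_k \setminus \widehat{(\Gamma_k)}_{\mathcal{A}(\Omega_k)}, \]
so $E(\partial\Omega) = \bigcup_k E(V_k)$ is one-sheeted and contained in $\overline{\Omega}$. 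The inclusion $E(\partial\Omega) \subset \overline{\Omega} \setminus \mathcal{E}$ is immediate from the previous paragraph, since $f|_{V_k}$ does not extend across any point of $\mathcal{E}$. The reverse inclusion, equivalent to showing that $\widehat{(\Gamma_k)}_{\mathcal{A}(\Omega_k)}$ contains no point of $\Omega_k \setminus \mathcal{E}$, is the main obstacle and is precisely the step for which property (4) of Theorem 1 is indispensable: because $\widehat{\partial B^n(0, R_k) \cap \mathcal{E}} = \Bbarn(0, R_k) \cap \mathcal{E}$, the portion of $\Gamma_k$ lying on the sphere $\partial B^n(0, R_k)$ contributes no point outside $\mathcal{E}$ to the $\mathcal{A}(\Omega_k)$-hull, whereas the strictly pseudoconvex smoothed corner pieces of $\Gamma_k$ can be disposed of by standard peak-function arguments at each of their points. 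Combining these yields $\widehat{(\Gamma_k)}_{\mathcal{A}(\Omega_k)} \subset \Gamma_k \cup (\overline{\Omega}_k \cap \mathcal{E})$, hence $\overline{\Omega} \setminus \mathcal{E} \subset E(\partial\Omega)$, completing the proof.
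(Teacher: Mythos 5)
Your construction of the domain $\Omega$ matches the paper's, but both halves of the remaining argument have gaps, one of them fatal as written. First, the existence of a single-valued holomorphic $F$ on $\C^n\setminus\mathcal{E}$ with $\mathrm{Re}\,F=\varphi$ is asserted, not proved: it requires the vanishing of the periods of $d^c\varphi$, and since $\varphi=\lim_\nu 2^{-\nu}\log\abs{P_\nu}$ with $\deg P_\nu=2^\nu$, a loop $\gamma$ in $\C^n\setminus\mathcal{E}$ can link $E_\nu$ with multiplicity comparable to $2^\nu$, so the periods $2\pi\,2^{-\nu}\operatorname{lk}(\gamma,E_\nu)$ need not tend to zero; for Wermer-type sets one in fact expects $dd^c\varphi$ to be a nontrivial positive closed current carried by $\mathcal{E}$. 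This particular gap is repairable without any topological input: the paper simply takes a slightly larger sublevel domain $\widetilde\Omega\supset\overline\Omega$, notes that $\widetilde\Omega\setminus\mathcal{E}$ is pseudoconvex and hence a domain of holomorphy, and lets $f$ be the boundary restriction of a holomorphic function on $\widetilde\Omega\setminus\mathcal{E}$ that extends to no point of $\mathcal{E}$.

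The serious gap is the inclusion $\overline\Omega\setminus\mathcal{E}\subset E(\partial\Omega)$. Property 4) of Theorem 1 controls the polynomial hull of the small set $\partial B^n(0,R_k)\cap\mathcal{E}$ only; it says nothing about $\widehat{(\Gamma_k)}_{\mathcal{A}(\Omega_k)}$, because $\Gamma_k$ contains the entire spherical cap $\overline\Omega\cap\partial B^n(0,R_k)$, a $(2n-1)$-dimensional set. The hull of such a cap generically contains open subsets of $\Omega_k$ far from $\mathcal{E}$ (it is swept by analytic discs whose boundaries lie on the cap), so the claimed containment $\widehat{(\Gamma_k)}_{\mathcal{A}(\Omega_k)}\subset\Gamma_k\cup(\overline\Omega_k\cap\mathcal{E})$ is false, and nothing in your argument forces a given point of $\Omega\setminus\mathcal{E}$ into $E(V_k)$. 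The step you are missing is the one that occupies most of the paper's proof: one fills $\Omega\cap\{\varphi>a\}$ with the analytic hypersurfaces $\{P_{N(a)}=c\}$, $\abs{c}>e^{2^{N(a)}(a-1)}$, and pushes the extension of an arbitrary $CR$ function from a one-sided neighbourhood of $\partial\Omega$ along this family via the Kontinuit\"atssatz in two-dimensional affine slices $\Omega\cap H$ (where Theorem A of [J] is actually applicable); for $n\ge3$ one additionally extends across compact subsets of the Stein manifolds $\{P_N=c\}$ by Hartogs' theorem and glues the resulting extensions using Hartogs' separate analyticity theorem and Riemann's removable singularity theorem. Some argument of this kind is indispensable and cannot be replaced by the hull statement 4).
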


The set $\mathcal E$ is obtained as a limit in the Hausdorff metric of a sequence $\{E_\nu\}$ of algebraic hypersurfaces of $\C^n=\C^{n-1}_z\times\C_w$ such that the union of the corresponding sets of ramification points with respect to the
projection $\C^n\to\C^{n-1}_z$ is an everywhere dense subset of $\C^{n-1}_z$. For $n=2$, this idea goes back to Wermer in [W], where an example of a compact set $K$ in $\C^2$ with nontrivial polynomial hull $\widehat{K}$ such that $\widehat{K}\setminus K$ has no analytic structure is given. Wermer's construction was then further exploited and developed in a series of articles [A], [D], [DS], [EM], [Le], [Sl]. Note also that, first, our construction of $\mathcal E$ is slightly different from Wermer's one (the main idea being the same), and, secondly, that, in the general case $n>2$, the situation is substantially more difficult from the technical point of view than that considered by Wermer.

Finally, let us mention a result due to Lupacciolu [Lu] about extendability of $CR$ functions defined on the boundary of an unbounded strictly pseudoconvex domain $\Omega$: Suppose that there exists a divisor which does not meet the domain $\Omega$. Then $E(\partial\Omega)=\overline{\Omega}$; namely, any $CR$ function on the boundary extends inside the domain.\medskip

{\footnotesize\noindent{\bf Acknowledgement.} Part of this work was done while the second author was a visitor at the Scuola Normale Superiore (Pisa) and at the Institut des Hautes \'Etudes Scientifiques (Paris). It is his pleasure to thank these institutions for their hospitality and excellent working conditions. }
\bigskip

%
%
%
%
%
%
\section{Construction of an unbounded Wermer type set in $\C^n$}
\indent Let $(z,w) = (z_1, \ldots, z_{n-1},w)$ denote the coordinates in $\C^n$ and for each $\nu \in \N$ let $\N_\nu \coloneqq \{1, 2, \ldots, \nu\}$. For each $p \in \N_{n-1}$ fix an everywhere dense subset $\{a_l^p\}_{l=1}^\infty$ of $\C$ such that $a_l^p \neq a_{l'}^p$ if $l \neq l'$. Further fix a bijection $\Phi \coloneqq ([\,\cdot\,], \phi) \colon \N \to \N_{n-1} \times \N$ and define a sequence $\{a_l\}_{l=1}^\infty$ in $\C$ by letting $a_l \coloneqq a^{[l]}_{\phi(l)}$. Moreover, let $\{\varepsilon_l\}_{l=1}^\infty$ be a decreasing sequence of positive numbers converging to zero that we consider to be fixed, but that will be further specified later on. Then for every $\nu \in \N$ we define $g_\nu$ to be the algebraic function 
\[ g_\nu(z) \coloneqq \sum_{l=1}^\nu \varepsilon_l \sqrt{z_{[l]} - a_l} \]
and let
\[ E_\nu \coloneqq \big\{ (z,w) \in \C^n : w = g_\nu(z) \big\}. \]
By definition, $g_\nu$ is a multi-valued function that takes $2^\nu$ values at each point $z \in \C^{n-1}$ (counted with multiplicities). Therefore we can always choose single-valued functions $w^{(\nu)}_1, \ldots, w^{(\nu)}_{2^\nu}$ on $\C^{n-1}$ such that 
\[ g_\nu(z) = \big\{w_j^{(\nu)}(z) : j = 1, \ldots, 2^\nu \big\} \]
for all $z \in \C^{n-1}$. Note that these functions are not continuous and that they are not uniquely determined, even though the set $g_\nu(z)$ is well-defined for each $z \in \C^{n-1}$. Indeed, we may freely change the numeration of the values $w^{(\nu)}_1(z), \ldots, w^{(\nu)}_{2^\nu}(z)$ for each $z \in \C^{n-1}$. 

Define for each $\nu\in\N$ a function $P_\nu \colon \C^n \to \C$ as
\[ P_\nu(z,w) \coloneqq \bigr(w-w^{(\nu)}_1(z)\bigl) \cdots \bigr(w - w^{(\nu)}_{2^\nu}(z)\bigl). \]
\begin{lemma} \label{thm_defPn}
The sequence $\{P_\nu\}_{\nu=1}^\infty$ consists of holomorphic polynomials on $\C^n$ and has the following properties:
  \begin{enumerate}
    \item[(1)] $E_\nu = \{(z,w) \in \C^n : P_\nu(z,w) = 0\}$.
    \item[(2)] $P_{\nu+1} \rightarrow P_\nu^2$ uniformly on compact subsets of $\C^n$ as $\varepsilon_{\nu+1} \rightarrow 0$. 
  \end{enumerate}
\end{lemma}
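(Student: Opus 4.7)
The plan is to exhibit a recursion relating $P_{\nu+1}$ to $P_\nu$, from which both claims follow. Explicitly, the $2^\nu$ values $w^{(\nu)}_j(z)$ of $g_\nu(z)$ are
\[ \bigl\{ \sigma_1 \varepsilon_1 \sqrt{z_{[1]}-a_1} + \cdots + \sigma_\nu \varepsilon_\nu \sqrt{z_{[\nu]}-a_\nu} : \sigma \in \{-1,+1\}^\nu \bigr\}, \]
so the $2^{\nu+1}$ values of $g_{\nu+1}(z)$ are obtained from those of $g_\nu(z)$ by adding and subtracting the two branches of $\varepsilon_{\nu+1}\sqrt{z_{[\nu+1]}-a_{\nu+1}}$. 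Writing $B(z) \coloneqq \varepsilon_{\nu+1}^2(z_{[\nu+1]}-a_{\nu+1})$ and grouping the factors of $P_{\nu+1}$ by the sign of the square root yields the key identity
\[ P_{\nu+1}(z, w) = \prod_{j=1}^{2^\nu} \bigl((w - w^{(\nu)}_j(z))^2 - B(z)\bigr), \]
equivalently $P_{\nu+1}(z, w) = P_\nu(z, w-b)\,P_\nu(z, w+b)$ for any choice of $b$ with $b^2 = B(z)$.

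Next I would prove by induction on $\nu$ that $P_\nu$ is a holomorphic polynomial on $\C^n$. The base case $P_1(z, w) = w^2 - \varepsilon_1^2(z_{[1]}-a_1)$ is immediate. For the step, viewing the recursion with $b$ as a formal variable, the expression $P_\nu(z, w-b)\,P_\nu(z, w+b)$ is a polynomial in $(z, w, b)$ by the inductive hypothesis, and it is invariant under $b \mapsto -b$ (the involution swaps the two factors), so it is in fact a polynomial $Q_\nu(z, w, b^2)$ in $(z, w, b^2)$. Concretely, writing $P_\nu(z, w) = \sum_k c_k(z) w^k$, one pairs the $(k,k')$ and $(k',k)$ contributions and uses that $(w+b)^m + (w-b)^m$ is an even polynomial in $b$. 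Substituting $b^2 = B(z)$, itself a polynomial in $z$, then gives $P_{\nu+1}(z, w) = Q_\nu(z, w, B(z))$, a polynomial in $(z, w)$.

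Property (1) is immediate from the definitions: $(z, w) \in E_\nu$ if and only if $w$ is one of the roots $w^{(\nu)}_j(z)$ of $P_\nu(z, \cdot)$. For property (2), the identity $P_{\nu+1}(z, w) = Q_\nu(z, w, B(z))$ combined with $Q_\nu(z, w, 0) = P_\nu(z, w)^2$ shows that
\[ P_{\nu+1}(z, w) - P_\nu(z, w)^2 = B(z) \cdot R_\nu(z, w, B(z)) \]
for a polynomial $R_\nu$ independent of $\varepsilon_{\nu+1}$. On any compact $K \subset \C^n$, $|B(z)|$ is bounded by a constant times $\varepsilon_{\nu+1}^2$, while $R_\nu(z, w, B(z))$ is bounded (for $\varepsilon_{\nu+1}$ in a bounded range), so the difference tends to $0$ uniformly as $\varepsilon_{\nu+1} \to 0$.

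The only genuine technical point is establishing the polynomiality of $P_\nu$: the branches $w^{(\nu)}_j$ are unavoidably discontinuous on $\C^{n-1}$, so one cannot directly expand the defining product factor by factor. The recursion circumvents this by packaging the $(\Z/2\Z)$-symmetry that arises at each step, from flipping a single square root, into a manifestly polynomial identity; and this same recursion essentially for free yields the quadratic dependence on $\varepsilon_{\nu+1}$ needed for part (2).
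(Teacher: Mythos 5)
Your proof is correct, and part (2) rests on exactly the same key identity as the paper's, namely $P_{\nu+1}(z,w)=\prod_{j}\bigl((w-w_j^{(\nu)}(z))^2-\varepsilon_{\nu+1}^2(z_{[\nu+1]}-a_{\nu+1})\bigr)$; whether one writes the error term as $B(z)\,R_\nu(z,w,B(z))$ as you do, or expands in elementary symmetric functions of the factors as the paper does, is immaterial. Where you genuinely diverge is in proving that $P_\nu$ is a polynomial. The paper argues analytically: away from the branch locus $\mathcal{A}_\nu$ the branches $w_j^{(\nu)}$ can be chosen holomorphic on polydiscs, so $P_\nu$ is holomorphic there; local boundedness plus Riemann's removable singularities theorem extends it across $\mathcal{A}_\nu$; and a growth estimate at infinity upgrades "entire" to "polynomial". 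Your argument is purely algebraic: induction on $\nu$ via $P_{\nu+1}(z,w)=P_\nu(z,w-b)P_\nu(z,w+b)$ with $b^2=B(z)$, using the evenness in $b$ to see that the product is a polynomial in $(z,w,b^2)$ and then substituting the polynomial $B(z)$. This is cleaner and more self-contained (no removable-singularity or growth argument needed, and no need to even discuss where the branches are discontinuous), and it has the side benefit of producing an explicit recursion $P_{\nu+1}=Q_\nu(z,w,B(z))$ from which one could derive a closed formula for $P_\nu$; the paper's route, on the other hand, generalizes more readily to situations where no such algebraic recursion is available. Both establish (1) identically, as an immediate consequence of the definitions.
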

\begin{proof}
First note that if for each $p \in \N_{n-1}$ we let $U_p$ be an open convex subset of $\C$ not meeting $A_\nu^p \coloneqq \{ a_l : l \in \N_\nu, [l] = p \}$, then after possibly renumbering the values $w_j^{(\nu)}(z)$ for $z \in U \coloneqq U_1 \times \cdots \times U_{n-1}$, we can always assume the functions $w^{(\nu)}_1, \ldots, w^{(\nu)}_{2^\nu}$ to be holomorphic on $U$. Since the value $P_\nu(z,w)$ is independent of the numeration of the $w_j^{(\nu)}(z)$, this shows that $P_\nu$ is a holomorphic function outside the set $\mathcal{A_\nu} \coloneqq \{(z,w) \in \C^n : z_p \in A_\nu^p \text{ for some } p \in \N_{n-1}\}$. Observing that $P_\nu$ is locally bounded near each point of $\mathcal{A}_\nu$ and applying Riemann's removable singularities theorem, we conclude that $P_\nu$ is actually holomorphic in the whole of $\C^n$. Then estimating $\abs{P_\nu}$ outside some ball $B^n(0,R) \subset \C^n$ from above by a suitable scalar multiple of $\abs{w^{2^\nu}} + \sum_{p=1}^{n-1} \abs{z_p^{2^{\nu-1}}}$, one can easily see that $P_\nu$ is in fact a holomorphic polynomial. To prove the second part of the lemma we observe that $P_{\nu+1}(z ,w)$ is in fact the product of the $2^\nu$ factors $\big((w-w^{(\nu)}_j(z))^2  - \varepsilon_{\nu+1}^2 (z_{[\nu+1]} - a_{\nu+1}) \big)$, $j \in \N_{2^\nu}$, and hence equals
\[ \sum_{p=0}^{2^\nu} (-1)^p \Big[ \big( \varepsilon_{\nu+1}^{2} ( z_{[\nu+1]}-a_{\nu+1}) \big)^{2^\nu-p} \cdot \!\!\!\!\! \sum_{1 \le j_1 < \cdots < j_p \le 2^\nu} \!\!\!\!\! \big(w-w^{(\nu)}_{j_1}(z)\big)^2 \cdots \big(w-w^{(\nu)}_{j_p}(z)\big)^2 \Big]. \]
Note that for $p = 2^\nu$ the inner sum equals $P_\nu^2(z,w)$. Since $w^{(\nu)}_1, \ldots, w^{(\nu)}_{2^\nu}$ are independent of $\varepsilon_{\nu+1}$ and bounded on compact subsets of $\C^{n-1}$, we conclude that $P_{\nu+1} \rightarrow P_\nu^2$ uniformly on compact subsets as $\varepsilon_{\nu+1} \rightarrow 0$.
\end{proof}

\noindent {\bf Remark.} A more careful consideration shows that one has the following explicit formula for $P_\nu$:
\begin{equation*} \label{equ_Pn}
P_\nu(z,w) = \sum_{d=0}^{2^{\nu-1}} (-1)^d \Big( \sum_{l=1}^\nu \varepsilon_l^2(z_{[l]} - a_l) \Big)^d w^{2^\nu-2d}.
\end{equation*}
\begin{lemma} \label{thm_defE}
Let $\{\varepsilon_l\}$ be chosen in such a way that $\varepsilon_l \sqrt{\abs{z_{[l]} - a_l}} < 1/2^l$ on $B^{n-1}(0,l) \subset \C_z^{n-1}$ for every $l \in \N$. Then the following assertions hold true: 
\begin{enumerate}
  \item[(1)] For every $R > 0$ and $\nu, \mu \in \N$, $\nu \ge R$, the Hausdorff distance between $E_\nu \cap \Bbarn(0,R)$ and $E_{\nu+\mu} \cap \Bbarn(0,R)$ is less than $1/2^\nu$. In particular, the sequence $\{E_\nu \cap \Bbarn(0,R)\}_{\nu=1}^\infty$ converges in the Hausdorff metric to a closed set $\mathcal{E}_{(R)} \subset \Bbarn(0,R)$.
  \item[(2)] The union $\mathcal{E} \coloneqq \bigcup_{R>0} \mathcal{E}_{(R)}$ of all $\mathcal{E}_{(R)}$ is a nonempty closed unbounded subset of $\C^n$ and a point $(z,w) \in \C^n$ lies in $\mathcal{E}$ if and only if there exists a sequence of complex numbers $w_\nu$ converging to $w$ such that $(z,w_\nu) \in E_\nu$ for every $\nu \in \N$. 
  \item[(3)] For each $z \in \C^{n-1}$, the set $\mathcal{E}_z \coloneqq \mathcal{E} \cap \big(\{z\} \times \C\big)$ has zero $2$-dimensional Lebesgue measure.
\end{enumerate}
\end{lemma}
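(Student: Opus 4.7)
The plan is to base everything on a single quantitative estimate: for $z \in \C^{n-1}$ with $\norm{z} < \nu$, any branch of $g_{\nu+\mu}(z)$ differs from its ``parent'' branch of $g_\nu(z)$ by at most
\[
\sum_{l=\nu+1}^{\nu+\mu} \varepsilon_l \sqrt{\abs{z_{[l]} - a_l}} \;<\; \sum_{l=\nu+1}^{\nu+\mu} 2^{-l} \;<\; 2^{-\nu},
\]
thanks to the hypothesis on $\{\varepsilon_l\}$. For 1), this will yield simultaneously that every $(z, w) \in E_{\nu+\mu} \cap \Bbarn(0,R)$ has a parent $(z, g_\nu(z))$ within distance $2^{-\nu}$, and conversely every point of $E_\nu \cap \Bbarn(0,R)$ has a descendant in $E_{\nu+\mu}$ within the same distance. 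A harmless adjustment near $\partial B^n(0,R)$ (where parent or descendant may escape $\Bbarn(0,R)$ by a margin of order $2^{-\nu}$) delivers the Hausdorff estimate stated. Completeness of the Hausdorff metric on compact subsets of $\Bbarn(0,R)$ then produces the limit $\mathcal{E}_{(R)}$.

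For 2), closedness of $\mathcal{E}$ follows from the observation that $\mathcal{E}_{(R_1)} \cap B^n(0, R_1) = \mathcal{E}_{(R_2)} \cap B^n(0, R_1)$ whenever $R_1 < R_2$; thus $\mathcal{E} \cap B^n(0, R) = \mathcal{E}_{(R')} \cap B^n(0, R)$ for any $R' > R$, which is closed in $B^n(0, R)$. Non-emptiness and unboundedness are both witnessed by fixing any $z \in \C^{n-1}$ and any sign pattern $\{\sigma_l\}$: the values $w_\nu \coloneqq \sum_{l=1}^\nu \sigma_l \varepsilon_l \sqrt{z_{[l]} - a_l}$ lie in $g_\nu(z)$ and form a Cauchy sequence by the same estimate, so converge to some $w$ with $(z, w_\nu) \in E_\nu$; the ``if'' half of the characterization then places $(z,w) \in \mathcal{E}$, and letting $\norm{z} \to \infty$ gives unboundedness. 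The ``if'' direction of the characterization itself is immediate: if $(z, w_\nu) \in E_\nu$ and $w_\nu \to w$, then for $R > \norm{(z,w)}$ the points $(z, w_\nu)$ lie in $E_\nu \cap \Bbarn(0,R)$ eventually and accumulate at $(z, w)$, which is therefore in $\mathcal{E}_{(R)}$.

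The ``only if'' direction of the characterization is the main technical obstacle. Given $(z, w) \in \mathcal{E}_{(R)}$, Hausdorff convergence only supplies approximants $(\zeta_\nu, w_\nu) \in E_\nu \cap \Bbarn(0, R)$ with $(\zeta_\nu, w_\nu) \to (z, w)$, but in general $\zeta_\nu \neq z$, whereas the conclusion requires points on the fibre $\{z\} \times \C$. I would exploit that for each fixed $\nu$ the algebraic function $g_\nu$ depends continuously on its argument in the Hausdorff metric on its finite value set, so there exists $\tilde w_\nu \in g_\nu(z)$ with $\abs{\tilde w_\nu - w_\nu}$ small once $\zeta_\nu$ is close enough to $z$. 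Because the continuity modulus deteriorates with $\nu$, a diagonal argument is needed: choose the approximating pair $(\zeta_\nu, w_\nu)$ closer to $(z, w)$ than this $\nu$-dependent threshold, after which the resulting $\tilde w_\nu$ satisfy $\tilde w_\nu \to w$ with $(z, \tilde w_\nu) \in E_\nu$.

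Finally, for 3) I fix $z$ and choose $N$ so that $\norm{z} < N$. The quantitative estimate yields
\[
\bigcup_{\nu \geq N} g_\nu(z) \;\subset\; \bigcup_{w_0 \in g_N(z)} \overline{D}(w_0, 2^{-N}),
\]
a union of at most $2^N$ closed disks of radius $2^{-N}$, hence of total two-dimensional area at most $\pi \cdot 2^N \cdot 2^{-2N} = \pi \cdot 2^{-N}$. By the characterization from 2), $\mathcal{E}_z$ is contained in the closure of this union and so has Lebesgue area at most $\pi \cdot 2^{-N}$; letting $N \to \infty$ shows that $\mathcal{E}_z$ has two-dimensional Lebesgue measure zero.
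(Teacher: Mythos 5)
Your proposal is correct and follows essentially the same route as the paper: the single estimate $\sum_{l>\nu}\varepsilon_l\sqrt{\abs{z_{[l]}-a_l}}<2^{-\nu}$ applied fibrewise over $z$, completeness of the Hausdorff metric, and the covering of $\mathcal{E}_z$ by $2^\nu$ discs of radius $2^{-\nu}$ for part 3; you are in fact more careful than the paper on the ``only if'' half of the characterization in 2), which the paper dismisses as immediate. One small repair: for fixed $\nu$ you cannot choose the approximant $(\zeta_\nu,w_\nu)\in E_\nu$ arbitrarily close to $(z,w)$, only within $\dist\big((z,w),E_\nu\big)\le 2^{1-\nu}$, so the ``diagonal'' step as phrased is not available — but it is also not needed, since matching branches via $\gabs{\sqrt{A}\pm\sqrt{B}}\le\sqrt{\abs{A-B}}$ gives the uniform modulus $d_H\big(g_\nu(\zeta),g_\nu(z)\big)\le\big(\sum_l\varepsilon_l\big)\sqrt{\norm{\zeta-z}}$, which together with $\norm{\zeta_\nu-z}\le 2^{1-\nu}$ already forces $\tilde w_\nu\to w$.
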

\begin{proof}
Let $\Delta_R \coloneqq \Bbarnn(0,R) \times \C$. For every $\big(z,w_j^{(\nu + \mu)}(z)\big) \in E_{\nu+\mu} \cap \overline{\Delta}_R$ there exists $\big(z,w_k^{(\nu)}\big) \in E_\nu \cap \overline{\Delta}_R$ such that for suitably chosen signs one has 
\[ w_j^{(\nu + \mu)}(z) = w_k^{(\nu)}(z) + \sum_{l=\nu+1}^{\nu+\mu} \pm \varepsilon_l \sqrt{z_{[l]} - a_l} \]
(here, by some abuse of notation, $\sqrt{\cdot}$ denotes a single-valued branch of the multi-valued function $\sqrt{\cdot}$). By assumption we have $\varepsilon_l \abs{\sqrt{z_{[l]}-a_l}\,} = \varepsilon_l \sqrt{\abs{z_{[l]}-a_l}} < 1/2^l$ on $\Bbarnn(0,R)$ for each $l > \nu$. Hence $\abs{w_j^{(\nu + \mu)}(z) - w_k^{(\nu)}(z)} < 1/2^\nu$, and it follows that the Hausdorff distance between $E_{\nu+\mu} \cap \overline{\Delta}_R$ and $E_\nu \cap \overline{\Delta}_R$ is less than $1/2^\nu$. In particular, $\{E_\nu \cap \Bbarn(0,R)\}_{\nu = 1}^\infty$ is a Cauchy sequence in the Hausdorff metric and thus converges to a nonempty closed subset $\mathcal{E}_{(R)} \subset \C^n$. Since $\mathcal{E} \cap \Bbarn(0,R) = \mathcal{E}_{(R)}$ for all $R > 0$, we conclude that $\mathcal{E}$ is closed. Obviously, it is also unbounded and nonempty. The characterization of $(z,w) \in \mathcal{E}$ as a limit of points $(z, w_\nu) \in E_\nu$ follows immediately from the facts that in each bounded neighbourhood of $(z,w)$ the set $\mathcal{E}$ is the limit of $\{E_\nu\}$ in the Hausdorff metric and that $E_\nu \cap \bigl( \{z\} \times \C \bigr) \neq \varnothing$ for all $z \in \C^{n-1}$. Finally, by what we have already proven, we know that the Hausdorff distance between $E_\nu \cap \overline{\Delta}_R$ and $\mathcal{E}_{(R)}$ is not greater than $1/2^\nu$ if $\nu \ge R$. Hence if $z \in \C^{n-1}$ is fixed, the set $\mathcal{E}_z$ is contained in $\{z\} \times \bigcup_{j=1}^{\,2^\nu} \overline{\Delta}^{\begin{minipage}{1ex} $ \scriptstyle \vspace{-1.4ex} 1 $ \end{minipage}}(w_j^{(\nu)}(z), 1/2^\nu)$ for every $\nu \in \N$ large enough (here $\overline{\Delta}^{\begin{minipage}{1ex} $ \scriptstyle \vspace{-1.4ex} 1 $ \end{minipage}}(a,r) \subset \C$ denotes the closed disc centered at the point $a$, of radius $r$). But the volume of the latter set is not greater than $\pi / 2^\nu$; thus $\mathcal{E}_z$ has zero $2$-dimensional Lebesgue measure.
\end{proof}

If $\{\varepsilon_l\}$ converges to zero fast enough, then by the previous lemma the analytic sets $E_\nu$ determine a limit set $\mathcal{E}$. We want to use this set in the construction of our example. To do so, we need to have two specific properties of this set. Namely, we want to ensure that $\mathcal{E}$ has no analytic structure, and we seek a description of $\mathcal{E}$ in terms of certain sublevel sets of the polynomials $P_\nu$. In the next two sections we will show that we indeed can assure $\mathcal{E}$ to have these properties, provided that $\{\varepsilon_l\}$ is converging to zero fast enough.

%
%
%
%
%
%
\section{Choice of the sequence $\{\varepsilon_l\}$ - Part I}
First we want to show that, for $\{\varepsilon_l\}$ decreasing fast enough, the set $\mathcal{E}$ contains no analytic varieties of positive dimension. In order to do so, it obviously suffices to show that $\mathcal{E}$ contains no analytic disc, i.e., there exists no (nonconstant) holomorphic mapping $f \colon \D \to \C^n$ from the unit disc $\D \subset \C$ to $\C^n$ with image completely contained in $\mathcal{E}$. For analytic discs with constant $z$-coordinates this is immediately clear, since we know that $\mathcal{E}_z$ has zero two-dimensional Lebesgue measure for every $z \in \C^{n-1}$. The hard part is to show that there exists no analytic disc $f(\D) \subset \mathcal{E}$ such that the projection $f_z \coloneqq \pi_z \circ f$ onto $\C_z^{n-1}$ is not constant. The general idea is the following: Let $f \colon \D \to \C^n$ be an analytic disc lying in the analytic hypersurface $w = \sqrt{z_p - a}$, $a \in \C$, and such that $f_z \colon \D \to \C_z^{n-1}$ is a biholomorphic embedding of $\D$ into $\C_z^{n-1}$. Then $f_z(\D)$ is either completely contained in the slice $S_a^p \coloneqq \{z \in \C^{n-1} : z_p = a \}$ or does not intersect $S_a^p$ at all. This is due to the fact that if $S_a^p \cap f_z(U) = \{z_0\}$, $U \subset \D$ open and small enough, then for the canonical parametrization $g \colon f_z(U) \to \C_w$ of $f(U)$ and for $\zeta^+, \zeta^- \in \C^{n-1}$ such that $z_0 + \zeta^+, z_0 - \zeta^- \in f_z(U)$, the slope $\abs{g(z_0 + \zeta^+) - g(z_0 - \zeta^-)}/\norm{\zeta^+ + \zeta^-}$ becomes unbounded as $\zeta^+, \zeta^- \to 0$, which contradicts the holomorphicity of $g$. Since each set $E_\nu$ is defined by a sum of terms of the form $\sqrt{z_{[l]} - a_l}$, and since, moreover, the subsequence $\{a_l^p\}_{l=1}^\infty$ of $\{a_l\}$ is dense in $\C$, this will enable us to show that for $\{\varepsilon_l\}$ decreasing fast enough, every analytic disc $f(\D) \subset \mathcal{E}$ must have constant $z_p$-coordinate. Due to the fact that $p \in \N_{n-1}$ here is arbitrary, our assertion will be proved.

There arise some technical difficulties, the most important of which is the following: while for every above-described analytic disc in the analytic hypersurface $w = \sqrt{z_p - a}$ the projection $f_z(\D)$ cannot intersect $S_a^p$ (at least if its $z_p$-coordinate is not already constant), this property might get spoiled when adding further terms $\sqrt{z_{[l]} - a_l}$, $l\in \N$, and thus does not carry over necessarily to the limit set $\mathcal{E}$. In general this problem can be easily handled, except, however, at points $z_0 \in S_a^p$ that are contained in $S^{[l]}_{a_l}$ for more than one $l \in \N$. In this situation there are root branches originating from $z_0$ in different directions $p_1, \ldots, p_T \in \N_{n-1}$, and in general their slopes near the point $z_0$ may cancel out each other. To deal with this problem, we will show that we can at least guarantee the following: for every $z_0 \in S^{[l]}_{a_l} \cap B^{n-1}(0,l)$, $l \in \N$, there does not exist any analytic disc $f(\D) \subset \mathcal{E}$ such that $f_z(\D) \cap S^{[l]}_{a_l} = \{z_0\}$ and such that $f_z(\D)$ is contained in the cone $z_0 + \bigcap_{t=1}^T \Gamma^{p_t}(\alpha)$; here
\[ \Gamma^p(\alpha) \coloneqq \{\zeta \in \C^{n-1} : \zeta_p \neq 0 \;\;\text{and}\;\; \frac{\abs{\zeta_q}}{\abs{\zeta_p}} < \alpha, \;\,\text{for all}\;\, q \in \N_{n-1}, \, q \neq p \}, \]
where $\alpha$ is a positive number that will depend on the choice of $\{\varepsilon_l\}$ (note that if $\zeta \in \Gamma^p(\alpha)$, then also $\lambda \zeta \in \Gamma^p(\alpha)$ for every $\lambda \in \C^\ast$). In fact, the faster $\{\varepsilon_l\}$ decreases, the larger we will be able to choose $\alpha$. It turns out that this weaker assertion is sufficient for our purpose, since locally for every analytic disc $f(\D) \subset \mathcal{E}$ the projection $f_z(\D)$ lies in $\bigcap_{t=1}^T \Gamma^p(\alpha)$ for suitable $p_1, \ldots, p_T \in \N_{n-1}$ and $\alpha > 0$ large enough.

The above complications, as well as most of the other technical difficulties for choosing the sequence $\{\varepsilon_l\}$, do not occur in the case $n = 2$. In fact, in this case the proof becomes relatively simple, and most of the work of this section is not needed. Hence in what follows we will often implicitly assume that $n \ge 3$, though this will not have any influence on the course and correctness of our arguments (for example, the set $\Gamma^p(\alpha) = \C^\ast$ is still well-defined for $n = 2$, though it is obviously not needed in this case).

\noindent {\bf Remark.} Many of the statements in this section involve the function $\sqrt{\cdot} \colon \C \to \C$, which is multivalued. In general, whenever such a statement is made, we will implicitly mean it to hold true for every choice of a single-valued branch $(\sqrt{\cdot}\,)_b \colon \C \to \C$ of $\sqrt{\cdot}$ (no assumptions on continuity). However, there will be cases when we will have to deal with particular single-valued branches of $\sqrt{\cdot}$\,. By some abuse of notation, they will be denoted by the same symbol $\sqrt{\cdot}$\,. We will always point out when $\sqrt{\cdot}$ denotes a particular single-valued branch whenever such a situation first occurs. 

\begin{lemma} \label{thm_defC}
There exists a constant $0<C<1$ such that for all $z, z',\zeta \in \C$,
\[ \sqrt{\abs{\zeta}} \le \big\lvert \sqrt{z + \zeta} - \sqrt{z' - \zeta} \,\big\rvert \le 2\sqrt{\abs{\zeta}} \quad \text{if} \quad \abs{z}, \abs{z'} \le C \abs{\zeta}. \]
\end{lemma}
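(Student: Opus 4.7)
The plan is to exploit two algebraic identities that tie the target quantity $D := |\sqrt{z+\zeta} - \sqrt{z'-\zeta}|$ to its companion $S := |\sqrt{z+\zeta} + \sqrt{z'-\zeta}|$. Writing $u, v$ for particular square-root branches of $z+\zeta$ and $z'-\zeta$, the identities $(u-v)(u+v) = u^2 - v^2$ and $|u-v|^2 + |u+v|^2 = 2(|u|^2 + |v|^2)$ translate into
\[ DS = |z - z' + 2\zeta|, \qquad D^2 + S^2 = 2\bigl(|z+\zeta| + |z'-\zeta|\bigr). \]
A convenient byproduct is that flipping one of the chosen branches to its negative simply interchanges the roles of $D$ and $S$, so any estimate that treats these two quantities symmetrically will automatically be independent of the branch choice required by the remark preceding the lemma.

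The key step is to observe that $D^2$ and $S^2$ are the two nonnegative roots of the quadratic $t^2 - \Sigma t + \Pi = 0$ with $\Sigma := D^2 + S^2$ and $\Pi := (DS)^2$. Under the hypothesis $|z|, |z'| \le C|\zeta|$, the two identities above immediately yield $\Sigma \in \bigl[4(1-C), 4(1+C)\bigr]|\zeta|$ and $\Pi \in \bigl[4(1-C)^2, 4(1+C)^2\bigr]|\zeta|^2$, and a short computation of the discriminant
\[ \Sigma^2 - 4\Pi \;\le\; 16\bigl[(1+C)^2 - (1-C)^2\bigr]|\zeta|^2 \;=\; 64 C |\zeta|^2 \]
gives $|D^2 - S^2| \le 8\sqrt{C}\,|\zeta|$. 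Combining these estimates confines both roots to an interval of the form $\bigl[(2(1-C) - 4\sqrt{C})|\zeta|,\, 2(1+\sqrt{C})^2|\zeta|\bigr]$, and thus
\[ \sqrt{2(1-C) - 4\sqrt{C}}\;\sqrt{|\zeta|} \;\le\; D \;\le\; \sqrt{2}\,(1+\sqrt{C})\,\sqrt{|\zeta|}. \]

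To finish I would choose $C \in (0,1)$ small enough that the left factor is $\ge 1$ and the right factor is $\le 2$. Both requirements reduce to elementary inequalities in $\sqrt{C}$, namely $4\sqrt{C} + 2C \le 1$ and $\sqrt{C} \le \sqrt{2} - 1$, which are comfortably satisfied for instance by $C = 1/20$. I foresee no serious obstacle: the lemma is really a quantitative perturbation of the limiting identity $|\sqrt{\zeta} - \sqrt{-\zeta}| = \sqrt{2}\sqrt{|\zeta|}$ attained when $z = z' = 0$, which lies safely inside $[1,2]\sqrt{|\zeta|}$. The only mildly delicate point, branch independence, is built into the argument by the symmetric treatment of the two roots of the quadratic.
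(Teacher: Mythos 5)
Your argument is correct, and it is genuinely different from the one in the paper. The paper simply rescales: dividing by $\sqrt{\abs{\zeta}}$ and using homogeneity of the square root turns the quantity into $\gabs{\sqrt{(z/\zeta)+1}-\sqrt{(z'/\zeta)-1}}$, which tends to $\sqrt{2}\in(1,2)$ as $z/\zeta, z'/\zeta\to 0$ (uniformly over branch choices, since the possible limits are $\pm1\mp i$), so any sufficiently small $C$ works; this is a two-line existence argument that does not exhibit $C$. You instead exploit the identities $DS=\abs{z-z'+2\zeta}$ and $D^2+S^2=2(\abs{z+\zeta}+\abs{z'-\zeta})$ to trap $D^2$ and $S^2$ as the roots of an explicit quadratic, which yields the quantitative enclosure $D^2\in\bigl[(2(1-C)-4\sqrt{C})\abs{\zeta},\,2(1+\sqrt{C})^2\abs{\zeta}\bigr]$ and the concrete admissible value $C=1/20$; I checked the discriminant bound $\Sigma^2-4\Pi\le 64C\abs{\zeta}^2$ and the two final inequalities in $\sqrt{C}$, and they are all right. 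Your approach costs more computation but buys an explicit constant and a structurally clean treatment of branch independence (flipping a branch swaps $D$ and $S$, and your bounds apply to both roots symmetrically), whereas the paper's approach leans on the remark preceding the lemma and on the reader accepting the uniformity of the limit. Either proof is acceptable for the role the lemma plays later, since nothing downstream depends on the numerical value of $C$.
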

\begin{proof}
This is immediately clear, since 
\begin{equation*} \frac{\gabs{\sqrt{z + \zeta} - \sqrt{z' - \zeta}\,}}{\sqrt{\abs{\zeta}}} = \Gabs{\sqrt{(z/\zeta) + 1} - \sqrt{(z'/\zeta) - 1}\,} \xrightarrow{z/\zeta,\, z'\!/\zeta \to \, 0} \sqrt{2}\,. \end{equation*}
\end{proof}
\begin{lemma} \label{thm_sqrt}
For every $p \in \N_{n-1}$ and $\alpha > 0$, one has 
\[ \lim_{\zeta \to 0} \frac{\gabs{\sqrt{\zeta_p} - \sqrt{-\zeta_p}}}{2\norm{\zeta}} = + \infty \quad \text{on} \quad \Gamma^p(\alpha). \]
\end{lemma}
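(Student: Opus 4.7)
The plan is to reduce the claim to the elementary estimate that $\sqrt{|\zeta_p|}/\|\zeta\|\to\infty$ whenever $\zeta\to 0$ inside the cone $\Gamma^p(\alpha)$. The numerator of the quotient can be computed in closed form because of the very special structure of the expression $\sqrt{\zeta_p}-\sqrt{-\zeta_p}$: no matter which single-valued branches one picks for each of the two square roots, the two values differ by a factor of $\pm i$, so the numerator has a fixed size depending only on $|\zeta_p|$.

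More precisely, first I would fix any branches and write $\zeta_p=re^{i\theta}$ with $r=|\zeta_p|$. One branch of $\sqrt{\zeta_p}$ equals $a\sqrt{r}\,e^{i\theta/2}$ with $a\in\{+1,-1\}$, while any branch of $\sqrt{-\zeta_p}$ equals $bi\sqrt{r}\,e^{i\theta/2}$ with $b\in\{+1,-1\}$, since $-\zeta_p=re^{i(\theta+\pi)}$. Consequently
\[
 \sqrt{\zeta_p}-\sqrt{-\zeta_p}=(a-bi)\sqrt{r}\,e^{i\theta/2},
\]
and $|a-bi|=\sqrt{2}$, so independently of the choice of the two branches one has the exact equality
\[
 \gabs{\sqrt{\zeta_p}-\sqrt{-\zeta_p}}=\sqrt{2}\,\sqrt{|\zeta_p|}.
\]

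Next I would use the definition of $\Gamma^p(\alpha)$ to bound $\|\zeta\|$ in terms of $|\zeta_p|$ alone. For $\zeta\in\Gamma^p(\alpha)$ we have $|\zeta_q|<\alpha|\zeta_p|$ for every $q\neq p$, and therefore
\[
 \norm{\zeta}^2=\sum_{q=1}^{n-1}|\zeta_q|^2\le \bigl(1+(n-2)\alpha^2\bigr)|\zeta_p|^2,
\]
so $\norm{\zeta}\le C_\alpha\,|\zeta_p|$ with $C_\alpha\coloneqq\sqrt{1+(n-2)\alpha^2}$. Combining the two bounds gives
\[
 \frac{\gabs{\sqrt{\zeta_p}-\sqrt{-\zeta_p}}}{2\norm{\zeta}}\ge \frac{\sqrt{2}\,\sqrt{|\zeta_p|}}{2C_\alpha|\zeta_p|}=\frac{1}{\sqrt{2}\,C_\alpha\sqrt{|\zeta_p|}}.
\]

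Finally, I would observe that if $\zeta\to 0$ while staying in $\Gamma^p(\alpha)$, in particular $|\zeta_p|\to 0$ (otherwise $\|\zeta\|$ would not tend to zero), so the right-hand side tends to $+\infty$, proving the claim. There is really no serious obstacle here: the only step requiring any care is the first one, where one must verify that the identity $|\sqrt{\zeta_p}-\sqrt{-\zeta_p}|=\sqrt{2}\sqrt{|\zeta_p|}$ is branch-independent, which is exactly the computation above.
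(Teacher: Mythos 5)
Your proof is correct and follows essentially the same route as the paper: both rest on the branch-independent identity $\gabs{\sqrt{\zeta_p}-\sqrt{-\zeta_p}}=\sqrt{2}\sqrt{\abs{\zeta_p}}$ followed by the observation that on $\Gamma^p(\alpha)$ the norm $\norm{\zeta}$ is bounded by a constant multiple of $\abs{\zeta_p}$, so the quotient is bounded below by a constant times $\abs{\zeta_p}^{-1/2}\to+\infty$. The paper phrases the cone estimate as $\sqrt{\abs{\zeta_p}}/\norm{\zeta}\ge\bigl(\sum_q c_\alpha\abs{\zeta_q}\bigr)^{-1/2}$ with $c_\alpha=\max\{1,\alpha\}$, which is equivalent to your bound $\norm{\zeta}\le C_\alpha\abs{\zeta_p}$; your explicit verification that the identity for the numerator holds for every choice of branches is a welcome piece of care that the paper leaves implicit.
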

\begin{proof}
Indeed, with $c_\alpha \coloneqq \max\{1, \alpha\}$ we have
\[ \frac{\abs{\sqrt{\zeta_p} - \!\sqrt{-\zeta_p}\,}}{2\norm{\zeta}} = \frac{1}{\sqrt{2}} \frac{\sqrt{\abs{\zeta_p}}}{\norm{\zeta}} = \frac{1}{\sqrt{2}} \Big( \sum_{q=1}^{n-1} \frac{\abs{\zeta_q}^2}{\abs{\zeta_p}} \Big)^{-1/2} \ge \frac{1}{\sqrt{2}} \Big( \sum_{q=1}^{n-1} c_\alpha \abs{\zeta_q} \Big)^{-1/2} \]
on $\Gamma^p(\alpha)$, and the last term tends to $+ \infty$ as $\zeta \to 0$.
\end{proof}
\begin{lemma} \label{thm_cones}
Let $P \coloneqq \{p_t\}_{t=1}^T \subset \{1, \ldots, n-1\}$, $p_t\neq p_{t'}$ if $t\neq t'$, and $\{e_t\}_{t=1}^T \subset (0, \infty)$, $T \ge 2$. Define a constant $\alpha > 0$ by $\alpha \coloneqq \min \big\{ \frac{1}{9}(e_m/e_{m+1})^2 : 1 \le m \le T - 1 \big\}$. Then for every $\nu > 0$, there exists a positive number $\delta > 0$ such that
\[ \frac{\gabs{\sum_{m=1}^T e_m \big( \sqrt{z_{p_m} + (\zeta_{p_m} + \zeta_{p_m}')} - \sqrt{z_{p_m} - (\zeta_{p_m} + \zeta_{p_m}'')} \, \big)}}{2\norm{\zeta}} > \nu \]
for every $\zeta \in \big( \bigcap_{m=1}^T \Gamma^{p_m}(\alpha) \big) \cap B^{n-1}(0, \delta)$ and $\zeta', \zeta'', z \in \Delta^{n-1}\big(0, (C/2)\abs{\zeta}_P\big)$. Here $C$ is the constant from Lemma \ref{thm_defC}, $\abs{\zeta}_P \in [0,\infty]^{n-1}$ is defined by $(\abs{\zeta}_P)_p = \abs{\zeta_p}$ if $p \in P$, $(\abs{\zeta}_P)_p = \infty$ if $p \in \complement P \coloneqq \N_{n-1} \setminus P$, and $\Delta^{n-1}\big(0,(r_1, \ldots, r_{n-1})\big) \coloneqq \{z \in \C^{n-1} : \abs{z_p} < r_p, \text{ if } r_p > 0, \text{ or } z_p = 0, \text{ if } r_p = 0, \;p \in \N_{n-1}\}$ for $r \in [0, \infty]^{n-1}$.
\end{lemma}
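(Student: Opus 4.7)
The plan is to reduce the estimate to a dominant-term argument. Applying Lemma~\ref{thm_defC} termwise will give a two-sided bound on each summand, the cone condition together with the definition of $\alpha$ will force the $m=1$ term to dominate geometrically, and the remaining ratio $\sqrt{\abs{\zeta_{p_1}}}/\norm{\zeta}$ will blow up as $\norm{\zeta}\to 0$.

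First, for each $m \in \N_T$ I would apply Lemma~\ref{thm_defC} with $\tilde z_m \coloneqq z_{p_m} + \zeta'_{p_m}$ and $\tilde z'_m \coloneqq z_{p_m} - \zeta''_{p_m}$. The assumption $z,\zeta',\zeta'' \in \Delta^{n-1}\bigl(0,(C/2)\abs{\zeta}_P\bigr)$ together with $p_m \in P$ yields $\abs{\tilde z_m}, \abs{\tilde z'_m} \le C\abs{\zeta_{p_m}}$ (note $\zeta_{p_m}\neq 0$ since $\zeta \in \Gamma^{p_m}(\alpha)$), so Lemma~\ref{thm_defC} gives
\[ \sqrt{\abs{\zeta_{p_m}}} \;\le\; \gabs{\sqrt{z_{p_m} + \zeta_{p_m} + \zeta'_{p_m}} - \sqrt{z_{p_m} - \zeta_{p_m} - \zeta''_{p_m}}} \;\le\; 2\sqrt{\abs{\zeta_{p_m}}}. \]
Next, the cone condition $\zeta \in \Gamma^{p_m}(\alpha)$ together with $p_{m+1} \neq p_m$ implies $\abs{\zeta_{p_{m+1}}}/\abs{\zeta_{p_m}} < \alpha \le (1/9)(e_m/e_{m+1})^2$, i.e.\ $e_{m+1}\sqrt{\abs{\zeta_{p_{m+1}}}} < \tfrac{1}{3}\, e_m\sqrt{\abs{\zeta_{p_m}}}$. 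Iterating along $m$ gives $e_m\sqrt{\abs{\zeta_{p_m}}} \le 3^{1-m} e_1 \sqrt{\abs{\zeta_{p_1}}}$ for every $m \in \N_T$.

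Denoting by $S$ the sum in the numerator and combining the previous bounds with the reverse triangle inequality,
\[ \abs{S} \;\ge\; e_1\sqrt{\abs{\zeta_{p_1}}} - 2\sum_{m=2}^T e_m\sqrt{\abs{\zeta_{p_m}}} \;\ge\; e_1\sqrt{\abs{\zeta_{p_1}}}\Bigl(1 - 2\sum_{k=1}^{T-1} 3^{-k}\Bigr) \;=\; 3^{1-T} e_1 \sqrt{\abs{\zeta_{p_1}}}. \]
Finally, since $\zeta \in \Gamma^{p_1}(\alpha)$ forces $\norm{\zeta}^2 < \bigl(1 + (n-2)\alpha^2\bigr)\abs{\zeta_{p_1}}^2$, one obtains
\[ \frac{\abs{S}}{2\norm{\zeta}} \;\ge\; \frac{3^{1-T}\, e_1}{2\bigl(1 + (n-2)\alpha^2\bigr)^{1/4}} \cdot \frac{1}{\sqrt{\norm{\zeta}}}, \]
which exceeds $\nu$ once $\delta$ is chosen small enough (depending on $\nu$, $e_1$, $T$, $n$, $\alpha$).

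The main obstacle is the bookkeeping of the geometric series: the factor $2$ appearing in the upper bound of Lemma~\ref{thm_defC} must be absorbed by the factor $1/3$ coming from the inequality $\alpha \le (1/9)(e_m/e_{m+1})^2$. The constant $1/9$ in the definition of $\alpha$ is chosen precisely so that $2\sum_{k \ge 1} 3^{-k} = 1 - 3^{1-T} < 1$, leaving the strictly positive residual $3^{1-T}$ on the right-hand side; any weaker constant would break this balance and destroy the dominant-term argument.
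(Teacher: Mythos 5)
Your argument is correct, and it reaches the same estimate as the paper by a slightly different organization of the same two ingredients (termwise application of Lemma \ref{thm_defC} plus the geometric decay $e_{m+1}\sqrt{\abs{\zeta_{p_{m+1}}}}<\tfrac13 e_m\sqrt{\abs{\zeta_{p_m}}}$ forced by the cone condition and the constant $\tfrac19$). The paper runs an induction on $t$ that keeps, at each stage, the lower bound $e_t\sqrt{\abs{\zeta_{p_t}}}$ anchored at the \emph{last} index so far, ending with $\gabs{S}\ge e_T\sqrt{\abs{\zeta_{p_T}}}$, and then invokes Lemma \ref{thm_sqrt} on $\Gamma^{p_T}(\alpha)$ to get the blow-up of $\sqrt{\abs{\zeta_{p_T}}}/\norm{\zeta}$. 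You instead sum the geometric series in one shot, anchoring at the \emph{first} index, obtaining the explicit bound $\gabs{S}\ge 3^{1-T}e_1\sqrt{\abs{\zeta_{p_1}}}$, and you replace the appeal to Lemma \ref{thm_sqrt} by the direct comparison $\norm{\zeta}^2<\bigl(1+(n-2)\alpha^2\bigr)\abs{\zeta_{p_1}}^2$ valid on $\Gamma^{p_1}(\alpha)$. Your version is arguably cleaner and yields a quantitative rate ($\gabs{S}/2\norm{\zeta}\gtrsim \norm{\zeta}^{-1/2}$), at the price of being a little less modular (the paper reuses Lemma \ref{thm_sqrt} elsewhere, e.g.\ in Step 2 of Lemma \ref{thm_wjwk} when $\sL_\nu^P(z_0)=\{\nu\}$). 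Two cosmetic remarks: the displayed identity near the end should read $2\sum_{k=1}^{T-1}3^{-k}=1-3^{1-T}$ (the finite sum, as in your computation) rather than $2\sum_{k\ge1}3^{-k}$; and it is worth noting explicitly, as you do implicitly, that $\zeta\in\bigcap_m\Gamma^{p_m}(\alpha)$ guarantees $\zeta_{p_m}\neq0$ for every $m$, so Lemma \ref{thm_defC} is applicable to each summand.
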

\noindent {\bf Remark.} The statement of this lemma is interesting and will be used only in the case when $\alpha > 1$ (otherwise the intersection $\bigcap_m \Gamma^{p_m}(\alpha)$ is empty).
\begin{proof}
For every $m \in \N_{T-1}$ we define $\alpha_m \coloneqq \frac{1}{9} (e_m/e_{m+1})^2$, and for every $m \in \N_T$ we let $D_m(\zeta) \coloneqq \{ z \in \C^{n-1} : \abs{z_{p_m}} \le C\abs{\zeta_{p_m}}\}$. We will show by induction that for every $t = 1, \ldots, T$, the inequality
\begin{equation} \label{equ_Ht}
\Gabs{ \sum_{m=1}^t e_m \big( \sqrt{z_{p_m}' + \zeta_{p_m}} - \sqrt{z_{p_m}'' - \zeta_{p_m}}\, \big) } \ge e_t \textstyle \sqrt{\abs{\zeta_{p_t}}}
\end{equation}
holds true for $\zeta \in \bigcap_{m = 1}^{t-1} \Gamma^{p_m}(\alpha_m)$ and $z', z'' \in \bigcap_{m = 1}^{t} D_m(\zeta)$. Indeed, the case $t = 1$ is already proven by Lemma \ref{thm_defC}. For the step $t \to t+1$, let $H_{t+1}$ denote the left term in $(\ref{equ_Ht})$ where the sum is taken up to $t+1$. Using the induction hypothesis and applying Lemma \ref{thm_defC}, we see that
\begin{equation*} \begin{split}
  H_{t+1} & \ge \!\Gabs{\! \sum_{m=1}^t \! e_m \textstyle \big( \sqrt{z_{p_m}' \!\!+ \zeta_{p_m}} \!-\! \sqrt{z''_{p_m} \!\!- \zeta_{p_m}} \, \big) } - e_{t+1} \Gabs{\sqrt{z_{p_{t+1}}' \!\!+ \zeta_{p_{t+1}}} \!-\! \sqrt{z_{p_{t+1}}'' \!\!- \zeta_{p_{t+1}}}\,} \\
  & \ge e_t \textstyle \sqrt{\abs{\zeta_{p_t}}} - 2 e_{t+1} \sqrt{\abs{\zeta_{p_{t+1}}}} \displaystyle \qquad \text{for} \;\; \zeta \in \!\!\bigcap_{m=1}^{t-1} \Gamma^{p_m}(\alpha_m),\;\, z', z'' \in \!\!\bigcap_{m = 1}^{t + 1} D_m(\zeta).
\end{split} \end{equation*}
Observe that there is nothing to show in the case $\zeta_{p_{t+1}} = 0$. Hence we can assume $\zeta_{p_{t+1}} \neq 0$ and write 
\[ e_t \textstyle \sqrt{\abs{\zeta_{p_t}}} - 2 e_{t+1} \textstyle \sqrt{\abs{\zeta_{p_{t+1}}}} = 2e_{t+1} \sqrt{\abs{\zeta_{p_{t+1}}}} \displaystyle \Big( \frac{e_t}{2e_{t+1}} \frac{\sqrt{\abs{\zeta_{p_t}}}}{\sqrt{\abs{\zeta_{p_{t+1}}}}} - 1 \Big). \]
One immediately checks that the term between the brackets is not less than $1/2$ precisely if $\abs{\zeta_{p_{t+1}}} / \abs{\zeta_{p_t}} \le \alpha_t$; hence
\[ e_t \textstyle \sqrt{\abs{\zeta_{p_t}}} - 2e_{t+1} \sqrt{\abs{\zeta_{p_{t+1}}}} \displaystyle \ge e_{t+1} \textstyle \sqrt{\abs{\zeta_{p_{t+1}}}} \qquad \text{for} \;\; \zeta \in \Gamma^{p_t}(\alpha_t). \]
This completes the induction and proves $(\ref{equ_Ht})$. But from Lemma \ref{thm_sqrt} we know that 
\[ \lim_{\zeta \to 0} \frac{\abs{\sqrt{\zeta_{p_T}} - \sqrt{-\zeta_{p_T}}}}{2\norm{\zeta}} = +\infty \qquad \text{on} \;\; \Gamma^{p_T}(\alpha_T), \]
where $\alpha_T \coloneqq \alpha$. Combining this with the estimate $(\ref{equ_Ht})$ in the case $t=T$, we conclude that for every $\nu > 0$ there exists $\delta > 0$ such that
\[ \frac{\gabs{\sum_{m=1}^T e_m \big( \sqrt{z_{p_m}' + \zeta_{p_m}} - \sqrt{z_{p_m}'' - \zeta_{p_m}} \big)}}{2\norm{\zeta}} > \nu \]
for $\zeta \in \bigcap_{m = 1}^{T} \Gamma^{p_m}(\alpha_m) \cap B^{n-1}(0, \delta)$ and $z', z'' \in \bigcap_{m = 1}^{T} D_m(\zeta) = \Delta^{n-1}(0, C\abs{\zeta}_P)$. Since $\alpha \le \alpha_m$ for all $m \in \N_T$ and $\Gamma^p(\alpha) \subset \Gamma^p(\alpha')$ for $\alpha \le \alpha'$, this concludes the proof. Indeed, for $\zeta', \zeta'', z \in \Delta^{n-1}\big(0, (C/2)\abs{\zeta}_P\big)$, the points $z' \coloneqq z + \zeta'$ and $z'' \coloneqq z - \zeta''$ always satisfy $z', z'' \in \Delta^{n-1}(0, C\abs{\zeta}_P)$. 
\end{proof}

We want to estimate the slope between two points of the set $E_\nu$ when their projection to $\C^{n-1}_z$ lies near the zero set of one of the functions $\sqrt{z_{[l]} - a_l}$, $l=1, \ldots, \nu$. For this we need some notations: For every $\nu \in \N$ and $p \in \N_{n-1}$ we define
\[ S_\nu \coloneqq \{ \zeta \in \C^{n-1} : \zeta_{[\nu]} = a_\nu \}, \qquad S^p \coloneqq \{ \zeta \in \C^{n-1} : \zeta_p = 0 \}, \]
and
\[ L_\nu^p \coloneqq \{l \in \N : 1 \le l \le \nu, \, [l] = p \}, \qquad A_\nu^p \coloneqq \{ a_l \in \C : l \in L_\nu^p \}. \]
Obviously $\bigcup_{p=1}^{n-1} L_\nu^p = \N_\nu$. Moreover, if $z \in \C^{n-1}$, we define
\[ L_\nu^p(z) \coloneqq \{ l \in L_\nu^p : z_p = a_l \}. \]
Note that $L_\nu^p(z)$ consists of at most one element. Further, for $P \subset \N_{n-1}$ such that $[\nu] \in P$ and $z \in S_\nu$ we let
\[ \sL_\nu^P(z) \coloneqq \bigcup_{p \in P} L_\nu^p(z). \]
Observe that under the assumptions on $P$ and $z$, we always have $\nu \in \sL_\nu^P(z)$. As mentioned before, the case $\abs{\sL_\nu^P(z)} > 1$ is of special interest and leads us to consider the sets $\bigcap_p \Gamma^p(\alpha)$ for $\alpha > 1$. Here $\alpha$ was claimed to depend on $\{\varepsilon_l\}$, and we now clarify this dependence by the following definition: for every $\nu \in \N$, $P \subset \N_{n-1}$ such that $[\nu] \in P$ and every $z \in S_\nu$, let $\alpha_\nu^P(z)$ be the positive number
\[ \alpha_\nu^P(z) \coloneqq \left\{ \renewcommand{\arraystretch}{1.25} \begin{array}{c@{\quad \text{if }\,}l} \nu + 1 & \sL_\nu^P(z) = \{\nu\} \\ \min \big\{\textstyle \frac{1}{9} \displaystyle (\varepsilon_l / \varepsilon_{l'})^2 : l,l' \in \sL_\nu^P(z), l' > l \big\} & \sL_\nu^P(z) \supsetneq \{\nu\}. \end{array} \renewcommand{\arraystretch}{1} \right. \]
Observe that, since the sequence $\{\varepsilon_l\}$ is still in our hands, we can always assume that $\alpha_\nu^P(z) > 1$. Finally, for each $P \subset \N_{n-1}$ and $\alpha > 0$ we let
\[ \gamma(P,\alpha) \coloneqq \Big( \bigcap_{p \in P} \Gamma^p(\alpha) \Big) \cap \Big( \bigcap_{p \in \complement P} S^p \Big). \]
\begin{lemma} \label{thm_wjwk}
Suppose $\varepsilon_1, \ldots, \varepsilon_\nu$ have already been chosen. Let $\delta > 0$. Then for every $z_0 \in S_\nu$ and $P \subset \N_{n-1}$ such that $[\nu] \in P$, there exist $r^P(z_0) > 0$ and $\delta^P(z_0) \in (0, \delta)$ such that for every $j,k \in \N_{2^\nu}$ the inequality
\begin{equation} \label{equ_wjwk} \frac{\gabs{ w_j^{(\nu)}\big(z + (\zeta + \zeta')\big) - w_k^{(\nu)}\big(z - (\zeta + \zeta'') \big) }}{2\norm{\zeta}} > \nu \end{equation}
holds for every $z \in B^{n-1}\big(z_0, r^P(z_0)\big)$, $\zeta \in \gamma\big(P, \alpha_\nu^P(z_0)\big) \cap \partial B^{n-1}\big(0,\delta^P(z_0)\big)$ and $\zeta', \zeta'' \in \Delta^{n-1}\big(0,(C/2)\abs{\zeta}\big)$; here $\abs{\zeta} = (\abs{\zeta_1}, \ldots, \abs{\zeta_{n-1}})$.
\end{lemma}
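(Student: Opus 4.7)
The plan is to bound $\Delta := w_j^{(\nu)}(z^+) - w_k^{(\nu)}(z^-)$ from below by writing it as
\[
\Delta \;=\; \sum_{l=1}^\nu \varepsilon_l \bigl[\eta_l \sqrt{z^+_{[l]} - a_l} - \eta_l' \sqrt{z^-_{[l]} - a_l}\bigr],
\]
with $z^+ := z + (\zeta + \zeta')$, $z^- := z - (\zeta + \zeta'')$, and sign patterns $\eta_l, \eta_l' \in \{\pm 1\}$ together with branches of $\sqrt{\cdot}$ encoded by $(j,k)$. I partition the index set into the \emph{essential} indices $E := \sL_\nu^P(z_0)$, the \emph{exceptional} indices $X := \{l : [l] \notin P,\, z_{0,[l]} = a_l\}$ (for which $\zeta_{[l]} = 0$ identically), and the \emph{regular} indices $R := \{l : z_{0,[l]} \ne a_l\}$. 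For each $l \in R$, I fix a holomorphic branch of $\sqrt{z_{[l]} - a_l}$ near $z_{0,[l]}$ and choose $r^P(z_0)$ small enough that these branches are defined and bounded on $\overline{B^{n-1}(z_0, r^P(z_0))}$ and that $r^P(z_0) \le \tfrac{1}{8} c_\alpha\, \delta^P(z_0)$, where $c_\alpha > 0$ is the cone constant ensuring $|\zeta_{[l]}| \ge c_\alpha\,\delta^P(z_0)$ for every $l \in E$ and every $\zeta \in \gamma(P, \alpha_\nu^P(z_0)) \cap \partial B^{n-1}(0, \delta^P(z_0))$.

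The key observation that delivers a uniform bound over all sign patterns is the branch-independent identity $|\sqrt{1} \pm \sqrt{-1}| = |\pm 1 \pm i| = \sqrt{2}$. Repeating the proof of Lemma \ref{thm_defC} with ``$+$'' in place of ``$-$'' yields the analogous estimate $\sqrt{|\zeta|} \le |\sqrt{z+\zeta} + \sqrt{z'-\zeta}| \le 2\sqrt{|\zeta|}$ whenever $|z|, |z'| \le C|\zeta|$. Consequently the inductive argument of Lemma \ref{thm_cones} goes through verbatim when the individual summands appear in either of the forms $\sqrt{\cdot+} \pm \sqrt{\cdot-}$ and carry arbitrary signs, because only the magnitudes of the summands enter its triangle-inequality steps. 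Applied with $e_m = \varepsilon_{l_m}$ for an enumeration of $E$, this gives
\[
\Bigl|\sum_{l \in E} \varepsilon_l\bigl[\eta_l \sqrt{z^+_{[l]} - a_l} - \eta_l' \sqrt{z^-_{[l]} - a_l}\bigr]\Bigr| > 2M\|\zeta\|
\]
for any prescribed $M > 0$, provided $\delta^P(z_0)$ is taken small enough.

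It remains to control the other two contributions and combine. For $l \in X$, the identities $\zeta_{[l]} = \zeta'_{[l]} = \zeta''_{[l]} = 0$ make the $l$-term equal to $0$ or $\pm 2\varepsilon_l\sqrt{z_{[l]} - a_l}$, hence of modulus at most $2\varepsilon_l \sqrt{r^P(z_0)}$. For $l \in R$, Taylor expansion of the holomorphic branch yields $T_l = (\eta_l - \eta_l')\varepsilon_l \sqrt{z_{[l]} - a_l} + O(\|\zeta\|)$. Setting $D := \{l : \eta_l \ne \eta_l'\}$ and $K := \sum_{l \in D \cap R} (\eta_l - \eta_l') \varepsilon_l \sqrt{z_{0,[l]} - a_l}$, I split into two cases. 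If $K \ne 0$ then, since $(\eta,\eta')$ ranges over the finite set $\{\pm 1\}^{2\nu}$, the minimum nonzero value $K_*$ of $|K|$ is strictly positive; combining $|\textstyle\sum_E|,\;|\textstyle\sum_X| = O\bigl(\sqrt{\delta^P(z_0)} + \sqrt{r^P(z_0)}\bigr)$ with $|\textstyle\sum_R| \ge K_* - O\bigl(\sqrt{\delta^P(z_0)} + r^P(z_0)\bigr)$ gives $|\Delta| \ge K_*/2 > 2\nu\|\zeta\|$ for small thresholds. If $K = 0$, the $R$-contribution reduces to its $O(\|\zeta\|)$ tail and the $X$-contribution is $O(\sqrt{r^P(z_0)})$; choosing $M$ larger than $\nu$ plus a suitable constant in the $E$-bound and $r^P(z_0) \le (\delta^P(z_0))^2$ yields $|\Delta| > 2\nu\|\zeta\|$.

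The principal obstacle is the clean verification that Lemma \ref{thm_cones}'s induction extends to arbitrary sign/branch combinations — once the identity $|\sqrt{1} \pm \sqrt{-1}| = \sqrt{2}$ is noted this reduces to bookkeeping, but care is needed to confirm that the three compatibility conditions on the thresholds (Lemma \ref{thm_defC}'s smallness hypothesis for each $l \in E$, smallness of $r^P(z_0)$ relative to $\delta^P(z_0)$, and smallness of $\delta^P(z_0)$ required by Lemma \ref{thm_cones}) can all be satisfied simultaneously.
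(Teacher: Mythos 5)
Your argument is correct and follows essentially the same route as the paper: your splitting into essential indices $E=\sL_\nu^P(z_0)$ versus the rest is the paper's decomposition $w_h^{(\nu)}=\tilde w_h+\sum_{l\in\sL_\nu^P(z_0)}\pm\varepsilon_l\sqrt{z_{[l]}-a_l}$, your dichotomy $K\neq 0$ versus $K=0$ is exactly the paper's split into $(j,k)\notin\N_{2^\nu}^2(z_0)$ versus $(j,k)\in\N_{2^\nu}^2(z_0)$, and the sign-independence of Lemmas \ref{thm_defC} and \ref{thm_cones} that you invoke is precisely the paper's convention (stated in the Remark at the start of Section 3) that these estimates hold for every single-valued branch of $\sqrt{\cdot}$, since a sign change is absorbed into the branch choice. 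The only cosmetic point is that your margin $r^P(z_0)\le\tfrac18 c_\alpha\,\delta^P(z_0)$ should read $r^P(z_0)\le\tfrac{C}{2}c_\alpha\,\delta^P(z_0)$ to guarantee the hypothesis $z\in\Delta^{n-1}\bigl(z_0,(C/2)\abs{\zeta}_P\bigr)$ of Lemma \ref{thm_cones}, which is how the paper arranges it in its Step 3.
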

\begin{proof}
Fix $z_0 \in S_\nu$ and $P \subset \N_{n-1}$ such that $[\nu] \in P$. For each $p \in \N_{n-1}$, let $U_p \subset \C$ be an open convex neighbourhood of $z_{0,p}$ such that 
\[ U_p \cap A_\nu^p = \left\{ \begin{array}{c@{\quad \text{if }\,}l} \varnothing & L_\nu^p(z_0) = \varnothing \\ \{z_{0,p}\} & L_\nu^p(z_0) \neq \varnothing \end{array} \right. \]
and let $U \coloneqq U_1 \times \cdots \times U_{n-1}$. Choose $r > 0$ so small that $B^{n-1}(z_0,2r) \subset U$. For each $l \in \N_\nu$, consider a single-valued branch of the multi-valued function $\sqrt{z_{[l]} - a_l}$ which will also be denoted here by $\sqrt{z_{[l]} - a_l}$. Since for every $l \in \N_\nu \setminus \bigcup_{p=1}^{n-1} L_\nu^p(z_0)$ we have $a_l \notin U_{[l]}$, we can assume that $\sqrt{z_{[l]} - a_l}$ is holomorphic on $U$ for these $l$. After possibly changing the numeration of the roots of $P_\nu(z, \,\cdot\,)$ for $z \in U$, we may further assume for every $h \in \N_{2^\nu}$ that $w_h^{(\nu)}(z) = \sum_{l=1}^\nu \pm \varepsilon_l \sqrt{z_{[l]} - a_l}$ on $B^{n-1}(z_0, 2r)$ for suitably chosen signs depending only on $l$ and $h$. Now define $\tilde{w}_h \colon B^{n-1}(z_0, 2r) \to \C$ as
\begin{equation} \label{equ_wh}
\tilde{w}_h(z) \coloneqq \sum_{p \in P} \; \sum_{l \in L_\nu^p \setminus L_\nu^p(z_0)} \!\! \pm \varepsilon_l \sqrt{z_{[l]} - a_l} \,\, + \sum_{p \in \complement P} \sum_{l \in L_\nu^p} \pm \varepsilon_l \sqrt{z_{[l]} - a_l}\,.
\end{equation}
Since $\N_\nu = \bigcup_{p=1}^{n-1} L_\nu^p$, we obviously have $w_h^{(\nu)}(z) = \tilde{w}_h(z) + \sum_{l \in \sL_\nu^P(z_0)} \pm \varepsilon_l \sqrt{z_{[l]} - a_l}$ on $B^{n-1}(z_0,2r)$. Let $\N_{2^\nu}^2 \coloneqq \N_{2^\nu} \times \N_{2^\nu}$ and $\N_{2^\nu}^2(z_0) \coloneqq \{(j,k) \in \N_{2^\nu}^2 : \tilde{w}_j(z_0) = \tilde{w}_k(z_0) \}$.

\noindent \textsc{Step 1}: We show that there exist $r' > 0$ and $\delta' \in (0,\delta)$ such that $(\ref{equ_wjwk})$ holds for every $\zeta \in B^{n-1}(0,\delta')$, $\zeta', \zeta'' \in \Delta^{n-1}\big(0,(C/2)\abs{\zeta}\big)$, $z \in B^{n-1}(z_0, r')$ and $(j,k) \in \N_{2^\nu}^2 \setminus \N_{2^\nu}^2(z_0)$.

For $l \in L_\nu^p(z_0)$, we have $z_{0,[l]} = a_l$ and $\sqrt{\cdot}$ is continuous at the origin; hence we conclude from $(\ref{equ_wh})$ and the holomorphicity of $\sqrt{z_{[l]} - a_l}$ for $l \in L_\nu^p \setminus L_\nu^p(z_0)$ that $\tilde{w}_h$ is continuous at $z_0$ for every $h \in \N_{2^\nu}$. Thus there exist $M > 0$ and $r_1 > 0$ such that $\abs{\tilde{w}_j(z + (\zeta + \zeta')) - \tilde{w}_k(z - (\zeta + \zeta''))} > M$ for every $\zeta \in B^{n-1}(0,r_1)$, $\zeta', \zeta'' \in \Delta^{n-1}\big(0,(C/2)\abs{\zeta}\big)$, $z \in B^{n-1}(z_0,r_1)$ and $(j,k) \in \N_{2^\nu}^2 \setminus \N_{2^\nu}^2(z_0)$. Moreover, since again $z_{0,[l]} = a_l$ for $l \in \sL_\nu^P(z_0)$ and $\sqrt{\cdot}$ is continuous at the origin, there exists $r_2 > 0$ such that $\sqrt{\abs{(z_{[l]} \pm (\zeta_{[l]} + \tilde{\zeta}_{[l]})) - a_l}} < M / \big(4(n-1)\varepsilon_l\big)$, where $\tilde{\zeta} \in \{\zeta', \zeta''\}$, for every $\zeta \in B^{n-1}(0,r_2)$, $\zeta', \zeta'' \in \Delta^{n-1}\big(0,(C/2)\abs{\zeta}\big)$, $z \in B^{n-1}(z_0,r_2)$ and $l \in \sL_\nu^P(z_0)$. Let $r' \coloneqq \min \{r, r_1, r_2\}$ and $\delta' \coloneqq \min \{\delta, r, r_1, r_2, M/4\nu\}$. Then the following estimate holds true for every $\zeta \in B^{n-1}(0, \delta')$, $\zeta', \zeta'' \in \Delta^{n-1}\big(0,(C/2)\abs{\zeta}\big)$, $z \in B^{n-1}(z_0, r')$ and $(j,k) \in \N_{2^\nu}^2 \setminus \N_{2^\nu}^2(z_0)$:
\begin{eqnarray*}
      \lefteqn{ \frac{\gabs{w_j^{(\nu)}\big(z + \!(\zeta + \zeta')\big) - w_k^{(\nu)}\big(z - (\zeta + \zeta'')\big)}}{2\norm{\zeta}} \ge \frac{\gabs{\tilde{w}_j\big(z + \! (\zeta + \zeta')\big) - \tilde{w}_k\big(z - (\zeta + \zeta'')\big)}}{2\norm{\zeta}} \,- }\\
&& - \frac{ \sum_{l \in \sL_\nu^P(z_0)} \varepsilon_l \big( \sqrt{\gabs{\big(z_{[l]} + (\zeta_{[l]} + \zeta_{[l]}')\big) - a_l}} + \sqrt{\gabs{\big(z_{[l]} - (\zeta_{[l]} + \zeta_{[l]}'') \big) - a_l}} \, \big)}{2\norm{\zeta}} \hspace{8ex} \\ 
&& > \frac{M - \sum_{l \in \sL_\nu^P(z_0)} 2 \varepsilon_l M/\big(4(n-1)\varepsilon_l\big)}{2\norm{\zeta}} \ge \frac{M - M/2}{2\norm{\zeta}} > \nu.
\end{eqnarray*}

\noindent \textsc{Step 2}: We show that there exist $r'' \in (0, r')$ and $\delta'' \in (0, \delta')$ such that $(\ref{equ_wjwk})$ holds for every $\zeta \in \gamma\big(P,\alpha_\nu^P(z_0)\big) \cap K^{n-1}(\delta''/2, \delta'')$, $\zeta', \zeta'' \in \Delta^{n-1}\big(0, (C/2) \abs{\zeta}\big)$, $z \in \Delta^{n-1}\big(z_0, (C/2)\abs{\zeta}_P\big) \cap B^{n-1}(z_0, r'')$ and $(j,k) \in \N_{2^\nu}^2(z_0)$, where for $R_1, R_2 \ge 0$ we put $K^{n-1}(R_1, R_2) \coloneqq \{z \in \C^{n-1} : R_1 < \norm{z} < R_2 \}$.

Observe that the first term in $(\ref{equ_wh})$ is holomorphic in $B^{n-1}(z_0,2r)$ and the second term is constant on the set $z_0 + \bigcap_{p \in \complement P} S^p$. Therefore we can find $M > 0$ and $\tilde{r} > 0$ such that
\[ \frac{\gabs{\tilde{w}_j\big(z_0 + (\zeta + \zeta')\big) - \tilde{w}_k\big(z_0 - (\zeta + \zeta'')\big)}}{2\norm{\zeta}} < M \]
for all $\zeta \in \big( \bigcap\nolimits_{p \in \complement P} S^p \big) \cap B^{n-1}(0, \tilde{r})$, $\zeta', \zeta'' \in \Delta^{n-1}\big(0, (C/2) \abs{\zeta}\big)$ and $(j,k) \in \N_{2^\nu}^2(z_0)$. Moreover, since $z_{0,[l]} = a_l$ for every $l \in \sL_\nu^P(z_0)$, we have $\sqrt{\big(z_{[l]} \pm (\zeta_{[l]} + \tilde{\zeta}_{[l]})\big) - a_l} = \sqrt{(z_{[l]} - z_{0,[l]}) \pm (\zeta_{[l]} + \tilde{\zeta}_{[l]})}$, where $\tilde{\zeta} \in \{\zeta', \zeta''\}$. Hence, using Lemma \ref{thm_defC} and \ref{thm_sqrt} if $\sL_\nu^P(z_0) = \{\nu\}$ and Lemma \ref{thm_cones} if $\sL_\nu^P(z_0) \supsetneq \{\nu\}$, there exists $\tilde{\delta} > 0$ such that 
\[ \frac{\gabs{\sum_{l \in \sL_\nu^P(z_0)} \varepsilon_l \Big( \sqrt{\big(z_{[l]} + (\zeta_{[l]} + \zeta_{[l]}')\big) - a_l} - \sqrt{\big(z_{[l]} - (\zeta_{[l]} + \zeta_{[l]}'')\big) - a_l}\, \Big)}}{2\norm{\zeta}} > \nu + M \]
for all $\zeta \in \big[ \bigcap_{p \in P}\Gamma^{p}\big(\alpha_\nu^P(z_0)\big) \big] \cap B^{n-1}(0, \tilde{\delta})$, $\zeta', \zeta'' \in \Delta^{n-1}\big(0, (C/2)\abs{\zeta}_P\big)$ and $z \in \Delta^{n-1}\big(z_0, (C/2)\abs{\zeta}_P\big)$ (recall the definition of $\alpha_\nu^P(z_0)$). Now choose $\delta''$ such that $0 < \delta'' < \min\{\tilde{r}, \tilde{\delta}, \delta' \}$. Observe that $\tilde{w}_h$ is continuous in $z_0 + \big[ \big( \bigcap_{p \in \complement P} S^p \big) \cap B^{n-1}(0, 2r) \big]$ for every $h \in \N_{2^\nu}$. Hence there exists $r'' \in (0, r')$ such that the following estimate holds true for every $\zeta \in \big( \bigcap\nolimits_{p \in \complement P} S^p \big) \cap K^{n-1}(\delta''/2, \delta'')$, $\zeta', \zeta'' \in \Delta^{n-1}\big(0, (C/2) \abs{\zeta}\big)$, $z \in B^{n-1}(z_0, r'')$ and $(j,k) \in \N_{2^\nu}^2(z_0)$:
\[ \frac{\gabs{\tilde{w}_j\big(z + (\zeta + \zeta')\big) - \tilde{w}_k\big(z - (\zeta + \zeta'')\big)}}{2\norm{\zeta}} < M. \]
Thus for every $\zeta \in \gamma\big(P,\alpha_\nu^P(z_0)\big) \cap K^{n-1}(\delta''/2, \delta'')$, $\zeta', \zeta'' \in \Delta^{n-1}\big(0, (C/2) \abs{\zeta}\big)$, $z \in \Delta^{n-1}\big(z_0, (C/2)\abs{\zeta}_P\big) \cap B^{n-1}(z_0, r'')$ and $(j,k) \in \N_{2^\nu}^2(z_0)$ we get
\begin{equation*} \begin{split}
 & \frac{\abs{w_j^{(\nu)}\big(z \!+\! (\zeta + \zeta')\big) - w_k^{(\nu)}\big(z \!-\! (\zeta + \zeta'')\big)}}{2\norm{\zeta}} \\
 & \ge \frac{\Gabs{\sum_{l \in \sL_\nu^P(z_0)} \varepsilon_l \Big( \sqrt{\big(z_{[l]} + (\zeta_{[l]} + \zeta_{[l]}')\big) - a_l} - \sqrt{\big(z_{[l]} - (\zeta_{[l]} + \zeta_{[l]}'')\big) - a_l}\, \Big)}}{2\norm{\zeta}} - \\
 & - \frac{\gabs{\tilde{w}_j\big(z + (\zeta + \zeta')\big) - \tilde{w}_k\big(z - (\zeta + \zeta'')\big)}}{2\norm{\zeta}} > \nu.
\end{split} \end{equation*}

\noindent \textsc{Step 3}: We show that there exist $r^P(z_0) > 0$ and $\delta^P(z_0) \in (0,
\delta)$ such that $(\ref{equ_wjwk})$ holds for every $\zeta \in \gamma\big(P, \alpha_\nu^P(z_0)\big) \cap \partial B^{n-1}\big(0,\delta^P(z_0)\big)$, $\zeta', \zeta'' \in \Delta^{n-1}\big(0,(C/2)\abs{\zeta}\big)$, $z \in B^{n-1}\big(z_0, r^P(z_0)\big)$ and $j,k \in \N_{2^\nu}$.

We already know that $(\ref{equ_wjwk})$ holds for every $\zeta \in \gamma\big(P,\alpha_\nu^P(z_0)\big) \cap K^{n-1}(\delta''/2, \delta'')$, $\zeta', \zeta'' \in \Delta^{n-1}\big(0, (C/2) \abs{\zeta}\big)$, $z \in \Delta^{n-1}\big(z_0, (C/2)\abs{\zeta}_P\big) \cap B^{n-1}(z_0, r'')$ and $j,k \in \N_{2^\nu}$. It only remains to make proper choices for the constants $r^P(z_0)$ and $\delta^P(z_0)$. First, choose any $\delta^P(z_0)$ such that $\delta'' > \delta^P(z_0) > \delta''/2$. Then there exists $K > 0$ such that
\[ \abs{\zeta_p} > K \quad \text{for all} \quad \zeta \in \gamma\big(P, \alpha_\nu^P(z_0)\big) \cap \partial B^{n-1}\big(0, \delta^P(z_0)\big), \, p \in P. \]
Indeed, let $p \in P$. Then for $\zeta \in \gamma\big(P, \alpha_\nu^P(z_0)\big)$ we have in particular $\zeta \in \Gamma^p\big(\alpha_\nu^P(z_0)\big)$ and hence $\abs{\zeta_q} / \abs{\zeta_p} < \alpha_\nu^P(z_0)$ for every $q \in  \N_{n-1}$ (assuming that $\alpha_\nu^P(z_0) > 1$, which is the only interesting case). Thus $\norm{\zeta} < \alpha_\nu^P(z_0) \sqrt{n-1} \, \abs{\zeta_p}$. Since also $\zeta \in  \partial B^{n-1}\big(0, \delta^P(z_0)\big)$, we conclude that $\abs{\zeta_p} > \delta^P(z_0) / \big(\alpha_\nu^P(z_0) \sqrt{n-1}\,\big) \eqqcolon K$. Now choose $\rho > 0$ such that $\abs{z_p - z_{0,p}} < (CK)/2$ for all $z \in B^{n-1}(z_0, \rho)$ and $p \in P$, i.e., $B^{n-1}(z_0, \rho) \subset \Delta^{n-1}\big(z_0, (C/2)\abs{\zeta}_P\big)$ for all $\zeta \in \gamma\big(P, \alpha_\nu^P(z_0)\big) \cap \partial B^{n-1}\big(0, \delta^P(z_0)\big)$. Then $r^P(z_0) \coloneqq \min\{r'', \rho\}$ is a desired constant. 
\end{proof}

Fix $\nu \in \N$. By the previous lemma we have assigned positive numbers $r^P(z_0)$, $\delta^P(z_0)$ to every $z_0 \in S_\nu$. As we shall see in the proof of Lemma \ref{thm_choiceeps}, the choice of $\varepsilon_{\nu+1}$ will depend on the numbers $\delta^P(z_0)$, $z_0 \in S_\nu$; in fact, we will need a positive lower bound for the set $\{\delta^P(z_0) : z_0 \in S_\nu\}$. However, such a bound does not always exist. Hence from now on we restrict our attention to the compact subset $S_\nu \cap \Bbarnn(0,\nu)$ of $S_\nu$. This set can be covered by finitely many balls $B^{n-1}\big(z_0, r^P(z_0)\big)$, $z_0 \in S_\nu$, and thus leads to a finite set $\{\delta^P(z_1), \ldots, \delta^P(z_m)\} \subset (0, \infty)$ (which of course has a positive minimum). On the way, we have to choose the numbers $r^P(z_0)$ in the covering $\big\{B^{n-1}\big(z_0,r^P(z_0)\big)\big\}_{z_0 \in S_\nu}$ small enough in order to limit the influence of points $z_0 \in S_\nu$ with small value $\alpha^P_\nu(z_0)$. For this purpose, we need some further notations: Fix a decreasing sequence $\{\rho_\nu\}$ of positive numbers converging to zero, such that
\[ \max_{1 \le p \le n-1} \vol \Big( \bigcup_{l \in L_\nu^p} \Delta^1(a_l,\rho_\nu) \Big) \to 0 \quad \text{for} \quad \nu \to \infty. \] 
Then for every $\nu \in \N$, $p \in \N_{n-1}$ and $z \in \C^{n-1}$ we let
\[ \tilde{L}_\nu^p(z) \coloneqq \{ l \in L_\nu^p : \abs{z_p - a_l} \le \rho_\nu \}. \]
Moreover if $z \in S_\nu$ and $P \subset \N_{n-1}$ such that $[\nu] \in P$  we let
\[ \tilde{\sL}_\nu^P(z) \coloneqq \bigcup_{p \in P} \tilde{L}_\nu^p(z). \]
Note that under the assumptions on $P$ and $z$ we always have $\nu \in \tilde{\sL}_\nu^P(z)$. Hence
\[ \tilde{\alpha}_\nu^P(z) \coloneqq \left\{ \renewcommand{\arraystretch}{1.25} \begin{array}{c@{\quad \text{if }\,}l} \nu + 1 & \tilde{\sL}_\nu^P(z) = \{\nu\} \\ \min \big\{ \nu +1, \min \{\textstyle \frac{1}{9} \displaystyle (\varepsilon_l / \varepsilon_{l'})^2 : l,l' \in \tilde{\sL}_\nu^P(z), l' > l \} \big\} & \tilde{\sL}_\nu^P(z) \supsetneq \{\nu\} \end{array} \renewcommand{\arraystretch}{1} \right. \]
is a well-defined positive number.

\begin{corollary} \label{thm_mainest}
Suppose $\varepsilon_1, \ldots, \varepsilon_\nu$ have already been chosen. Let $\delta > 0$. Then there exists a finite subset $D_\nu \coloneqq \{\delta_\nu^1, \ldots, \delta_\nu^{d_\nu}\} \subset (0,\delta)$ such that for every $z \in S_\nu \cap B^{n-1}(0,\nu)$ and $P \subset \N_{n-1}$ such that $[\nu] \in P$, there exists some $\sigma \in \{1, \ldots, d_\nu \}$ such that for every $j,k \in \N_{2^\nu}$ the inequality
\begin{equation} \label{equ_mainest}
\frac{\gabs{ w_j^{(\nu)}\big(z + (\zeta + \zeta')\big) - w_k^{(\nu)}\big(z - (\zeta + \zeta'') \big) }}{2\norm{\zeta}} > \nu
\end{equation}
holds true for all $\zeta \in \gamma\big(P, \tilde{\alpha}_\nu^P(z) \big) \cap \partial B^{n-1}(0, \delta_\nu^\sigma)$ and $\zeta', \zeta'' \in \Delta^{n-1}\big(0, (C/2)\abs{\zeta}\big)$.
\end{corollary}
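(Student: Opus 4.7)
The plan is to reduce the Corollary to finitely many applications of Lemma~\ref{thm_wjwk} via a covering argument on the compact set $S_\nu \cap \Bbarnn(0,\nu)$, and then to reconcile the two ``alpha''s involved. The Corollary demands (\ref{equ_mainest}) on $\gamma\bigl(P, \tilde\alpha_\nu^P(z)\bigr)$, whereas Lemma~\ref{thm_wjwk} only provides (\ref{equ_wjwk}) on $\gamma\bigl(P, \alpha_\nu^P(z_0)\bigr)$ for some point $z_0$ near $z$. The key observation I plan to exploit is that by shrinking the covering radii below $\rho_\nu$ one forces $\tilde\alpha_\nu^P(z) \le \alpha_\nu^P(z_0)$, and the monotonicity of $\Gamma^p(\cdot)$ in its parameter then yields $\gamma\bigl(P, \tilde\alpha_\nu^P(z)\bigr) \subseteq \gamma\bigl(P, \alpha_\nu^P(z_0)\bigr)$, which is precisely what is needed to transfer the estimate.

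In the actual argument I will first apply Lemma~\ref{thm_wjwk} at every $z_0 \in S_\nu \cap \Bbarnn(0,\nu)$ and every admissible $P \subset \N_{n-1}$ (only finitely many, since $[\nu] \in P$), obtaining numbers $r^P(z_0), \delta^P(z_0) \in (0,\delta)$. For each such $z_0$ I then define $r(z_0) \coloneqq \min\bigl\{\rho_\nu,\, \min_{P} r^P(z_0)\bigr\}$. By compactness I extract a finite subcover $\{B^{n-1}(z_i, r(z_i))\}_{i=1}^m$ of $S_\nu \cap \Bbarnn(0,\nu)$ and set
\[
D_\nu \coloneqq \bigl\{ \delta^P(z_i) : 1 \le i \le m,\; P \subset \N_{n-1},\; [\nu] \in P \bigr\} \subset (0,\delta),
\]
which is finite.

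Given $z \in S_\nu \cap B^{n-1}(0,\nu)$ and an admissible $P$, I will pick $i$ with $z \in B^{n-1}(z_i, r(z_i))$ and take $\delta_\nu^\sigma \coloneqq \delta^P(z_i)$. Since $\abs{z_p - z_{i,p}} < r(z_i) \le \rho_\nu$ for every $p \in \N_{n-1}$, each $l \in L_\nu^p(z_i)$ (for which $a_l = z_{i,p}$) automatically satisfies $\abs{z_p - a_l} \le \rho_\nu$, hence belongs to $\tilde L_\nu^p(z)$; therefore $\sL_\nu^P(z_i) \subseteq \tilde\sL_\nu^P(z)$. Unwinding the definitions of $\alpha_\nu^P$ and $\tilde\alpha_\nu^P$, this inclusion combined with the cap at $\nu+1$ in the definition of $\tilde\alpha_\nu^P$ forces $\tilde\alpha_\nu^P(z) \le \alpha_\nu^P(z_i)$, so $\gamma\bigl(P, \tilde\alpha_\nu^P(z)\bigr) \subseteq \gamma\bigl(P, \alpha_\nu^P(z_i)\bigr)$. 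Applying Lemma~\ref{thm_wjwk} at centre $z_i$ then supplies (\ref{equ_wjwk}) on $\gamma\bigl(P, \alpha_\nu^P(z_i)\bigr) \cap \partial B^{n-1}\bigl(0, \delta^P(z_i)\bigr)$, which restricts to the desired (\ref{equ_mainest}) on $\gamma\bigl(P, \tilde\alpha_\nu^P(z)\bigr) \cap \partial B^{n-1}(0, \delta_\nu^\sigma)$. The principal obstacle here is conceptual rather than computational: one has to realize that constraining the covering radii by $\rho_\nu$ is exactly what makes the tilde and non-tilde versions of $\alpha$ compatible; once this is noticed, the remainder is standard compactness bookkeeping.
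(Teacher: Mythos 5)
Your proposal is correct and follows essentially the same route as the paper's own proof: apply Lemma~\ref{thm_wjwk} at each point, shrink the covering radii below $\rho_\nu$, extract a finite subcover of the compact set $S_\nu\cap\Bbarnn(0,\nu)$, collect the resulting $\delta^P(z_i)$ into $D_\nu$, and use $\sL_\nu^P(z_i)\subset\tilde\sL_\nu^P(z)$ to get $\tilde\alpha_\nu^P(z)\le\alpha_\nu^P(z_i)$ and hence the inclusion of the cones. The key observation you single out (constraining the radii by $\rho_\nu$ to make the two alphas compatible) is exactly the one the paper uses.
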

\begin{proof}
By the previous lemma, for every $z_0 \in S_\nu$ and $P \subset \N_{n-1}$, $[\nu] \in P$, there exist positive numbers $r^P(z_0) \in (0,\rho_\nu)$ and $\delta^P(z_0) \in (0,\delta)$ such that $(\ref{equ_mainest})$ holds for every $j,k \in \N_{2^\nu}$, $z \in B^{n-1}\big(z_0, r^P(z_0)\big)$, $\zeta \in \gamma\big(P, \alpha_\nu^P(z_0)\big) \cap \partial B^{n-1}\big(0,\delta^P(z_0)\big)$ and $\zeta', \zeta'' \in \Delta^{n-1}\big(0,(C/2)\abs{\zeta}\big)$. Let 
\[ r(z_0) \coloneqq \min \big\{ r^P(z_0) : P \subset \N_{n-1} \text{ such that } [\nu] \in P \big\}. \]
By compactness of $S_\nu \cap \Bbarnn(0, \nu)$, there exist finitely many points $z_1, \ldots, z_M \in S_\nu$ such that $S_\nu \cap \Bbarnn(0, \nu) \subset \bigcup_{m=1}^M B^{n-1}\big(z_m, r(z_m)\big)$. Let
\[ D_\nu \coloneqq \big\{ \delta^P(z_m) : P \subset \N_{n-1} \text{ such that } [\nu] \in P, \, m = 1, \ldots, M \big\}. \]
Then for every $z \in S_\nu \cap B^{n-1}(0,\nu)$ and $P \subset \N_{n-1}$, $[\nu] \in P$, there exist $\sigma \in \{1, \ldots, d_\nu \}$ and $m \in \N_M$ such that $\abs{z - z_m} \le \rho_\nu$ and such that $(\ref{equ_mainest})$ holds for every $j,k \in \N_{2^\nu}$, $\zeta \in \gamma\big(P, \alpha_\nu^P(z_m) \big) \cap \partial B^{n-1}(0, \delta_\nu^\sigma)$ and $\zeta', \zeta'' \in \Delta^{n-1}\big(0, (C/2)\abs{\zeta}\big)$. It remains to observe that we herein can replace $\alpha_\nu^P(z_m)$ by $\tilde{\alpha}_\nu^P(z)$. Indeed, since $\abs{z - z_m} \le \rho_\nu$, we have $L_\nu^p(z_m) \subset \tilde{L}_\nu^p(z)$ for all $p \in \N_{n-1}$ and thus $\sL_\nu^P(z_m) \subset \tilde{\sL}_\nu^P(z)$. Recalling the definitions of $\alpha_\nu^P(z_m)$ and $\tilde{\alpha}_\nu^P(z)$, we conclude that $\tilde{\alpha}_\nu^P(z) \le \alpha_\nu^P(z_m)$. In particular, we get $\gamma \big( P, \tilde{\alpha}_\nu^P(z) \big) \subset \gamma \big( P, \alpha_\nu^P(z_m) \big)$.
\end{proof}

We are now able to specify the choice of the sequence $\{\varepsilon_l\}$:

\begin{lemma} \label{thm_choiceeps}
If $\{\varepsilon_l\}$ is decreasing fast enough, then for every fixed $\nu \in \N$ and for every $z \in S_\nu \cap B^{n-1}(0,\nu)$ and $P \subset \N_{n-1}$ such that $[\nu] \in P$, there exists $\delta \in (0, 1/\nu)$ such that
\begin{equation} \label{equ_slopeE}
  \frac{w' - w''}{\norm{\zeta' + 2\zeta + \zeta''}} \ge \frac{\nu - 1}{1 + (C/2)} \quad \text{for all} \quad w' \in \mathcal{E}_{z + (\zeta + \zeta')}, \, w'' \in \mathcal{E}_{z - (\zeta + \zeta'')}
\end{equation}
and all choices of $\zeta \in \gamma\big(P, \tilde{\alpha}_\nu^P(z) \big) \cap \partial B^{n-1}(0,\delta)$ and $\zeta', \zeta'' \in \Delta^{n-1}\big(0, (C/2)\abs{\zeta} \big)$. Moreover, $\frac{1}{9}(\varepsilon_l / \varepsilon_{l+1})^2 > l$ and $\varepsilon_l \sqrt{\abs{z_{[l]} - a_{l}}} <  \frac{1}{2^l}$ on $B^{n-1}(0,l)$.
\end{lemma}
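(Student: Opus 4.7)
My plan is to define $\{\varepsilon_l\}$ inductively so that the slope estimate for $g_\nu$ supplied by Corollary~\ref{thm_mainest} survives the passage to the Hausdorff limit $\mathcal{E}$ of Lemma~\ref{thm_defE}. Fix $\varepsilon_1 > 0$ small enough that $\varepsilon_1 \sqrt{\abs{z_{[1]}-a_1}} < 1/2$ on $B^{n-1}(0,1)$. Having chosen $\varepsilon_1, \dots, \varepsilon_\nu$, I apply Corollary~\ref{thm_mainest} with $\delta = 1/\nu$ to obtain a finite set $D_\nu = \{\delta_\nu^1, \dots, \delta_\nu^{d_\nu}\} \subset (0, 1/\nu)$ and set $\delta_\nu^{\min} \coloneqq \min D_\nu > 0$. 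Then I pick $\varepsilon_{\nu+1} > 0$ so small that the following hold simultaneously: \emph{(i)} $\frac{1}{9}(\varepsilon_\nu/\varepsilon_{\nu+1})^2 > \nu$, \emph{(ii)} $\varepsilon_{\nu+1} \sqrt{\abs{z_{[\nu+1]}-a_{\nu+1}}} < 1/2^{\nu+1}$ on $B^{n-1}(0, \nu+1)$, and \emph{(iii)} $\varepsilon_{\nu+1} \sqrt{\abs{z_{[\nu+1]}-a_{\nu+1}}} < \delta_\mu^{\min}/2^{\nu+1}$ on $B^{n-1}(0, \nu+1)$ for every $\mu \in \N_\nu$. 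Each of these amounts to an explicit positive upper bound on $\varepsilon_{\nu+1}$, so the recursion goes through, and the last two (side) assertions of the statement are immediate from \emph{(i)} and \emph{(ii)}.

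To prove the main inequality I fix $\nu$, $z \in S_\nu \cap B^{n-1}(0,\nu)$ and $P \subset \N_{n-1}$ with $[\nu] \in P$, and invoke Corollary~\ref{thm_mainest} to produce $\sigma$ such that $\delta \coloneqq \delta_\nu^\sigma \in (0, 1/\nu)$ satisfies
\[ \gabs{w_j^{(\nu)}\bigl(z + (\zeta+\zeta')\bigr) - w_k^{(\nu)}\bigl(z - (\zeta+\zeta'')\bigr)} > 2\nu\norm{\zeta} \]
for all admissible $\zeta, \zeta', \zeta''$ and all $j,k \in \N_{2^\nu}$. Since $\norm{\zeta} = \delta < 1/\nu$ and the componentwise bounds give $\norm{\zeta'}, \norm{\zeta''} \le (C/2)\norm{\zeta}$, each argument $z \pm (\zeta + \tilde{\zeta})$ lies in $\Bbarnn(0, \nu+1)$. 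Repeating the key computation from the proof of Lemma~\ref{thm_defE}, for any $z_0 \in \Bbarnn(0, \nu+1)$ and any $w^\ast \in \mathcal{E}_{z_0}$ there is $k \in \N_{2^\nu}$ with $\abs{w^\ast - w_k^{(\nu)}(z_0)} \le \eta_\nu$, where $\eta_\nu \coloneqq \sum_{l > \nu} \varepsilon_l \sqrt{\abs{z_{0,[l]}-a_l}}$: write $w^\ast = \lim_\mu w_{j_\mu}^{(\nu+\mu)}(z_0)$, decompose $w_{j_\mu}^{(\nu+\mu)}(z_0) = w_{k_\mu}^{(\nu)}(z_0) + \sum_{l=\nu+1}^{\nu+\mu} \pm \varepsilon_l \sqrt{z_{0,[l]}-a_l}$, and pass to a constant-$k$ subsequence. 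Condition \emph{(iii)} then yields $\eta_\nu \le \sum_{l > \nu} \delta_\nu^{\min}/2^l = \delta_\nu^{\min}/2^\nu$.

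Applying this approximation to both $w' \in \mathcal{E}_{z+(\zeta+\zeta')}$ and $w'' \in \mathcal{E}_{z-(\zeta+\zeta'')}$ and combining with the corollary produces
\[ \abs{w' - w''} > 2\nu\norm{\zeta} - 2\eta_\nu \ge 2\norm{\zeta}\bigl(\nu - 1/2^\nu\bigr) \ge 2(\nu-1)\norm{\zeta}, \]
where I used $\norm{\zeta} = \delta_\nu^\sigma \ge \delta_\nu^{\min}$. The denominator is handled by the triangle inequality together with $\norm{\zeta'}, \norm{\zeta''} \le (C/2)\norm{\zeta}$, giving $\norm{\zeta' + 2\zeta + \zeta''} \le (2+C)\norm{\zeta}$; dividing yields the target lower bound $\frac{\nu-1}{1+C/2}$ (with the absolute value understood in the numerator on the left, which is the only sensible reading).

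The principal obstacle is the bookkeeping: condition \emph{(iii)} ties $\varepsilon_{\nu+1}$ to all previously determined numbers $\delta_\mu^{\min}$, each of which is only available once $\varepsilon_1, \dots, \varepsilon_\nu$ have been fixed via Corollary~\ref{thm_mainest}. With the order of operations set up as above, each inductive step only imposes finitely many explicit positive upper bounds on $\varepsilon_{\nu+1}$, so the construction is self-consistent and produces the desired sequence.
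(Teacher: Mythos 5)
Your proposal is correct and follows essentially the same route as the paper: an interleaved induction in which $\varepsilon_{l+1}$ is chosen after the finite sets $D_1,\dots,D_l$ from Corollary \ref{thm_mainest}, subject to the tail bound $\varepsilon_{l+1}\sqrt{\abs{z_{[l+1]}-a_{l+1}}}<2^{-(l+1)}\min D_\mu$, followed by combining the corollary's slope estimate with the geometric tail and the triangle inequality $\norm{\zeta'+2\zeta+\zeta''}\le(2+C)\norm{\zeta}$. The only (immaterial) differences are that the paper estimates $w^{(\mu)}_{j'}$ directly and passes to the limit at the end rather than first approximating points of $\mathcal{E}$ by roots of $P_\nu$, and that it imposes the tail bound on the slightly larger ball $B^{n-1}(0,l+2)$, which avoids even the harmless $\nu=1$ edge case in your containment $z^\pm\in\Bbarnn(0,\nu+1)$.
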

\begin{proof}
We proceed by induction on $l$ and simultaneously choose a sequence $(D_l)$ of finite subsets $D_l = \{\delta_l^1, \ldots, \delta_l^{d_l} \} \subset (0,1/l)$ such that $\varepsilon_l \sqrt{\abs{z_{[l]} - a_{l}}} <  \frac{1}{2^l} \min \{ \delta' \in D_\nu : 1 \le \nu \le l-1\}$ for every $z \in B^{n-1}(0, l+1)$. First let $\varepsilon_1 \coloneqq 1$ and let $D_1 \subset (0,1)$ be the set provided by Corollary \ref{thm_mainest} in the case $\nu , \delta = 1$. If $\varepsilon_1, \ldots, \varepsilon_l$ and $D_1, \ldots, D_l$ have already been chosen, we choose $\varepsilon_{l+1} > 0$ so small that $\frac{1}{9}(\varepsilon_l / \varepsilon_{l+1})^2 > l$ and $\varepsilon_{l+1} \sqrt{\abs{z_{[l+1]} - a_{l+1}}} <  \frac{1}{2^{l+1}} \min \{ \delta' \in D_\nu : 1 \le \nu \le l\}$ for $z \in B^{n-1}(0, l+2)$. Observe that every $\varepsilon_{l+1}' \in (0, \varepsilon_{l+1})$ would also be a proper choice for $\varepsilon_{l+1}$. We then take for $D_{l+1}$ the set provided by Corollary \ref{thm_mainest} in the case $\nu = l+1$ and $\delta = 1/(l+1)$.

Fix $\nu \in \N$, $z \in S_\nu \cap B^{n-1}(0,\nu)$ and $P \subset \N_{n-1}$ such that $[\nu] \in P$. Then by choice of $D_\nu$, there exists $\delta \in D_\nu$ such that estimate $(\ref{equ_mainest})$ holds true for all $j,k \in \N_{2^\nu}$ and all considered $\zeta, \zeta', \zeta''$. By choice of the sequence $(\varepsilon_l)$, if for abbrevation we write $z^+ \coloneqq z + (\zeta + \zeta')$ and $z^- \coloneqq z - (\zeta + \zeta'')$, we thus get the following estimate for all $\mu > \nu$ and $j',k' \in \N_{2^\mu}$ (for suitable $j,k \in \N_{2^\nu}$ depending on $j',k'$):
\begin{equation*} \begin{split}
  & \frac{\gabs{ w_{j'}^{(\mu)}\big(z + (\zeta + \zeta') \big) - w_{k'}^{(\mu)}\big(z - (\zeta + \zeta'') \big) }}{\norm{\zeta' + 2\zeta + \zeta''}} \ge \frac{\gabs{ w_{j'}^{(\mu)}(z^+) - w_{k'}^{(\mu)}(z^-) }}{(2+C)\norm{\zeta}} \\ 
  \ge \,& \frac{\gabs{ w_j^{(\nu)}(z^+) - w_k^{(\nu)}(z^-) }}{(2 + C) \norm{\zeta}} \, - \frac{1}{(2 + C) \norm{\zeta}} \sum_{l = \nu +1}^\mu \textstyle \varepsilon_l \Big( \sqrt{\gabs{z^+_{[l]} - a_l}} + \sqrt{\gabs{z^-_{[l]} - a_l}}\, \Big) \\
  \ge \, & \displaystyle  \frac{\nu}{1 + (C/2)} - \frac{1}{(2+C)\,\delta} \sum_{l = \nu + 1}^\mu \frac{\delta}{2^{l-1}} \ge \frac{\nu - 1}{1 + (C/2)}.
\end{split} \end{equation*}
Since by Lemma \ref{thm_defE} each $(z,w) \in \mathcal{E}$ is a limit of points $\big( z, w_{j_\mu}^{(\mu)} \big)$, this proves $(\ref{equ_slopeE})$.
\end{proof}
\begin{lemma} \label{thm_analyticE}
If $\{\varepsilon_l\}$ is decreasing fast enough, then $\mathcal{E}$ contains no analytic variety of positive dimension.
\end{lemma}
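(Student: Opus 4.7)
The plan is to argue by contradiction: if $\mathcal{E}$ contained an analytic variety of positive dimension it would contain a non-constant analytic disc $f = (f_z, f_w) \colon \D \to \mathcal{E}$. If $f_z \equiv z^*$ were constant, then $f_w(\D)$ would be a non-empty open subset (by the open mapping theorem) of $\mathcal{E}_{z^*}$, contradicting the zero-area conclusion of Lemma \ref{thm_defE}(3). So $f_z$ is non-constant and the set $P \coloneqq \{p \in \N_{n-1} : f_{z,p}$ is non-constant$\}$ is non-empty. After restricting $f$ to a small subdisc and reparametrizing, I may assume $0 \in \D$ is a \emph{generic} center: $f_{z,p}'(0) \neq 0$ for every $p \in P$ (the critical set of $f_{z,p}$ is discrete and $P$ is finite) and $f_z(0)_q \neq a_l^q$ for every $l \in \N$ and $q \in P$ (each such equality excludes only a countable set of centers).

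Next I produce a test sequence. Fix $p_0 \in P$; since $f_{z,p_0}$ is a local biholomorphism at $0$, its image covers an open neighbourhood $V$ of $z_{0, p_0} \coloneqq f_{z, p_0}(0)$. By density of $\{a_l^{p_0}\}_{l=1}^\infty$ I extract indices $l_k \to \infty$ with $a_{l_k}^{p_0} \in V$ and $a_{l_k}^{p_0} \to z_{0, p_0}$. Put $\nu_k \coloneqq \Phi^{-1}(p_0, l_k)$; then $[\nu_k] = p_0$, $a_{\nu_k} = a_{l_k}^{p_0}$, and $\nu_k \to \infty$. The points $\zeta_k \coloneqq f_{z, p_0}^{-1}(a_{\nu_k}) \to 0$ satisfy $z_k \coloneqq f_z(\zeta_k) \in S_{\nu_k} \cap B^{n-1}(0, \nu_k)$ for all large $k$, so Lemma \ref{thm_choiceeps} supplies $\delta_k \in (0, 1/\nu_k)$ for each such $k$.

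Now I construct admissible parameters from the holomorphic structure of $f$. For $\eta_k \in \C$ small set $\xi_k^+ \coloneqq f_z(\zeta_k + \eta_k) - z_k$, $\xi_k^- \coloneqq z_k - f_z(\zeta_k - \eta_k)$, and $\zeta \coloneqq (\xi_k^+ + \xi_k^-)/2$, $\zeta' \coloneqq (\xi_k^+ - \xi_k^-)/2$, $\zeta'' \coloneqq -\zeta'$, so that $\xi_k^+ = \zeta + \zeta'$ and $\xi_k^- = \zeta + \zeta''$. Taylor expansion yields $\zeta = f_z'(\zeta_k)\eta_k + O(\eta_k^3)$ and $\zeta', \zeta'' = O(\eta_k^2)$, and I pick $\abs{\eta_k}$ so that $\norm{\zeta} = \delta_k$. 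For $q \in \complement P$ the function $f_{z,q}$ is constant, hence $\zeta_q = \zeta'_q = \zeta''_q = 0$, placing $\zeta$ in $\bigcap_{q \in \complement P} S^q$ and trivially satisfying the constraint $\abs{\zeta'_q} < (C/2)\abs{\zeta_q}$ on the $\complement P$-coordinates. For $p \in P$ the ratios $\abs{\zeta_q}/\abs{\zeta_p}$ converge to the finite number $\abs{f_{z,q}'(0)}/\abs{f_{z,p}'(0)}$ as $k \to \infty$, and $\abs{\zeta'_q} = O(\eta_k^2)$ while $\abs{\zeta_q} = \Theta(\abs{\eta_k})$, so the remaining perturbation conditions hold for $k$ large.

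The main obstacle is verifying $\tilde{\alpha}_{\nu_k}^P(z_k) \to \infty$, which is needed to place $\zeta$ in $\bigcap_{p \in P} \Gamma^p\bigl(\tilde{\alpha}_{\nu_k}^P(z_k)\bigr)$. This is where the generic choice of $0$ plays its role: for every fixed $l$ and $q \in P$ (excluding the pair $l = \nu_k$, $q = p_0$) one has $\abs{z_{k, q} - a_l} \to \abs{z_{0, q} - a_l} > 0$, whereas $\rho_{\nu_k} \to 0$; hence any fixed small index is eventually excluded from $\tilde{L}_{\nu_k}^q(z_k)$, so $\min\bigl(\tilde{\sL}_{\nu_k}^P(z_k) \setminus \{\nu_k\}\bigr) \to \infty$. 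Combined with the bound $\frac{1}{9}(\varepsilon_l/\varepsilon_{l'})^2 > l$ from Lemma \ref{thm_choiceeps} this forces $\tilde{\alpha}_{\nu_k}^P(z_k) \to \infty$. Once all admissibility conditions are met, Lemma \ref{thm_choiceeps} applied with $w' \coloneqq f_w(\zeta_k + \eta_k) \in \mathcal{E}_{z_k + \xi_k^+}$ and $w'' \coloneqq f_w(\zeta_k - \eta_k) \in \mathcal{E}_{z_k - \xi_k^-}$ yields
\[\frac{\abs{f_w(\zeta_k + \eta_k) - f_w(\zeta_k - \eta_k)}}{\norm{\xi_k^+ + \xi_k^-}} \ge \frac{\nu_k - 1}{1 + C/2},\]
whose right-hand side tends to $+\infty$. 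But holomorphicity of $f$ forces the left-hand side to tend to $\abs{f_w'(0)}/\norm{f_z'(0)}$, a finite number, which is the desired contradiction.
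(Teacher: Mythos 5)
Your proof is correct and follows essentially the same strategy as the paper: locate parameters on the disc lying over branch points $a_{\nu_k}$ with $[\nu_k] \in P$, feed the resulting symmetric secant data into Lemma \ref{thm_choiceeps}, and contradict the boundedness of the derivative of the $w$-component. The only differences are implementational: you keep the original disc parametrization and use a generic center, where the paper reparametrizes as a graph over the first coordinate and shrinks $U$ to avoid the $\rho_\nu$-discs around the first $\nu_0$ branch points, and you show $\tilde{\alpha}_{\nu_k}^P(z_k) \to \infty$ along the sequence rather than bounding it below by a fixed $\alpha$ --- both devices verify the same cone hypotheses.
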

\begin{proof}
Let $\{\varepsilon_l\}$ be decreasing so fast that the assertions of Lemma \ref{thm_choiceeps} hold true. To get a contradiction, assume that $\mathcal{E}$ contains an analytic variety of positive dimension. Then in particular $\mathcal{E}$ contains a nonconstant analytic disc, i.e., there exists a nonconstant holomorphic mapping $f = (f_1, f_2, \ldots, f_n) \colon \D_r(0) \to \C^n$ such that $f\big(\D_r(0)\big) \subset\mathcal{E}$, where $\D_r(\xi_0) = \{\xi \in \C : \abs{\xi-\xi_0} < r \}$. Let $P \subset \N_{n-1}$ be the set of all coordinate directions in $\C^{n-1}$ such that $f_p$ is not constant. Since by the choice of $\{\varepsilon_l\}$ and Lemma \ref{thm_defE} the set $\mathcal{E}_z$ has zero $2$-dimensional Lebesgue measure for every $z \in \C^{n-1}$, we see that $P \neq \varnothing$. Without loss of generality, we can assume that $P = \{1, \ldots, T\}$ for some $T\le n-1$. After possibly passing to a subset $\D_{r'}(\xi_0) \subset \D_r(0)$, we can assume by the implicit function theorem that there exist an open subset $U \subset \C$ and some
\[ \phi \colon U \to \C^n \text{ holomorphic, }\, \phi(U) = f\big(\D_{r'}(\xi_0)\big) \]
such that $\phi(\xi) \!=\! \big(\xi, \phi_2(\xi), \ldots, \phi_T(\xi), q_{T+1}, \ldots, q_{n-1}, \phi_n(\xi) \big) \!\eqqcolon\! \big(\phi_\ast(\xi), \phi_{n}(\xi) \big)$ with suitable constants $q_{T+1}, \ldots, q_{n-1} \in \C$. After a possible shrinking of $U$, we can assume that there exist positive numbers $\sigma, \theta > 0$ such that on $U$
\begin{equation} \label{equ_slopephi}
\theta < \abs{\phi_p'} \quad \text{for all} \,\, p \in P, \qquad \abs{\phi_p'} < \sigma \quad \text{for all} \,\, p \in \N_n.
\end{equation}
Indeed, $\theta$ exists since the zero set of each $\abs{\phi_p'}$ is discret, and we use Cauchy's estimates to find $\sigma$. Thus, after possibly shrinking $U$ again, we can assume that for $z, z' \in \phi_\ast(U)$ and $1 \le s,t \le T$ we have $\theta < \abs{z_t' - z_t} / \abs{z_1' - z_1}$ and $\abs{z_s' - z_s}/\abs{z_1'- z_1} < \sigma$, i.e., $\abs{z_s' - z_s}/\abs{z_t' - z_t} < \sigma / \theta$. In particular, we see that there exists $\alpha \coloneqq \sigma / \theta > 1$ such that
\[ D\phi_\ast(z_1)(\C) \subset \gamma(P, \alpha) \quad \text{for all} \,\, z_1 \in U. \]
Moreover (after possibly further shrinking $U$), we can assume that for every $p \in \N_n$
\begin{equation} \label{equ_restgamma}
  \frac{\gabs{\phi_p(a + \xi) - \phi_p(a) - \phi_p'(a)\xi}}{\abs{\xi}} < (C/2)\,\theta \qquad \begin{minipage}{20ex} for all $a \in U$, $\xi \in \C$ \\ such that $a + \xi \in U$. \end{minipage} 
\end{equation}
Since we can assume $f\big(\D_r(0)\big)$ to be bounded, and since $\frac{1}{9}(\varepsilon_l / \varepsilon_{l+1})^2 > l$, we can choose $\nu_0 \in \N$ so large that $\phi_\ast(U) \subset B^{n-1}(0, \nu_0)$,
\begin{equation} \label{equ_nualpha} 
  \nu_0 + 1 > \alpha \quad \text{and} \quad {\textstyle \frac{1}{9}} (\varepsilon_l / \varepsilon_{l+1})^2 > \alpha \;\; \text{for all} \;\; l \ge \nu_0. 
\end{equation}
Further, since $\max_{1 \le p \le n-1} \vol \big( \bigcup_{l \in L_\nu^p} \Delta^1(a_l, \rho_\nu) \big) \to 0$ for $\nu \to \infty$ and $\{\rho_\nu\}$ is decreasing, we can assume (after possibly enlarging $\nu_0$ and then shrinking of $U$) that $\phi_p(U) \cap \bigcup_{l \in L_{\nu_0}^p} \Delta^1(a_l, \rho_\nu) = \varnothing$ for all $p \in P$ and $\nu \ge \nu_0$. But then $\tilde{\sL}_\nu^P(z) \cap \N_{\nu_0} = \varnothing$ for all $\nu \ge \nu_0$, $z \in S_\nu \cap \phi_\ast(U)$, $[\nu] \in P$. By definition of $\tilde{\alpha}_\nu^P(z)$ and from $(\ref{equ_nualpha})$, we therefore get
\[ \textstyle \displaystyle \tilde{\alpha}_\nu^P(z) > \alpha \quad \text{for all} \;\; \nu \ge \nu_0, \, z \in S_\nu \cap \phi_\ast(U), \, [\nu] \in P. \]
After these preparations, we now choose a strictly increasing sequence $\{\nu_k\}$ of natural numbers such that for each $\nu$ from this sequence we have
\[ \nu \ge \nu_0, \quad [\nu] = 1, \quad B^{n-1}\big( \phi_\ast(a_\nu), 1/\nu \big) \subset U \times \C^{n-2}. \]
Let $\nu$ be an arbitrary fixed member of this sequence. Since $\phi_\ast(U) \subset B^{n-1}(0, \nu_0)$, we see that $z \coloneqq \phi_\ast(a_\nu) \in S_\nu \cap B^{n-1}(0, \nu)$. Hence we can use Lemma \ref{thm_choiceeps} to find a $\delta \in (0, 1/\nu)$ such that
\[ \frac{w' - w''}{\norm{\zeta' + 2\zeta + \zeta''}} \ge \frac{\nu - 1}{1 + (C/2)} \quad \text{for all} \quad w' \in \mathcal{E}_{z + (\zeta + \zeta')}, \, w'' \in \mathcal{E}_{z - (\zeta + \zeta'')} \]
and all choices of
\begin{equation} \label{equ_propzeta}
  \zeta \in \gamma\big(P, \tilde{\alpha}_\nu^P(z) \big) \cap \partial B^{n-1}(0,\delta) \quad \text{and} \quad \zeta', \zeta'' \in \Delta^{n-1}\big(0, (C/2)\abs{\zeta} \big).
\end{equation}
By the choice of $U$ and $\nu_0$, we have $D\phi_\ast(a_\nu)(\xi - a_\nu) \in \gamma(P, \alpha)$ for all $\xi \in U \setminus \{a_\nu\}$ and $\tilde{\alpha}_\nu^P(z) > \alpha$; hence $D\phi_\ast(a_\nu)(\xi - a_\nu) \in \gamma\big(P, \tilde{\alpha}_\nu^P(z) \big)$. Moreover, $B^{n-1}(z, \delta ) \subset U \times \C^{n-2}$. Thus
\begin{equation*} \begin{split}
 \Sigma \coloneqq &\big[ z + \gamma\big(P, \tilde{\alpha}_\nu^P(z) \big) \big] \\ &\cap \big[ z + \{D\phi_\ast(a_\nu)(\xi - a_\nu) : \xi \in \C \setminus \{0\} \text{ such that } a_\nu + \xi \in U \} \big] \cap \partial B^{n-1}(z, \delta)
\end{split} \end{equation*}
is nonempty. Therefore we can choose $\zeta \in \gamma\big(P, \tilde{\alpha}_\nu^P(z) \big)$ such that $z \pm \zeta \in \Sigma$, and $\xi \in \C$ such that $a_\nu \pm \xi \in U$ and $D\phi_\ast(a_\nu)(\pm\xi) = \pm \zeta$. Now applying $(\ref{equ_restgamma})$ in the case $a = a_\nu$ and using $(\ref{equ_slopephi})$ yields
\[ \gabs{\phi_p(a_\nu + \xi) - z_p - \zeta_p} < (C/2)\,\theta \abs{\xi} < (C/2)\abs{\phi_p'(a_\nu) \xi} = (C/2)\abs{\zeta_p} \]
for every $p \in P$. Since also $\phi_p(a_\nu + \xi) = z_p + \zeta_p$ for $p \in \N_{n-1} \setminus P$ and  $\phi_1(z_1) = z_1$, this shows that there exist uniquely determined $\zeta', \zeta'' \in \Delta^{n-1}\big(0, (C/2)\abs{\zeta}\big)$ such that $z + (\zeta + \zeta') = \phi_\ast(a_\nu + \xi)$, $z - (\zeta +  \zeta'') = \phi_\ast(a_\nu - \xi)$ and $\zeta_1', \zeta_1'' = 0$. In particular, we see from $\phi(U) \subset f\big(\D_{r'}(\xi_0)\big) \subset f\big(\D_r(0)\big) \subset \mathcal{E}$ that
\[ w \coloneqq \phi_n(a_\nu + \xi) \in \mathcal{E}_{z + (\zeta + \zeta')}, \qquad w' \coloneqq \phi_n(a_\nu - \xi) \in \mathcal{E}_{z - (\zeta + \zeta'')}. \]
Observe that $\zeta, \zeta' \zeta''$ satisfy the conditions in $(\ref{equ_propzeta})$. Since $a_\nu \in U$ and $\phi_1' \equiv 1$ on $U$, we get $\norm{\zeta' + 2\zeta + \zeta''} \ge (2 - C) \norm{\zeta} = (2 - C) \norm{D\phi_\ast(a_\nu)(\xi)} \ge (2 - C) \abs{\xi}$ and thus, in view of Lemma \ref{thm_choiceeps}, can finally make the following estimate:
\[ \frac{\gabs{\phi_n(a_\nu + \xi) - \phi_n(a_\nu - \xi)}}{2\abs{\xi}} \ge (1- C/2)\cdot \frac{\abs{w - w'}}{\norm{\zeta' + 2\zeta+ \zeta''}} \ge \frac{1-C/2}{1+C/2} \cdot (\nu - 1)\,. \]
This holds true for every member $\nu$ of the strictly increasing sequence $(\nu_k)$, and the right term becomes unbounded as $\nu \to + \infty$. Since for each fixed $\nu$ the number $\xi$ was chosen such that $a_\nu \pm \xi \in U$, this contradicts the fact that $\phi_n$ has a bounded derivate on $U$.
\end{proof}
%
%
%

%
%
%
%
%
%
\section{Choice of the sequence $\{\varepsilon_l\}$ - Part II}
Recall that $E_\nu = \{P_\nu = 0\}$, $\nu \in \N$. We show that for $\{\varepsilon_l\}$ decreasing fast enough we can guarantee nice convergence properties of the sequence $\{P_\nu\}$ as well as certain relations between the limit set $\mathcal{E}$ of $\{E_\nu\}$ and the sublevel sets of the defining polynomials $P_\nu$.

\begin{lemma} \label{thm_convPn}
Let $\{\varepsilon_l\}$ be chosen in such a way that $\varepsilon_l \sqrt{\abs{z_{[l]} - a_l}} < 1/2^l$ on $B^{n-1}(0,l)$ for every $l \in \N$. Then the sequence $\{\abs{P_\nu}^{1/2^\nu}\}$ converges uniformly on compact subsets of $\C^n \setminus \mathcal{E}$, and $\lim_{\nu \to \infty} \abs{P_\nu}^{1/2^\nu} > 0$ on $\C^n \setminus \mathcal{E}$.
\end{lemma}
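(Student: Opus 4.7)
The plan is to pass to logarithms and analyze the telescoping series
\[ \sum_{\nu\ge\nu_0}\Big(\log|P_{\nu+1}|^{1/2^{\nu+1}}-\log|P_\nu|^{1/2^\nu}\Big), \]
using the explicit factorization of $P_{\nu+1}/P_\nu^2$ obtained in the proof of Lemma \ref{thm_defPn}.

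\textbf{Step 1 (uniform separation of $K$ from the fibers of $E_\nu$).} Fix a compact set $K\subset\C^n\setminus\mathcal{E}$ and a radius $R>0$ with $K\subset B^n(0,R)$. I claim there exist $d>0$ and $\nu_0\in\N$ such that $\abs{w-w_j^{(\nu)}(z)}\ge d$ for every $(z,w)\in K$, every $\nu\ge\nu_0$ and every $j\in\N_{2^\nu}$. Otherwise one extracts sequences $(z_m,w_m)\in K$, integers $\nu_m\to\infty$ and indices $j_m\in\N_{2^{\nu_m}}$ with $\abs{w_m-w_{j_m}^{(\nu_m)}(z_m)}<1/m$. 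After passing to a subsequence, $(z_m,w_m)\to(z^*,w^*)\in K$, so also $(z_m,w_{j_m}^{(\nu_m)}(z_m))\to(z^*,w^*)$. But these last points lie in $E_{\nu_m}\cap\overline{B}^n(0,R)$ for $m$ large, and the Hausdorff convergence $E_\nu\cap\overline{B}^n(0,R)\to\mathcal{E}_{(R)}$ from Lemma \ref{thm_defE} forces $(z^*,w^*)\in\mathcal{E}$, contradicting $K\cap\mathcal{E}=\emptyset$.

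\textbf{Step 2 (telescoping estimate).} From the proof of Lemma \ref{thm_defPn},
\[ P_{\nu+1}(z,w)=\prod_{j=1}^{2^\nu}\big[(w-w_j^{(\nu)}(z))^2-\varepsilon_{\nu+1}^2(z_{[\nu+1]}-a_{\nu+1})\big], \]
so, wherever $P_\nu\ne 0$,
\[ \frac{P_{\nu+1}(z,w)}{P_\nu(z,w)^2}=\prod_{j=1}^{2^\nu}\Big(1-x_j^{(\nu)}(z,w)\Big),\qquad x_j^{(\nu)}(z,w)\coloneqq\frac{\varepsilon_{\nu+1}^2(z_{[\nu+1]}-a_{\nu+1})}{(w-w_j^{(\nu)}(z))^2}. \]
The standing hypothesis $\varepsilon_l^2\abs{z_{[l]}-a_l}<1/4^l$ on $B^{n-1}(0,l)$, combined with the bound $\abs{w-w_j^{(\nu)}(z)}\ge d$ from Step 1, gives $\abs{x_j^{(\nu)}}<1/(4^{\nu+1}d^2)$ uniformly on $K$ for all $\nu$ sufficiently large. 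Using the elementary inequality $\abs{\log|1-x|}\le 2\abs{x}$ for $\abs{x}\le 1/2$, one deduces
\[ \Gabs{\log|P_{\nu+1}|^{1/2^{\nu+1}}-\log|P_\nu|^{1/2^\nu}}\le\frac{1}{2^{\nu+1}}\sum_{j=1}^{2^\nu}2\abs{x_j^{(\nu)}}\le\frac{1}{4^{\nu+1}d^2} \]
uniformly on $K$ for all $\nu$ sufficiently large.

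\textbf{Step 3 (conclusion).} The geometric bound of Step 2 shows that the telescoping series $\sum_\nu|\log|P_{\nu+1}|^{1/2^{\nu+1}}-\log|P_\nu|^{1/2^\nu}|$ is uniformly convergent on $K$. Hence $\log|P_\nu|^{1/2^\nu}$ is uniformly Cauchy on $K$. Since $|P_{\nu_0}|^{1/2^{\nu_0}}\ge d$ by Step 1 and $|P_{\nu_0}|^{1/2^{\nu_0}}$ is bounded above on the compact $K$, the starting value $\log|P_{\nu_0}|^{1/2^{\nu_0}}$ is finite; therefore $\log|P_\nu|^{1/2^\nu}$ converges uniformly on $K$ to a finite continuous function. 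Exponentiating yields uniform convergence of $|P_\nu|^{1/2^\nu}$ to a strictly positive limit on $K$. Since $K\subset\C^n\setminus\mathcal{E}$ was arbitrary, both assertions of the lemma follow.

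The principal obstacle is Step 1, namely producing a \emph{uniform} positive gap between the $w$-coordinates of points of $K$ and the $2^\nu$ roots $w_j^{(\nu)}(z)$ of $P_\nu(z,\cdot)$ whose number grows with $\nu$. Without such a uniform gap the individual factors $1-x_j^{(\nu)}$ cannot be controlled, and the telescoping argument breaks down; the argument above relies in an essential way on the Hausdorff convergence $E_\nu\cap\overline{B}^n(0,R)\to\mathcal{E}_{(R)}$ established in Lemma \ref{thm_defE}.
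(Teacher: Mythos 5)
Your proof is correct. It rests on the same key mechanism as the paper's — the uniform gap $\abs{w-w_j^{(\nu)}(z)}\ge d$ on a compact set away from $\mathcal{E}$, extracted from the Hausdorff convergence $E_\nu\cap\Bbarn(0,R)\to\mathcal{E}_{(R)}$ of Lemma \ref{thm_defE} — but the quantitative step is executed differently. The paper never passes to logarithms: it compares $\abs{P_\nu}^{1/2^\nu}$ directly with $\abs{P_{\nu+\mu}}^{1/2^{\nu+\mu}}$ for arbitrary $\mu$, observing that each factor $\abs{w-w_j^{(\nu)}(z)}$ gets replaced by the geometric mean of the $2^\mu$ distances to the descendant roots $w_k^{(\mu)}(\nu,j;z)$, each of which lies within $1/2^\nu$ of $w_j^{(\nu)}(z)$; the additive perturbation of each factor by at most $1/2^\nu$, together with the lower bound $r$ on the factors, gives the sandwich $\prod_j(\abs{w-w_j^{(\nu)}}\mp 1/2^\nu)^{1/2^\nu}$ and hence the Cauchy property and positivity of the limit in one stroke. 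You instead telescope $\log\abs{P_\nu}^{1/2^\nu}$ one step at a time, using the exact factorization $P_{\nu+1}=\prod_j\bigl[(w-w_j^{(\nu)})^2-\varepsilon_{\nu+1}^2(z_{[\nu+1]}-a_{\nu+1})\bigr]$ and the elementary bound $\abs{\log\abs{1-x}}\le 2\abs{x}$; this yields the cleaner summable per-step error $4^{-(\nu+1)}d^{-2}$ and makes the finiteness and positivity of the limit transparent via boundedness of $\log\abs{P_{\nu_0}}^{1/2^{\nu_0}}$. Both arguments are complete; the paper's avoids logarithms (mildly convenient since it works pointwise-near-a-point rather than on a general compact), while yours isolates the multiplicative structure more explicitly and quantifies the rate of convergence. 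The small caveats in your write-up (needing $\nu+1\ge R$ so that the hypothesis on $\varepsilon_{\nu+1}$ applies on $K$, and $\abs{x_j^{(\nu)}}\le 1/2$ before invoking the logarithm inequality) are correctly absorbed into ``for $\nu$ sufficiently large.''
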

\begin{proof}
Fix $(z_0,w_0) \in \C^n \setminus \mathcal{E}$ and choose $R > 0$ such that $(z_0,w_0) \in \Delta_R \coloneqq B^{n-1}(0,R) \times \C$. Since $\mathcal{E}$ is closed and $E_\nu \cap \overline{\Delta}_R \to \mathcal{E} \cap \overline{\Delta}_R$ in the Hausdorff metric, there exist a ball $B \coloneqq B^n\big((z_0,w_0), \delta\big) \subset \Delta_R$ and positive numbers $r > 0$, $N_r > 0$ such that $\dist (B, E_\nu) > r$ for all $\nu \ge N_r$. Now for every $\nu, \mu \in \N$, $j \in \N_{2^\nu}$ and $z \in \C^{n-1}$ we denote the $2^\mu$ values of $w_j^{(\nu)}(z) + \sum_{l = \nu + 1}^{\nu + \mu} \varepsilon_l \sqrt{z_{[l]} - a_l}$ by $w_1^{(\mu)}(\nu, j; z), \ldots, w_{2^\mu}^{(\mu)}(\nu, j; z)$. Observe that with this notation we have
\[ \abs{P_{\nu+\mu}(z,w)}^{1/2^{\nu+\mu}} \!\!= \! \prod_{l = 1}^{2^{\nu+\mu}} \abs{w - w_{l}^{(\nu+\mu)}(z)}^{1/2^{\nu+\mu}} \!\!=  \prod_{j = 1}^{2^\nu} \prod_{k=1}^{2^\mu} \abs{w - w_k^{(\mu)}(\nu,j;z)}^{1/2^{\nu+\mu}}; \]
thus passing from $\abs{P_\nu(z,w)}^{1/2^\nu}$ to $\abs{P_{\nu+\mu}(z,w)}^{1/2^{\nu+\mu}}$ amounts to replace each term $\abs{w - w_j^{(\nu)}(z)}$ occuring in the product expansion of $\abs{P_\nu(z,w)}^{1/2^\nu}$ by the mean value $\prod_{k=1}^{2^\mu} \abs{w - w_k^{(\mu)}(\nu,j;z)}^{1/2^\mu}$. Since for $\nu \ge R$ one has $\abs{w_j^{(\nu)}(z) - w_k^{(\mu)}(\nu,j;z)} \le \sum_{l=\nu+1}^{\nu+\mu} \varepsilon_l \sqrt{\abs{z_{[l]} - a_l}} < 1/2^\nu$ for all $z \in B^{n-1}(0,R)$, we can estimate the resulting error, by means of
\[ \prod_{k=1}^{2^\mu} \abs{w - w_k^{(\mu)}(\nu,j;z)}^{1/2^\mu} > \prod_{k=1}^{2^\mu} \Big( \abs{w-w_j^{(\nu)}(z)} - 1/2^\nu \Big)^{1/2^\mu} \!\!\!\! = \abs{w-w_j^{(\nu)}(z)} - 1/2^\nu, \]
%
%
\[ \prod_{k=1}^{2^\mu} \abs{w - w_k^{(\mu)}(\nu,j;z)}^{1/2^\mu} < \prod_{k=1}^{2^\mu} \Big( \abs{w-w_j^{(\nu)}(z)} + 1/2^\nu \Big)^{1/2^\mu} \!\!\!\! = \abs{w-w_j^{(\nu)}(z)} + 1/2^\nu, \]
to be less than $1/2^\nu$ for all $(z,w) \in B \subset B^{n-1}(0,R) \times \C$ (obviously the first inequality is trivial if $\abs{w-w_j^{(\nu)}(z)} < 1/2^\nu$). In particular, whenever $\abs{w - w_j^{(\nu)}(z)} \ge 1/2^\nu$ on $B$ and $\nu \ge R$, we get
\begin{equation*} \label{equ_CauchyPn}
\prod_{j = 1}^{2^\nu} \Big(\abs{w-w_j^{(\nu)}(z)} - 1/2^\nu\Big)^{1/2^\nu} \!\! \le \abs{P_{\nu+\mu}(z,w)}^{1/2^{\nu+\mu}} \! \le \prod_{j = 1}^{2^\nu} \Big(\abs{w-w_j^{(\nu)}(z)} + 1/2^\nu\Big)^{1/2^\nu}
\end{equation*}
on $B$. But $\abs{w - w_j^{(\nu)}(z)} > r$ on $B$ for all $\nu \ge N_r$, where $r$ does not depend on $\nu$. Since $\abs{P_\nu(z,w)} = \prod_{j=1}^{2^\nu}\abs{w-w_j^{(\nu)}(z)}$, this shows that $\big\{\abs{P_\nu(z,w)}^{1/2^\nu}\big\}_{\nu \ge 1}$ is a Cauchy sequence for every $(z,w) \in B$ and in fact that $\big\{\abs{P_\nu}^{1/2^\nu}\big\}_{\nu \ge 1}$ converges uniformly on $B$. Moreover, $\lim_{\nu \to \infty} \abs{P_\nu}^{1/2^\nu} > 0$ on $B$, since the above estimates hold true for all $\mu \in \N$.
\end{proof}
\begin{lemma} \label{thm_sublevelE}
If $\{\varepsilon_l\}$ is decreasing fast enough, then
\begin{equation} \label{equ_sublevelE}
\mathcal{E} = \bigcap_{\nu \in \N}\bigcup_{\mu \ge \nu} \big\{ \abs{P_\mu} < (\textstyle\frac{1}{\mu})^{2^\mu} \big\}.
\end{equation}
Moreover, the following relations hold true for every $\mu \ge \nu \ge R$:
\begin{enumerate}
  \item[(1)] $\{ \abs{P_\mu} < (\frac{1}{\nu+1})^{2^\mu} \} \cap \Bbarn(0,R) \subset \subset \{ \abs{P_\nu} < (\frac{1}{\nu})^{2^\nu} \}.$
  \item[(2)] $\{ \abs{P_\nu} < (\frac{1}{\nu})^{2^\nu} \} \cap \Bbarn(0,R) \subset \subset \{ \abs{P_\mu} < (\frac{1}{\nu-1})^{2^\mu} \}.$
\end{enumerate}
\end{lemma}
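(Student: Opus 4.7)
The plan is to strengthen the decay of $\{\varepsilon_l\}$ a little further and then to exploit the key estimate already hidden in the proof of Lemma \ref{thm_convPn}. Introduce
\[ \tilde\delta_\nu \coloneqq \sup_{z \in \Bbarnn(0,\nu)} \sum_{l > \nu} \varepsilon_l \sqrt{\abs{z_{[l]} - a_l}}, \qquad K_R \coloneqq \sup\bigl\{\abs{w - w_j^{(\nu)}(z)} : (z,w) \in \Bbarn(0,R),\,\nu \in \N,\,j \in \N_{2^\nu}\bigr\}. \]
Both quantities are finite: $K_R$ because the tail $\sum_l \varepsilon_l \sqrt{\abs{z_{[l]}-a_l}}$ converges uniformly on $\Bbarnn(0,R)$ (so $\abs{w_j^{(\nu)}(z)}$ is bounded independently of $\nu$), and $\tilde\delta_\nu \le 1/2^\nu$ by the hypothesis of Lemma~\ref{thm_defE}. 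On top of all previous conditions on $\{\varepsilon_l\}$ I impose
\[ \tilde\delta_\nu < \bigl(2 K_\nu \nu^2\bigr)^{-2^\nu} \qquad \text{for every } \nu \in \N. \]
Since $\tilde\delta_\nu$ depends only on $\{\varepsilon_l\}_{l > \nu}$ while $K_\nu$ is essentially determined by $\{\varepsilon_l\}_{l \le \nu}$, this further restriction fits into the inductive scheme used in the proof of Lemma~\ref{thm_choiceeps}.

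From the proof of Lemma \ref{thm_convPn} one extracts the fundamental two-sided estimate, valid on $\Bbarn(0,R)$ for all $\mu \ge \nu \ge R$:
\[ \prod_{j=1}^{2^\nu} \max\bigl\{0,\, \abs{w-w_j^{(\nu)}(z)} - \tilde\delta_\nu\bigr\}^{1/2^\nu} \le \abs{P_\mu(z,w)}^{1/2^\mu} \le \prod_{j=1}^{2^\nu} \bigl(\abs{w - w_j^{(\nu)}(z)} + \tilde\delta_\nu\bigr)^{1/2^\nu}. \]
To prove (1) and (2) I argue by a dichotomy on the separation of $w$ from the roots of $P_\nu(z,\,\cdot\,)$. \emph{Case A:} $\abs{w - w_j^{(\nu)}(z)} \ge (\nu+2)\tilde\delta_\nu$ for every $j \in \N_{2^\nu}$. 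Each ratio $\tilde\delta_\nu/\abs{w - w_j^{(\nu)}(z)}$ is then at most $1/(\nu+2)$, and the preceding display collapses to
\[ \tfrac{\nu+1}{\nu+2}\,\abs{P_\nu(z,w)}^{1/2^\nu} \le \abs{P_\mu(z,w)}^{1/2^\mu} \le \tfrac{\nu+3}{\nu+2}\,\abs{P_\nu(z,w)}^{1/2^\nu}. \]
Elementary algebra yields the strict implications $\abs{P_\mu}^{1/2^\mu} \le 1/(\nu+1) \Rightarrow \abs{P_\nu}^{1/2^\nu} \le (\nu+2)/(\nu+1)^2 < 1/\nu$ and $\abs{P_\nu}^{1/2^\nu} \le 1/\nu \Rightarrow \abs{P_\mu}^{1/2^\mu} \le (\nu+3)/(\nu(\nu+2)) < 1/(\nu-1)$, i.e.\ exactly (1) and (2), with strictness giving $\subset\subset$. \emph{Case B:} some $\abs{w - w_{j_0}^{(\nu)}(z)} < (\nu+2)\tilde\delta_\nu$. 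Bounding the remaining factors of $P_\nu(z,\,\cdot\,)$ as well as the $2^{\mu-\nu}$ roots of $P_\mu(z,\,\cdot\,)$ descending from $w_{j_0}^{(\nu)}(z)$ uniformly by $K_R \le K_\nu$, the choice of $\tilde\delta_\nu$ forces both $\abs{P_\nu(z,w)}^{1/2^\nu}$ and $\abs{P_\mu(z,w)}^{1/2^\mu}$ to be smaller than $1/\nu^2$, so (1) and (2) hold trivially in this case.

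For the equality $\mathcal{E} = \bigcap_\nu \bigcup_{\mu \ge \nu} \{\abs{P_\mu} < (1/\mu)^{2^\mu}\}$, the inclusion $\supseteq$ is immediate from Lemma~\ref{thm_convPn}: off $\mathcal{E}$ the sequence $\abs{P_\mu}^{1/2^\mu}$ converges to a strictly positive limit, so it eventually exceeds $1/\mu$ and the point escapes almost all of the sublevel sets. For $\subseteq$, fix $(z,w) \in \mathcal{E}$ and an integer $R$ with $(z,w) \in B^n(0,R)$. The slicewise Hausdorff estimate inside the proof of Lemma~\ref{thm_defE}(1) (each root $w_j^{(\nu+\mu)}(z)$ is obtained from some $w_k^{(\nu)}(z)$ by adding a tail of size at most $\tilde\delta_\nu$) passes to the limit in $\mu$, yielding for every $\nu \ge R$ some $j_0 \in \N_{2^\nu}$ with $\abs{w - w_{j_0}^{(\nu)}(z)} \le \tilde\delta_\nu$. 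Bounding the remaining $2^\nu - 1$ factors of $P_\nu(z,w)$ by $K_R$ gives $\abs{P_\nu(z,w)}^{1/2^\nu} \le \tilde\delta_\nu^{1/2^\nu} K_R$, and the rapid decay of $\tilde\delta_\nu$ makes this less than $1/\nu$ for all sufficiently large $\nu$.

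The main difficulty is bookkeeping: verifying that the new constraint $\tilde\delta_\nu < (2K_\nu \nu^2)^{-2^\nu}$ can be woven into the already intricate inductive choice of $\{\varepsilon_l\}$ coming from Lemmas~\ref{thm_defE}, \ref{thm_choiceeps} and~\ref{thm_convPn}, and checking the strictness in Case A carefully enough that (1) and (2) yield the full compact containment $\subset\subset$ rather than just $\subset$. Once this refined decay of $\{\varepsilon_l\}$ is in place, the analytic content of the lemma is entirely captured by the short dichotomy above.
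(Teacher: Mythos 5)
Your proposal is correct, and it takes a genuinely different route from the paper. The paper never compares $\abs{P_\mu}^{1/2^\mu}$ and $\abs{P_\nu}^{1/2^\nu}$ directly: it telescopes the inclusion through every intermediate index $\rho$ between $\nu$ and $\mu$, using the uniform convergence $P_{\rho+1}\to P_\rho^2$ (Lemma \ref{thm_defPn}) to set up conditions $(3_\rho),(3_\rho')$ relating consecutive sublevel sets up to $\delta_\rho/2^\rho$-thickenings $M^{\langle\pm\delta\rangle}$, and a separation condition $(2_\nu)$ between the level sets $\{\abs{P_\nu}=(\frac{1}{\nu\pm1})^{2^\nu}\}$ and $\{\abs{P_\nu}=(\frac{1}{\nu})^{2^\nu}\}$ to absorb the accumulated thickening $\sum_l\delta_l/2^l<\delta_\nu$. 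You instead jump from $\nu$ to $\mu$ in one step via the root-perturbation estimate already implicit in the proof of Lemma \ref{thm_convPn}, splitting on whether $w$ is within $(\nu+2)\tilde\delta_\nu$ of some root of $P_\nu(z,\cdot)$; this makes the argument shorter and fully quantitative, at the price of replacing the paper's soft conditions by the explicit decay requirement $\tilde\delta_\nu<(2K_\nu\nu^2)^{-2^\nu}$. Two cosmetic points you should tidy up: in Case B the bound you actually get for $\abs{P_\mu}^{1/2^\mu}$ is $(\nu+3)^{1/2^\nu}(K_R+\tilde\delta_\nu)^{1-1/2^\nu}\tilde\delta_\nu^{1/2^\nu}$, which is a bit larger than $1/\nu^2$ but still comfortably below $1/(\nu-1)$ and (for the $\abs{P_\nu}$ conclusion, where the cleaner bound does give $1/\nu^2$) below $1/\nu$; and $K_\nu$ as literally defined involves a supremum over the whole sequence $\{\varepsilon_l\}$, so in the induction one should replace it by an a priori upper bound depending only on $\varepsilon_1,\ldots,\varepsilon_{\lceil\nu\rceil}$ (such a bound exists because the tail contributes at most $\sum_{l>\nu}2^{-l}$), which also keeps the constraint downward-closed under shrinking later $\varepsilon_l$ and hence compatible with Lemmas \ref{thm_defE} and \ref{thm_choiceeps}. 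Your treatment of the equality $(\ref{equ_sublevelE})$ coincides in the easy direction with the paper's and is a legitimate shortcut in the other direction (extracting a root $w_{j_0}^{(\nu)}(z)$ with $\abs{w-w_{j_0}^{(\nu)}(z)}\le\tilde\delta_\nu$ from Lemma \ref{thm_defE} rather than invoking statement 1 of the present lemma).
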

\begin{proof}
For $M \subset \C^n$ and $R, \delta > 0$ we let $M_{(R)} \coloneqq M \cap \Bbarn(0,R)$ and
\[ M^{\langle \delta \rangle} \coloneqq M \cup \bigcup_{x \in \partial M} B^n(x,\delta) \quad \text{and} \quad M^{\langle -\delta \rangle} \coloneqq M \setminus \bigcup_{x \in \partial M} B^n(x,\delta). \]
One easily verifies the following relations for all $M,N \subset \C^n$ and $R,\delta,\delta_1,\delta_2 > 0$:
\begin{enumerate}
\item[(A)] $M \subset N \Rightarrow M^{\langle\delta\rangle} \subset N^{\langle\delta\rangle}$ \;and\; $M \subset N \Rightarrow M^{\langle-\delta\rangle} \subset N^{\langle-\delta\rangle}$.
\item[(B)] $M_{(R)} \subset N \Rightarrow M_{(R)} \subset \subset N^{\langle \delta \rangle}$ \;and\; $M_{(R)} \subset N$ $\Rightarrow [M^{\langle -\delta \rangle}]_{(R)} \subset \subset N$.
\item[(C)] $[ M^{\langle \delta_1 \rangle} ]^{\langle \delta_2 \rangle} = M^{\langle \delta_1 + \delta_2 \rangle}$ \;and\; $[ M^{\langle -\delta_1 \rangle} ]^{\langle -\delta_2\rangle} = M^{\langle -(\delta_1 + \delta_2) \rangle}$.
\item[(D)] $[M^{\langle \delta \rangle}]_{(R-\delta)} \subset [M_{(R)}]^{\langle \delta \rangle}$ \;and\; $[M^{\langle -\delta \rangle}]_{(R-\delta)} \subset [M_{(R)}]^{\langle -\delta \rangle}$.
\end{enumerate}
Moreover, $M^{\langle \pm \delta \rangle}_{(R)}$ will denote the set $M^{\langle \pm \delta \rangle} \cap \Bbarn(0,R)$. We can choose sequences $\{\varepsilon_l\}, \{\delta_l\}$ of positive numbers converging to zero such that for all $\nu \in \N$ the following relations hold true: \vspace{-1ex}
\[ \renewcommand{\arraystretch}{1.75} \begin{array}{cl} 
  (1_\nu) & \varepsilon_\nu \sqrt{\abs{z_{[\nu]} - a_\nu}} < \frac{1}{2^\nu} \text{ on } B^{n-1}(0,\nu). \\
  (2_\nu) & \Big[ \big\{ \abs{P_\nu} < (\frac{1}{\nu+1})^{2^\nu} \big\}_{(\nu+1)} \cup \big\{ \abs{P_\nu} > (\frac{1}{\nu-1})^{2^\nu} \big\}_{(\nu+1)}  \Big] \cap \big\{ \abs{P_\nu} = (\frac{1}{\nu})^{2^\nu} \big\}^{\langle \delta_\nu \rangle} = \varnothing .\\
  (3_{\nu+1}) & \big\{ \abs{P_{\nu+1}} < (\frac{1}{\lambda})^{2^{\nu+1}} \big\}_{(\nu + 1)} \subset \big\{ \abs{P_\nu} < (\frac{1}{\lambda})^{2^\nu} \big\}^{\langle \delta_\nu/2^\nu \rangle} \quad \text{for } \lambda = 1, \ldots, \nu+1. \\
   (3_{\nu+1}') & \big\{ \abs{P_\nu} < (\frac{1}{\lambda})^{2^\nu} \big\}^{\langle -\delta_\nu/2^\nu \rangle}_{(\nu + 1)} \subset \big\{ \abs{P_{\nu+1}} < (\frac{1}{\lambda})^{2^{\nu+1}} \big\} \quad \text{for } \lambda = 1, \ldots, \nu - 1. \\
\end{array} \renewcommand{\arraystretch}{1}\]
Indeed, we can choose $\varepsilon_1$ to satisfy $(1_1)$. After fixing such $\varepsilon_1$, the polynomial $P_1$ is fixed, and we can choose $\delta_1 < 1/2$ to satisfy $(2_1)$. Suppose now that $\varepsilon_l, \delta_l$ are already chosen for $l = 1, 2, \ldots, \nu$ such that $(1_\nu)$-$(3_\nu')$ hold true. By Lemma \ref{thm_defPn} we know that $P_{\nu+1} \rightarrow P_\nu^2$ uniformly on compact subsets as $\varepsilon_{\nu+1} \rightarrow 0$; hence we can find $\varepsilon > 0$ such that for $\varepsilon_{\nu+1} < \varepsilon$ the polynomial $P_{\nu+1}$ satisfies $(3_{\nu+1})$ and $(3_{\nu+1}')$. Moreover, we can find $\varepsilon' > 0$ such that for $\varepsilon_{\nu+1} < \varepsilon'$ the inequality $(1_{\nu+1})$ holds true. We choose $\varepsilon_{\nu+1} < \min \{ \varepsilon, \varepsilon' \}$, and we point out that every $\varepsilon_{\nu+1}'\in(0,\varepsilon_{\nu+1})$ would also be a proper choice for $\varepsilon_{\nu+1}$. For $P_{\nu+1}$ now being fixed, we can find $\delta_{\nu+1} < \delta_\nu / 2$ satisfying $(2_{\nu+1})$.

\noindent (i) We now prove statement (1) of the lemma. In order to do this we need the following

\noindent \textsc{Claim 1}. For $\mu > \nu \ge R$, one has

\[ \big\{ \abs{P_\mu} < (\textstyle\frac{1}{\nu+1})^{2^\mu} \big\}_{(R)} \subset \big\{ \abs{P_\nu} < (\textstyle\frac{1}{\nu+1})^{2^\nu} \big\}^{\langle \sum_{l=\nu}^{\mu-1} \delta_l / 2^l \rangle}. \]
\textsc{Proof}. Let $\mu > \nu \ge R$ be fixed. For proving the statement of the claim, we use reverse induction on $\rho$ to show that
\begin{equation} \label{equ_sublevelind+1}
 \big\{ \abs{P_\mu} < (\textstyle\frac{1}{\nu+1})^{2^\mu} \big\}_{(R)} \subset \big\{ \abs{P_\rho} < (\textstyle\frac{1}{\nu+1})^{2^\rho} \big\}^{\langle \sum_{l=\rho}^{\mu-1} \delta_l / 2^l \rangle} \quad \text{for } \rho = \mu-1, \ldots, \nu. 
\end{equation}
The case $\rho = \mu-1$ follows immediately from $(3_\mu)$ with $\lambda = \nu + 1$. Suppose that property $(\ref{equ_sublevelind+1})$ holds for some $\rho \in \N$ such that $\mu > \rho > \nu \ge R$. Then one also has
\begin{equation} \label{equ_sublevelind+2}
\big\{ \abs{P_\mu} < (\textstyle\frac{1}{\nu+1})^{2^\mu} \big\}_{(R)} \subset \big\{ \abs{P_\rho} < (\textstyle\frac{1}{\nu+1})^{2^\rho} \big\}^{\langle \sum_{l=\rho}^{\mu-1} \delta_l / 2^l \rangle}_{(R)}.
\end{equation}
Hence applying $(3_\rho)$ with $\lambda = \nu + 1$, we can conclude that
\begin{align*}
& \big\{ \abs{P_\rho} < (\textstyle \frac{1}{\nu+1} \displaystyle)^{2^\rho} \big\}_{(\rho)} \subset \big\{ \abs{P_{\rho-1}} < (\textstyle \frac{1}{\nu+1} \displaystyle)^{2^{\rho-1}} \big\}^{\langle \delta_{\rho-1}/2^{\rho-1} \rangle} \\
\Rightarrow & \big[\big\{ \abs{P_\rho} < (\textstyle \frac{1}{\nu+1} \displaystyle)^{2^\rho} \big\}_{(\rho)}\big]^{\langle \sum_{l=\rho}^{\mu-1} \delta_l/2^l \rangle} \subset \big[ \big\{ \abs{P_{\rho-1}} < (\textstyle \frac{1}{\nu+1} \displaystyle)^{2^{\rho-1}} \big\}^{\langle \delta_{\rho-1}/2^{\rho-1} \rangle} \big]^{\langle \sum_{l=\rho}^{\mu-1} \delta_l/2^l \rangle} \\
\Rightarrow & \big[\big\{ \abs{P_\rho} < (\textstyle \frac{1}{\nu+1} \displaystyle)^{2^\rho} \big\}^{\langle \sum_{l=\rho}^{\mu-1} \delta_l/2^l \rangle} \big]_{(\rho - \sum_{l=\rho}^{\mu-1} \delta_l/2^l)} \subset \big\{ \abs{P_{\rho-1}} < (\textstyle \frac{1}{\nu+1} \displaystyle)^{2^{\rho-1}} \big\}^{\langle \sum_{l=\rho-1}^{\mu-1} \delta_l/2^l \rangle} \\
\Rightarrow & \big[\big\{ \abs{P_\rho} < (\textstyle \frac{1}{\nu+1} \displaystyle)^{2^\rho} \big\}^{\langle \sum_{l=\rho}^{\mu-1} \delta_l/2^l \rangle} \big]_{(R)} \subset \big\{ \abs{P_{\rho-1}} < (\textstyle \frac{1}{\nu+1} \displaystyle)^{2^{\rho-1}} \big\}^{\langle \sum_{l=\rho-1}^{\mu-1} \delta_l/2^l \rangle}.
\end{align*}
This, together with $(\ref{equ_sublevelind+2})$, completes our argument by induction and proves Claim 1.$\Box$

Observe that, since $\{\delta_l\}$ is monotonically decreasing, we get from Claim 1 and $(B)$ the following property:
\begin{equation} \label{equ_sublevel+1} 
\big\{ \abs{P_\mu} < (\textstyle\frac{1}{\nu+1})^{2^\mu} \big\}_{(R)} \subset \subset \big\{ \abs{P_\nu} < (\textstyle\frac{1}{\nu+1})^{2^\nu} \big\}^{\langle \delta_\nu \rangle}.
\end{equation}
Fix now some $\nu \ge R$. We are going to show that 
\begin{equation} \label{equ_sublevel+2} 
\big\{ \abs{P_\nu} < (\textstyle\frac{1}{\nu+1}\displaystyle)^{2^\nu} \big\}^{\langle \delta_\nu \rangle}_{(R)} \subset \big\{ \abs{P_\nu} < (\textstyle \frac{1}{\nu} \displaystyle)^{2^\nu} \big\}. 
\end{equation}
Note that $(\ref{equ_sublevel+1})$ and $(\ref{equ_sublevel+2})$ together prove (1). By definition, we have
\[ \big\{ \abs{P_\nu} < (\textstyle\frac{1}{\nu+1}\displaystyle)^{2^\nu} \big\}^{\langle \delta_\nu \rangle}_{(R)} = \big\{ \abs{P_\nu} < (\textstyle\frac{1}{\nu+1}\displaystyle)^{2^\nu} \big\}_{(R)} \cup \bigcup_{x \in \partial \{\abs{P_\nu} < (\frac{1}{\nu+1})^{2^\nu}\}} B^n(x,\delta_\nu)_{(R)} \]
Obviously 
\[ \big\{ \abs{P_\nu} < (\textstyle\frac{1}{\nu+1})^{2^\nu} \big\}_{(R)} \subset \big\{ \abs{P_\nu} < (\textstyle \frac{1}{\nu} \displaystyle)^{2^\nu} \big\}. \]
Let $\zeta \in B^n(x, \delta_\nu)_{(R)}$ for some $x \in \partial \{\abs{P_\nu} < (\frac{1}{\nu+1})^{2^\nu}\}$. Then in particular $x \in \{\abs{P_\nu} = (\frac{1}{\nu+1})^{2^\nu}\}_{(\nu+1)}$. Assume, to get a contradiction, that $\zeta \in \{ \abs{P_\nu} \ge (\frac{1}{\nu})^{2^\nu} \}$. Since $x \in \{\abs{P_\nu} < (\frac{1}{\nu})^{2^\nu}\}_{(\nu+1)}$, we then can find $t \in (0,1]$ such that $\tilde{x} \coloneqq (1-t)x + t\zeta \in \{\abs{P_\nu} = (\frac{1}{\nu})^{2^\nu}\}$. Now obviously $\norm{\tilde{x} - x} < \delta_\nu$, which shows that $x \in \{\abs{P_\nu} = (\frac{1}{\nu+1})^{2^\nu}\}_{(\nu+1)} \cap \{\abs{P_\nu} = (\frac{1}{\nu})^{2^\nu}\}^{\langle \delta_\nu \rangle}$. In particular, we conclude that $\{\abs{P_\nu} < (\frac{1}{\nu+1})^{2^\nu}\}_{(\nu+1)} \cap \{\abs{P_\nu} = (\frac{1}{\nu})^{2^\nu}\}^{\langle \delta_\nu \rangle} \neq \varnothing$, which contradicts $(2_\nu)$. This proves that
\[ \bigcup_{x \in \partial \{\abs{P_\nu} < (\frac{1}{\nu+1})^{2^\nu}\}} B^n(x,\delta_\nu)_{(R)} \subset \big\{ \abs{P_\nu} < (\textstyle \frac{1}{\nu} \displaystyle)^{2^\nu} \big\},\]
and hence $(\ref{equ_sublevel+2})$. The proof of statement (1) of the lemma is now complete.

\noindent (ii) We now prove statement (2) of the lemma. For being able to do this we need the following

\noindent \textsc{Claim 2}. For $\mu > \nu \ge R$, one has 
%
\[ \big\{ \abs{P_\nu} < (\textstyle\frac{1}{\nu-1})^{2^\nu} \big\}^{\langle -\sum_{l=\nu}^{\mu-1} \delta_l / 2^l \rangle}_{(R)} \subset \big\{ \abs{P_\mu} < (\textstyle\frac{1}{\nu-1}\displaystyle)^{2^\mu} \big\}. \]
%
\textsc{Proof}. Let $\mu > \nu \ge R$ be fixed. For proving the statement of the claim, we use induction on $\rho$ to show that
\begin{equation} \label{equ_sublevelind-}
\big\{ \abs{P_\nu} < (\textstyle\frac{1}{\nu-1})^{2^\nu} \big\}^{\langle -\sum_{l=\nu}^{\rho-1} \delta_l / 2^l \rangle}_{(\nu + 1 - \sum_{l=\nu}^{\rho-1} \delta_l / 2^l)} \subset \big\{ \abs{P_\rho} < (\textstyle\frac{1}{\nu-1}\displaystyle)^{2^\rho} \big\}, \quad \text{for } \rho = \nu+1, \ldots, \mu.
\end{equation}
The case $\rho = \nu + 1$ follows immediately from $(3_{\nu+1}')$ with $\lambda = \nu - 1$. Suppose that property $(\ref{equ_sublevelind-})$ holds for some $\rho \in \N$ such that $\mu > \rho > \nu \ge R$. Then we also have
\begin{align*}
& \big[ \big\{ \abs{P_\nu} < (\textstyle\frac{1}{\nu-1}\displaystyle)^{2^\nu} \big\}^{\langle -\sum_{l=\nu}^{\rho-1} \delta_l / 2^l \rangle} \big]_{(\nu + 1 - \sum_{l=\nu}^{\rho-1} \delta_l / 2^l)} \subset \big\{ \abs{P_\rho} < (\textstyle\frac{1}{\nu-1}\displaystyle)^{2^\rho} \big\} \\
\Rightarrow & \Big[ \big[ \big\{ \abs{P_\nu} < (\textstyle\frac{1}{\nu-1}\displaystyle)^{2^\nu} \big\}^{\langle -\sum_{l=\nu}^{\rho-1} \delta_l / 2^l \rangle} \big]_{(\nu + 1 - \sum_{l=\nu}^{\rho-1} \delta_l / 2^l)} \Big]^{\langle -\delta_\rho/2^\rho \rangle} \!\!\!\!\! \subset \big\{ \abs{P_\rho} < (\textstyle\frac{1}{\nu-1}\displaystyle)^{2^\rho} \big\}^{\langle -\delta_\rho/2^\rho \rangle} \\
\Rightarrow & \big[ \big\{ \abs{P_\nu} < (\textstyle\frac{1}{\nu-1}\displaystyle)^{2^\nu} \big\}^{\langle -\sum_{l=\nu}^\rho \delta_l / 2^l \rangle} \big]_{(\nu + 1 - \sum_{l=\nu}^\rho \delta_l / 2^l)} \subset \big\{ \abs{P_\rho} < (\textstyle\frac{1}{\nu-1}\displaystyle)^{2^\rho} \big\}^{\langle -\delta_\rho/2^\rho \rangle} \\
\Rightarrow & \big[ \big\{ \abs{P_\nu} < (\textstyle\frac{1}{\nu-1}\displaystyle)^{2^\nu} \big\}^{\langle -\sum_{l=\nu}^\rho \delta_l / 2^l \rangle} \big]_{(\nu + 1 - \sum_{l=\nu}^\rho \delta_l / 2^l)} \subset \big\{ \abs{P_\rho} < (\textstyle\frac{1}{\nu-1}\displaystyle)^{2^\rho} \big\}^{\langle -\delta_\rho/2^\rho \rangle}_{(\nu+1)}
\end{align*}
while from $(3_{\rho+1}')$ with $\lambda = \nu - 1$, we get
\[ \big\{ \abs{P_\rho} < (\textstyle\frac{1}{\nu-1}\displaystyle)^{2^\rho} \big\}^{\langle -\delta_\rho/2^\rho \rangle}_{(\nu+1)} \subset \big\{ \abs{P_{\rho+1}} < (\textstyle\frac{1}{\nu-1}\displaystyle)^{2^{\rho+1}} \big\}. \]
This completes our argument by induction and, since $\nu + 1 - \sum_{l=\nu}^{\mu-1} \delta_l/2^l > R$, proves Claim 2. $\Box$

Observe that, since $\{\delta_l\}$ is monotonically decreasing, we get from Claim 2 and $(B)$ the following property:
\begin{equation} \label{equ_sublevel-1} \big\{ \abs{P_\nu} < (\textstyle\frac{1}{\nu-1}\displaystyle)^{2^\nu} \big\}^{\langle -\delta_\nu \rangle}_{(R)} \subset \subset \big\{ \abs{P_\mu} < (\textstyle\frac{1}{\nu-1}\displaystyle)^{2^\mu} \big\}. \end{equation}
Fix now some $\nu \ge R$. We are going to show that
\begin{equation} \label{equ_sublevel-2} \big\{ \abs{P_\nu} < (\textstyle\frac{1}{\nu}\displaystyle)^{2^\nu} \big\}_{(R)} \subset \big\{ \abs{P_\nu} < (\textstyle\frac{1}{\nu-1}\displaystyle)^{2^\nu} \big\}^{\langle -\delta_\nu \rangle}_{(R)}. \end{equation}
Note that $(\ref{equ_sublevel-1})$ and $(\ref{equ_sublevel-2})$ together prove (2). By definition, we have
\[ \big\{ \abs{P_\nu} < (\textstyle\frac{1}{\nu-1}\displaystyle)^{2^\nu} \big\}^{\langle -\delta_\nu \rangle}_{(R)} = \big\{ \abs{P_\nu} < (\textstyle\frac{1}{\nu-1}\displaystyle)^{2^\nu} \big\}_{(R)} \setminus \bigcup_{x \in \partial \{\abs{P_\nu} < (\frac{1}{\nu-1})^{2^\nu}\}} B^n(x,\delta_\nu)_{(R)}. \]
Obviously
\[ \big\{ \abs{P_\nu} < (\textstyle\frac{1}{\nu}\displaystyle)^{2^\nu} \big\}_{(R)} \subset \big\{ \abs{P_\nu} < (\textstyle\frac{1}{\nu-1}\displaystyle)^{2^\nu} \big\}_{(R)}. \]
Let $\zeta \in B^n(x,\delta_\nu)_{(R)}$ for some $x \in \partial \{\abs{P_\nu} < (\frac{1}{\nu-1})^{2^\nu}\}$. Then in particular $x \in \{\abs{P_\nu} = (\frac{1}{\nu-1})^{2^\nu}\}_{(\nu+1)}$. In order to get a contradiction, assume that $\zeta \in \{\abs{P_\nu} < (\frac{1}{\nu})^{2^\nu}\}_{(R)}$. Since $x \in \{ \abs{P_\nu} > (\frac{1}{\nu})^{2^\nu}\}$, we then can find $t \in (0,1)$ such that $\tilde{x} \coloneqq (1-t)x + t\zeta \in \{ \abs{P_\nu} = (\frac{1}{\nu})^{2^\nu}\}$. Now obviously $\norm{\tilde{x} - x} < \delta_\nu$, which shows that $x \in \{\abs{P_\nu} = (\frac{1}{\nu-1})^{2^\nu}\}_{(\nu+1)} \cap \{\abs{P_\nu} = (\frac{1}{\nu})^{2^\nu}\}^{\langle \delta_\nu \rangle}$. In particular, we conclude that $\{\abs{P_\nu} > (\frac{1}{\nu-1})^{2^\nu}\}_{(\nu+1)} \cap \{\abs{P_\nu} = (\frac{1}{\nu})^{2^\nu}\}^{\langle \delta_\nu \rangle} \neq \varnothing$, which contradicts $(2_\nu)$. This proves that
\[ \big\{ \abs{P_\nu} < (\textstyle\frac{1}{\nu}\displaystyle)^{2^\nu} \big\}_{(R)} \cap \bigcup_{x \in \partial \{\abs{P_\nu} < (\frac{1}{\nu-1})^{2^\nu}\}} B^n(x,\delta_\nu)_{(R)} = \varnothing,\]
and hence $(\ref{equ_sublevel-2})$. The proof of statement (2) of the lemma is now complete.

Finally, we show that the representation $(\ref{equ_sublevelE})$ holds true. Let $(z,w) \in \C^n$ and choose $R > 0$ such that $(z,w) \in B^n(0,R)$. Assume that $(z,w) \in \mathcal{E}$. Let $\mu \ge R$. Applying $(1)$, we get
\[ (E_{\mu+l})_{(R)} \subset \big\{ \abs{P_{\mu+l}} < (\textstyle\frac{1}{\mu+l})^{2^{\mu+l}} \big\}_{(R)} \subset \subset \big\{ \abs{P_\mu} < (\textstyle\frac{1}{\mu})^{2^\mu} \big\} \]
for all $l \in \N$. But since $(1_\rho)$ holds true for all $\rho \in \N$, we can apply Lemma \ref{thm_defE} to see that $\mathcal{E}_{(R)} = \lim_{l \to \infty} (E_{\mu+l})_{(R)}$ in the Hausdorff metric. Hence $\mathcal{E}_{(R)} \subset \{ \abs{P_{\mu+1}} \le (\textstyle\frac{1}{\mu+1})^{2^{\mu+1}} \}_{(R)} \subset \{ \abs{P_\mu} < (\textstyle\frac{1}{\mu})^{2^\mu} \}$. Since this holds true for all $\mu \ge R$, it follows $(z,w) \in \bigcap_{\nu \in \N} \bigcup_{\mu \ge \nu} \{ \abs{P_\mu} < (\frac{1}{\mu})^{2^\mu} \}$. Conversely, assume that $(z,w) \notin \mathcal{E}$. Then by Lemma \ref{thm_convPn}, the sequence $\{\abs{P_\nu(z,w)}^{1/2^\nu}\}$ is converging to a positive real number; hence there exist $\delta > 0$ and $\mu_0 \in \N$ such that $\abs{P_\mu(z,w)}^{1/2^\mu} > \delta$ for all $\mu \ge \mu_0$. In particular, $(z,w) \notin \{ \abs{P_\mu} < (\frac{1}{\mu})^{2^\mu} \}$ for $\mu \ge \max \{\mu_0, 1/\delta \}$, which shows that $(z,w) \notin \bigcap_{\nu \in \N} \bigcup_{\mu \ge \nu} \{ \abs{P_\mu} < (\frac{1}{\mu})^{2^\mu} \}$.
\end{proof}
%
%
%

%
%
%
%
%
%
\section{Proofs of the theorems. Open questions}
We now fix the sequence $\{\varepsilon_l\}$ once and for all to be converging to zero so fast that the conclusions of Lemma \ref{thm_analyticE} and \ref{thm_sublevelE} hold true and that $\varepsilon_l \sqrt{\abs{z_{[l]} - a_l}} < 1/2^l$ on \nolinebreak $B^{n-1}(0,l)$. 

\noindent For each $\nu \in \N$, define a function $\varphi_\nu \colon \C^n \to [-\infty, +\infty)$ as 
\[ \varphi_\nu(z,w) \coloneqq \frac{1}{2^\nu} \log \abs{P_\nu(z,w)}. \]
Then $\varphi_\nu$ is a plurisubharmonic function in $\C^n$, pluriharmonic in $\C^n \setminus E_\nu$, and $\varphi_\nu(z,w) = -\infty$ if and only if $(z,w) \in E_\nu$. 

\begin{lemma} \label{thm_convphi}
The sequence $\{\varphi_\nu\}$ converges uniformly on compact subsets of $\C^n \setminus \mathcal{E}$ to a pluriharmonic function  $\varphi \colon \C^n \setminus \mathcal{E} \to \R$, and $\lim_{(z,w) \to (z_0,w_0)} \varphi(z,w) = -\infty$ for every $(z_0,w_0) \in \mathcal{E}$. In particular, $\varphi$ has an unique extension to a plurisubharmonic function on $\C^n$.
\end{lemma}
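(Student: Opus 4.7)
The plan is to handle the three claims in order: uniform convergence of $\{\varphi_\nu\}$ to a pluriharmonic limit off $\mathcal{E}$, the boundary behaviour $\varphi \to -\infty$ at $\mathcal{E}$, and the unique plurisubharmonic extension to $\C^n$. All three follow from the preceding two lemmas: Lemma~\ref{thm_convPn} controls $|P_\nu|^{1/2^\nu}$ on the complement of $\mathcal{E}$, while Lemma~\ref{thm_sublevelE} gives the sublevel description of $\mathcal{E}$ together with the nested inclusions that control $|P_\nu|$ near $\mathcal{E}$.

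For the first claim I observe that $\varphi_\nu = \log|P_\nu|^{1/2^\nu}$, so Lemma~\ref{thm_convPn} and the uniform continuity of $\log$ on compact subsets of $(0,\infty)$ give uniform convergence of $\{\varphi_\nu\}$ on compact subsets of $\C^n \setminus \mathcal{E}$ to a continuous function $\varphi$. By Lemma~\ref{thm_defE} any compact $K \subset \C^n \setminus \mathcal{E}$ is disjoint from $E_\nu$ for all sufficiently large $\nu$, so on a neighbourhood of $K$ the $\varphi_\nu$ are eventually pluriharmonic and $\varphi$ is pluriharmonic on $\C^n \setminus \mathcal{E}$ as a uniform limit of pluriharmonic functions.

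The boundary behaviour is the technical heart. Fix $(z_0, w_0) \in \mathcal{E}$ and $R > 0$ with $(z_0, w_0) \in B^n(0, R)$. The end of the proof of Lemma~\ref{thm_sublevelE} shows $\mathcal{E} \cap \Bbarn(0, R) \subset \{|P_\nu| < (1/\nu)^{2^\nu}\}$ for every $\nu \geq R$. Given $M > 0$ I pick $\nu \geq R$ with $\log(\nu - 1) > M$; part~2 of Lemma~\ref{thm_sublevelE} yields $\{|P_\nu| < (1/\nu)^{2^\nu}\} \cap \Bbarn(0, R) \subset\subset \{|P_\mu| < (1/(\nu - 1))^{2^\mu}\}$ for every $\mu \geq \nu$. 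Consequently, on some open neighbourhood $V \subset \{|P_\nu| < (1/\nu)^{2^\nu}\} \cap B^n(0, R)$ of $(z_0, w_0)$ one has $\varphi_\mu < -\log(\nu - 1)$ for every $\mu \geq \nu$, and passing to the limit $\mu \to \infty$ on $V \setminus \mathcal{E}$ yields $\varphi \leq -\log(\nu - 1) < -M$ there.

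For the extension I set $\tilde\varphi \coloneqq \varphi$ on $\C^n \setminus \mathcal{E}$ and $\tilde\varphi \coloneqq -\infty$ on $\mathcal{E}$; by the previous paragraph $\tilde\varphi$ is upper semicontinuous. A direct estimate using $\varepsilon_l \sqrt{|z_{[l]} - a_l|} < 1/2^l$ bounds the $w_j^{(\nu)}(z)$ uniformly on compacts, so $\{\varphi_\nu\}$ is locally uniformly bounded above on $\C^n$; the argument of the previous paragraph with $\mu = \nu$ gives $\varphi_\nu(z_0, w_0) < -\log \nu \to -\infty$ for every $(z_0, w_0) \in \mathcal{E}$, so $\limsup_\nu \varphi_\nu = \tilde\varphi$ pointwise on $\C^n$. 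Since the upper semicontinuous regularization of the $\limsup$ of a locally uniformly bounded-above sequence of plurisubharmonic functions is plurisubharmonic, and $\tilde\varphi$ is already upper semicontinuous, $\tilde\varphi$ itself is plurisubharmonic. For uniqueness, Fubini combined with Lemma~\ref{thm_defE}~3 gives $\mathcal{E}$ zero $2n$-dimensional Lebesgue measure, so any psh extension $\tilde\varphi'$ satisfies the ball sub-mean value inequality $\tilde\varphi'(p) \leq V(B(p,r))^{-1} \int_{B(p,r) \setminus \mathcal{E}} \varphi \, dV$ for small $r > 0$; by the boundary behaviour the right-hand side tends to $-\infty$ as $r \to 0$, forcing $\tilde\varphi'(p) = -\infty = \tilde\varphi(p)$ at every $p \in \mathcal{E}$. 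I expect the main obstacle to be the boundary behaviour step: the two-sided nesting of Lemma~\ref{thm_sublevelE}~(2) is exactly what lifts information about $|P_\nu|$ at the single point $(z_0, w_0)$ to a uniform bound on $\varphi$ in a whole neighbourhood.
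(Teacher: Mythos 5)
Your proof is correct and follows essentially the same route as the paper: uniform convergence off $\mathcal{E}$ via Lemma \ref{thm_convPn}, and the blow-down at $\mathcal{E}$ via the inclusion $\mathcal{E}\cap\Bbarn(0,R)\subset\{\abs{P_\nu}<(1/\nu)^{2^\nu}\}$ combined with part 2 of Lemma \ref{thm_sublevelE}. The only difference is that you spell out the existence and uniqueness of the plurisubharmonic extension (via the regularized $\limsup$ of the $\varphi_\nu$ and the sub-mean-value inequality over the measure-zero set $\mathcal{E}$), which the paper leaves as an unproved ``in particular''; your details there are sound.
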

\begin{proof}
Applying Lemma $\ref{thm_convPn}$, we immediately see that $\{\varphi_\nu\}$ converges uniformly on compact subsets of $\C^n \setminus \mathcal{E}$. In particular, $\varphi$ is pluriharmonic in $\C^n \setminus \mathcal{E}$. Let $(z_0,w_0) \in \mathcal{E}$ and let $\{(z_j,w_j)\}_{j \ge 1}$ be an arbitrary sequence of points converging to $(z_0, w_0)$. Let $R \in \N$ be  such that $(z_0,w_0) \in B^n(0,R)$. From part $(1)$ of Lemma $\ref{thm_sublevelE}$ we know that 
\[ \{ \abs{P_{\mu + 1}} < (\textstyle \frac{1}{\mu+1} \displaystyle )^{2^{\mu+1}} \} \cap \Bbarn(0,R) \subset \{ \abs{P_\mu} < (\textstyle \frac{1}{\mu} \displaystyle )^{2^\mu} \} \]
for every $\mu \ge R$; thus it follows from
\[ \mathcal{E} = \bigcap_{\nu \in \N}\bigcup_{\mu \ge \nu} \big\{ \abs{P_\mu} < (\textstyle\frac{1}{\mu})^{2^\mu} \big\} \]
that $\mathcal{E} \cap \Bbarn(0,R) \subset \{ \abs{P_\nu} < (\frac{1}{\nu})^{2^\nu} \}$ for all $\nu \ge R$. Hence for every $\nu \ge R$ there exists $j(\nu) \in \N$ such that $(z_j,w_j) \in \{ \abs{P_\nu} < (\frac{1}{\nu})^{2^\nu} \} \cap B^n(0,R)$ for all $j \ge j(\nu)$. But whenever $(z_j,w_j) \in \{ \abs{P_\nu} < (\frac{1}{\nu})^{2^\nu} \} \cap \Bbarn(0,R)$ we know from part $(2)$ of Lemma $\ref{thm_sublevelE}$ that also $(z_j,w_j) \in \{ \abs{P_\mu} < (\frac{1}{\nu-1})^{2^\mu} \}$ for each $\mu \ge \nu$. This means that $\varphi_\mu(z_j,w_j) < - \log(\nu-1)$ for each $\mu \ge \nu$. Hence $\varphi(z_j, w_j) \le - \log(\nu-1)$ for each $j \ge j(\nu)$. This shows that $\lim_{j \to \infty} \varphi(z_j, w_j) = -\infty$. 
\end{proof}

\noindent {\bf Proof of Theorem 1.} By construction we have $\mathcal{E} =  \{z \in \C^n : \varphi(z) = - \infty\}$, and $\varphi$ is pluriharmonic in $\C^n \setminus \mathcal{E}$ by Lemma \ref{thm_convphi}. Using the representation $(\ref{equ_sublevelE})$ of $\mathcal{E}$ by sublevel sets of the polynomials $P_\nu$, we get
\[ \C^n \setminus \mathcal{E} = \bigcup_{\nu \in \N} \bigcap_{\mu \ge \nu} \{ \varphi_\mu \ge - \log \mu \}; \]
hence $\C^n \setminus \mathcal{E}$ is pseudoconvex. It only remains to show that $\widehat{\partial B^n(0,R) \cap \mathcal{E}} = \Bbarn(0,R) \cap \mathcal{E}$. Using $(\ref{equ_sublevelE})$ and part (1) of Lemma $\ref{thm_sublevelE}$, we see that for every $(z,w) \in \C^n \setminus \mathcal{E}$ there exists $\nu \in \N$ such that $\Bbarn(0,R) \cap \mathcal{E} \subset \{\abs{P_\nu} < (\frac{1}{\nu})^{2^\nu}\}$ but $\abs{P_\nu(z,w)} \ge (\frac{1}{\nu})^{2^\nu}$, i.e., $(z,w) \notin \widehat{\partial B^n(0,R) \cap \mathcal{E}}$. However, since clearly $\widehat{\partial B^n(0,R) \cap \mathcal{E}} \subset \Bbarn(0,R)$, this shows that $\widehat{\partial B^n(0,R) \cap \mathcal{E}} \subset \Bbarn(0,R) \cap \mathcal{E}$. Concerning the other direction, note that $\widehat{\partial B^n(0,R) \cap E_\nu} = \Bbarn(0,R) \cap E_\nu$ for every $\nu \in \N$ by the maximum modulus principle and the fact that $E_\nu$ is the zero set of the polynomial $P_\nu$. Since on bounded subsets of $\C^n$ the sequence $\{E_\nu\}$ converges to $\mathcal{E}$ in the Hausdorff metric, we thus conclude that $\Bbarn(0,R) \cap \mathcal{E} = \lim_{\nu \to \infty} \Bbarn(0,R) \cap E_\nu = \lim_{\nu \to \infty} \widehat{\partial B^n(0,R)\cap E_\nu} \subset \widehat{\partial B^n(0,R) \cap \mathcal{E}}$. \hfill $\Box$ \bigskip

\noindent {\bf Proof of Theorem 2.} For each $C_1\in\R$, we define $\Omega_{C_1}\subset \C^n$ to be the domain
\[ \Omega_{C_1} \coloneqq \big\{ (z,w) \in \C^n : \varphi(z,w) + \big( \norm{z}^2 + \abs{w}^2 \big) < C_1 \big\}, \]
where $\varphi(z,w)$ is the function constructed in Lemma \ref{thm_convphi}. It follows from the plurisubharmonicity of $\varphi$ on $\C^n$ that $\Omega_{C_1}$ is strictly pseudoconvex. Obviously one also has that $\mathcal{E} = \{\varphi = - \infty\} \subset \Omega_{C_1}$. Further, by Sard's theorem, we can choose a constant $C_1$ such that $\Omega_{C_1}$ has $C^\infty$-smooth boundary. We fix such a constant $C_1$ and define $\Omega$ to be the domain $\Omega_{C_1}$. By construction, $\mathcal{E}$ contains no analytic variety of positive dimension. Using the representation $(\ref{equ_sublevelE})$ of $\mathcal{E}$ by sublevel sets of the polynomials $P_\nu$, we get
\[ \Omega \setminus \mathcal{E} = \bigcup_{\nu \in \N} \bigcap_{\mu \ge \nu} \Big( \Omega \cap \{ \varphi_\mu \ge -\log\mu \} \Big). \]
In particular, $\Omega \setminus \mathcal{E}$ is pseudoconvex, and hence the projection $\pi_n\big(E(\partial\Omega)\big)$ of the envelope of holomorphy $E(\partial \Omega)$ of $\partial\Omega$ onto $\C^n$ is contained in $ \overline{\Omega}\setminus \mathcal{E}$.

It remains to show that $E(\partial\Omega)$ is single-sheeted and coincides with $\overline\Omega \setminus \mathcal{E}$. Observe that for every $a \in \R$ the set $\overline\Omega \cap \{ \varphi \ge a \}$ is compact and hence, since $\varphi_\nu \to \varphi$ uniformly on compact subsets of $\C^n \setminus \mathcal{E}$, for each $a \in \R$ we can choose a natural number $N(a) \in \N$ such that
\[ \Omega \cap \{\varphi > a\} \subset \Omega \cap \{\varphi_{N(a)} > a - 1 \} = \Omega \cap \big\{\abs{P_{N(a)}} > e^{2^{N(a)}(a-1)}\big\} \subset \Omega \cap \{\varphi > a-2\}.\]
Fix some $a \in \R$ and let $N \coloneqq N(a)$. Observe that $P_N$, being a polynomial, has only finitely many singular values $c_1, c_2, \ldots, c_k$ and let $S \coloneqq \bigcup_{j=1}^k \{P_N = c_j\}$ (indeed, using the explicit formula for $P_N$ stated after Lemma \ref{thm_defPn}, one can even see that $k = 1$ and $c_1 = 0$). Let now $f \in CR(\partial\Omega)$. Since $\Omega$ is strictly pseudoconvex, $f$ extends to a holomorphic function on some one-sided neighbourhood $U \subset \overline\Omega$ of $\partial\Omega$, which will be denoted by $f$ as well. 

Let $H \subset \C^n$ denote a complex two-dimensional affine subspace of $\C^n$ which is obtained by fixing $n-2$ of the coordinates $z_1, z_2, \ldots, z_{n-1}, w$ (for $n=2$ the only possible choice is $H = \C^2$).
Then $\Omega \cap H = \bigcup_\alpha \Gamma_\alpha$ is the disjoint union of a family $\{\Gamma_\alpha\}$ of strictly pseudoconvex domains $\Gamma_\alpha \subset H \cong \C^2$, and $\partial_H \Gamma_\alpha \subset \partial \Omega \cap H$ for each $\alpha$, where $\partial_H \Gamma_\alpha$ denotes the boundary of $\Gamma_\alpha$ with respect to the relative topology on $H$. In particular, we can view each $\Gamma_\alpha$ as a strictly pseudoconvex domain in $\C^2$, and for each $\alpha$ the restriction of $f$ to $U \cap H$ defines a holomorphic function in a one-sided neighbourhood of $\partial_H \Gamma_\alpha$. With the situation reduced to a two-dimensional case, we can now argue as in the example from introduction and conclude from Theorem A in $[J]$ that $E(\partial_H \Gamma_\alpha)$ is single-sheeted (of course here $E(\partial_H \Gamma_\alpha)$ denotes the envelope of holomorphy of $\partial_H\Gamma_\alpha$ with respect to functions holomorphic in $H \cong \C^2$). On the other hand, since for each $\nu \in \N$ the restriction $P_\nu|_H$ is again a polynomial and we can assume it to be nonconstant (for $\nu \ge \nu_0$ large enough this clearly is satisfied), for each $a' \in \R$ the sets $\{P_{N(a')}|_H = c\}$ with $c \in \C$, $\abs{c} > e^{2^{N(a')}(a'-1)}$, constitute a continuous family of analytic curves in $H \cong \C^2$, that fills $(\Omega \cap H) \cap \{\varphi > a'\}$. Using the Kontinuit\"atssatz, we thus conclude that $E(\partial_H\Gamma_\alpha) = \overline{\Gamma}_\alpha \cap \{\varphi > -\infty\} = \overline{\Gamma}_\alpha \setminus \mathcal{E}$ for each $\alpha$. Hence, since the domains $\Gamma_\alpha$ are disjoint and pseudoconvex, we get that $E(\bigcup_\alpha \partial_H\Gamma_\alpha)$ is single-sheeted and $(\Omega \cap H) \setminus \mathcal{E} \subset E(\bigcup_\alpha \partial_H\Gamma_\alpha)$. In particular, $f|_{U \cap H}$ extends to a holomorphic function
\[ f_H \colon (\Omega \setminus \mathcal{E}) \cap H \to \C, \qquad f_H = f \text{ near } \partial \Omega.\]
Observe that this already proves our claim in the case $n = 2$.

Assume now that $n \ge 3$. For each $c \in \C \setminus \{c_1, c_2, \ldots, c_k\}$ the hypersurface $\{P_N = c\}$ is a Stein manifold of dimension at least 2, and if $\abs{c} > e^{{2^N}(a-1)}$, then each connected component of $\,\Omega_c \coloneqq \Omega \cap \{P_N = c\}$ is a bounded strictly pseudoconvex domain in $\{P_N = c\}$. Further, $f$ restricts to a holomorphic function on $\Omega_c \setminus K$, where $K \subset \Omega_c$ is a compact set of the form $K = \Omega_c \setminus \tilde{U}$ for some one-sided neighbourhood $\tilde{U} \subset U$ of $\partial\Omega$. Since each connected component $\Gamma$ of $\Omega_c$ is bounded and strictly pseudoconvex, the boundary of $\Gamma$ in $\{P_N = c\}$ is connected, and hence we can assume $\Gamma \setminus K = \Gamma \cap \tilde{U}$ to be connected. Thus we can apply Hartogs theorem on removability of compact singularities to extend $f|_{\Omega_c \setminus K}$ to a holomorphic function $\tilde{f}_c$ on $\Omega_c$ (for a version of the classical Hartogs theorem in the setting of Stein manifolds see Corollary 4.2 in [AH]). In this way we can define a function
\[ f_a \colon \big[ \Omega \cap \{P_N > e^{2^N(a-1)}\} \big] \setminus S \to \C, \qquad f_a = f \text{ near } \partial \Omega, \]
by letting $f_a(z,w) = \tilde{f}_c(z,w)$ if $P_N(z,w) = c$. We claim that for every two-dimensional subspace $H \subset \C^n$ described above, the functions $f_a$ and $f_H$ coincide on their common domain of definition, namely, on the set $\big[\Omega \cap H \cap \{P_N > e^{2^N(a-1)}\}\big] \setminus S$. Indeed, let $c \in \C \setminus \{c_1, c_2, \ldots, c_k\}$, $\abs{c} > e^{2^N(a-1)}$. Since the restriction $P_N|_H$ is again a (nonconstant) polynomial, the set $\gamma_c \coloneqq \Omega \cap H \cap \{P_N = c\}$ is an analytic curve in $\Omega \cap H \cap \{P_N > e^{2^N(a-1)}\}$. Observe that the boundary of $\gamma_c$ is contained in $\partial \Omega$ and recall that $f_a$ and $f_H$ are holomorphic on $\gamma_c$ and coincide near $\partial \Omega$. Thus it follows from the uniqueness theorem that $f_a = f_H$ on $\gamma_c$. Hence, since $c \in \C \setminus \{c_1, c_2, \ldots, c_k\}$ with $\abs{c} > e^{2^N(a-1)}$ was arbitrary, we conclude that 
\begin{equation} \label{equ_fafH}
f_a = f_H \quad\text{on} \quad \big[\Omega \cap H \cap \{P_N > e^{2^N(a-1)}\}\big] \setminus S.
\end{equation}
In particular, this shows that $f_a$ is holomorphic in each variable separately (recall the definition of $H$). Thus by Hartogs separate analyticity theorem, $f_a$ is a holomorphic function on $\big[\Omega \cap \{P_N > e^{2^N(a-1)}\}\big] \setminus S$. Moreover, we see from $(\ref{equ_fafH})$ and the holomorphicity of $f_H$ on $(\Omega \cap H) \setminus \mathcal{E} \supset \Omega \cap H \cap \{ P_N > e^{2^N(a-1)}\}$ that $f_a$ remains bounded near $S$. It follows then from Riemann's removable singularities theorem that $f_a$ extends to a holomorphic function $\tilde{f}_a$ on $\Omega \cap \{ P_N > e^{2^N(a-1)}\} \supset \Omega \cap \{\varphi > a\}$. Since $a \in \R$ was arbitrary, and since $\Omega \setminus \mathcal{E} = \bigcup_{a\in \R} \Omega \cap \{\varphi > a\}$, we conclude that $f$ has a single-valued holomorphic extension to $\overline{\Omega} \setminus \mathcal{E}$. Hence $E(\partial\Omega)$ is single-sheeted and $E(\partial\Omega) = \overline{\Omega} \setminus \mathcal{E}$.

Now we can construct a smooth $CR$ function $f$ on $\partial\Omega$ which extends inside $\Omega$ exactly to $\overline{\Omega} \setminus \mathcal{E}$. In order to do so, let 
\[ \widetilde\Omega \coloneqq \big\{ (z,w) \in \C^n : \varphi(z,w) + \big( \norm{z}^2 + \abs{w}^2 \big) < C_2 \big\}, \]
where the constant $C_2>C_1$. Then the domain $\widetilde\Omega$ is also pseudoconvex and $\overline\Omega\subset\widetilde\Omega$. As before, we see that $\widetilde\Omega \setminus \mathcal{E}$ is pseudoconvex; hence there exists a holomorphic function $f \in \mathcal{O}(\widetilde\Omega\setminus \mathcal{E})$ which does not extend to $\mathcal{E}$. Then $f\vert_{\partial\Omega}$ is a function as required. \hfill $\Box$ \\[0.5cm]

Finally we state some open questions related to the content of the paper (and also related to each other).\bigskip

{\bf Question 1.} Let $\Omega\subset\C^n$, $n\ge 2$, be an unbounded strictly pseudoconvex domain. For each $R>0$, consider the hull
$\widehat{\partial\Omega\cap\Bbarn(0,R)}^{\raisebox{-1ex}{\scriptsize$A(\Omega)$}}$ of the set $\partial\Omega\,\cap\,\Bbarn(0,R)$ with respect to the algebra $\mathcal A(\Omega)$ of the functions holomorphic in $\Omega$, which are continuous up to the boundary $\partial \Omega$. Is it true that $\bigcup_{R>0}\widehat{\partial\Omega \cap \Bbarn(0,R)}^{\raisebox{-1ex}{\scriptsize$A(\Omega)$}}=\overline\Omega$?\bigskip

{\bf Question 2.} Is it true that there exist a properly embedded into $\C^n$, $n\ge 2$, smooth Levi-flat hypersurface $\mathcal M$  and an unbounded strictly pseudoconvex domain $\Omega\subset\C^n$ such that $\mathcal M\subset\Omega$?\bigskip

{\bf Question 3.} Let $\Omega \subset \C^n$, $n \ge 2$, be an unbounded strictly pseudoconvex domain. Does it follow that its boundary $\partial\Omega$ is connected?
\vspace{1truecm}

\noindent \textbf{Remark.} After submitting this paper to arXiv the authors were informed by M. Brunella that the answers to Questions 1 and 3 are negative and to Question 2 is positive.

To show this, M. Brunella suggests to consider a domain $W \subset \C^2_{z,w}$ biholomorphic to $\C^2_{z,w}$ such that $\{z \in \C_z : (z,0) \in W\} = \bigcup_{k=1}^N D_k$ and $\{z \in \C_z : (z,0) \in \overline{W}\} = \bigcup_{k=1}^N \overline{D}_k$, where $D_1, D_2, \ldots, D_N$ are bounded domains in $\C_z$ with $C^1$-smooth boundaries such that $\overline{D}_1, \overline{D}_2, \ldots, \overline{D}_N$ are pairwise disjoint. The existence of such a domain is granted by Corollary 1.1 in [G]. Let now $\delta > 0$ be so small that the $\delta$-neighbourhoods $U_k^\delta$ of $D_k$ in $\C_z$, $k = 1, 2, \ldots, N$, are still pairwise disjoint, and for each $k = 1, 2, \ldots, N$ consider a strictly subharmonic function $\varphi_k^\delta \in C^\infty(\overline{U_k^\delta})$ such that $\partial_{W} \big[W \cap \{(z,w) \in \C^2 : z \in U_k^\delta, \abs{w} \le e^{\varphi_k^\delta(z)} \}\big] \subset \{(z,w) \in \C^2 : z \in U_k^\delta, \abs{w} = e^{\varphi_k^\delta(z)} \}$, where $\partial_{W}$ denotes the boundary operator in the relative topology of $W$, and such that the set $\{(z,w) \in \C^2 : z \in \partial U_k^\delta, \abs{w} = e^{\varphi_k^\delta(z)} \}$ is disjoint from $\overline{W}$. Observe that these conditions are satisfied for $\delta$ small enough and $\varphi_k^\delta$ close enough to $-\infty$ due to the above property of $\overline{W}$. Fix now such $\delta$ and $\varphi_k^\delta$, $k = 1, 2, \ldots, N$, and consider an unbounded connected component $\widetilde{W}$ of the set $W \setminus \bigcup_{k=1}^N \{(z,w) \in \C^2 : z \in U_k^\delta, \abs{w} \le e^{\varphi_k^\delta(z)}\}$. Then by construction, $\widetilde{W}$ is strictly pseudoconvex along $\partial \widetilde{W} \setminus \partial W$ and, moreover, $\partial \widetilde{W} \setminus \partial W$ has at least $N$ different connected components. Let $F$ be a biholomorphic map of $W$ to $\C^2$ and define the domain $\Omega$ by $\Omega \coloneqq F(\widetilde{W})$. Then, since $\partial \Omega = F(\partial \widetilde{W} \setminus \partial W)$, $\Omega$ is an unbounded strictly pseudoconvex domain in $\C^2$ with at least $N$ boundary components, which gives a negative answer to Question 3. This domain contains a properly embedded into $\C^2$ Levi-flat hypersurface, namely, the surface $F\big((\partial D \times \C_w) \cap W\big)$, where $D$ is any open disc in $\pi_z(W) \setminus \bigcup_{k=1}^N U_k^\delta$. This gives a positive answer to Question 2. Finally let $\Omega' \subset \C^2$ be a strictly pseudoconvex domain such that $\overline{\Omega} \subset \Omega'$ and $F^{-1}(\Omega') \cap \{w=0\} = \varnothing$ (such a domain $\Omega'$ can be obtained, for example, by repeating the construction of $\Omega$ with $\varphi_k^\delta$ replaced by $\varphi_k^\delta - 1$, $k = 1, 2, \ldots, N$). Then $\phi \coloneqq (\log \abs{w}) \circ F^{-1}$ is a continuous plurisubharmonic function on $\Omega'$, hence $\Omega_c' \coloneqq \{(z,w) \in \Omega': \phi(z,w) < c\}$ is Runge in $\Omega'$ for every $c \in \R$ (see Corollary 1 of \S 4 in [N]). Moreover, by construction of $\Omega$, for suitably chosen $c$ the set $\Omega_c'$ is a neighbourhood of $\partial \Omega$ and $\overline{\Omega} \setminus \Omega_c' \neq \varnothing$. After fixing such $c$ we then conclude that $\bigcup_{R>0} \widehat{\partial \Omega \cap \Bbarn(0,R)}^{\raisebox{-1ex}{\scriptsize$A(\Omega)$}} \subset \bigcup_{R>0} \widehat{\partial \Omega \cap \Bbarn(0,R)}^{\raisebox{-1ex}{\scriptsize$\mathcal{O}(\Omega')$}} \cap \overline{\Omega} \subset \Omega_c' \cap \overline{\Omega} \subsetneq \overline{\Omega}$ which gives a negative answer to Question 1.

%
 \vspace{1truecm}

%
%
%
  {\sc T. Harz: Department of Mathematics, University of Wuppertal --- 42119 Wuppertal, Germany}
  
  {\em e-mail address}: {\texttt harz@math.uni-wuppertal.de}\vspace{0.3cm}
  
  {\sc N. Shcherbina: Department of Mathematics, University of Wuppertal --- 42119 Wuppertal, Germany}
  
  {\em e-mail address}: {\texttt shcherbina@math.uni-wuppertal.de}\vspace{0.3cm}
  
  {\sc G. Tomassini: Scuola Normale Superiore, Piazza dei Cavalieri, 7 --- 56126 Pisa, Italy}
  
  {\em e-mail address}: {\texttt g.tomassini@sns.it}
\end{document}